\newcommand{\br}{\mathbb R}
\newcommand{\bn}{\mathbb N}
\def\e{{\varepsilon}}
\def\Om{\Omega}
\def\om{\omega}
\def\aal{\alpha}
\def\br{\mathbb R}
\def\bn{\mathbb N}
\def\tow{\rightharpoonup}
\newtheorem{remark}{Remark}[section]
\newcommand{\toH}{\twoheadrightarrow}
\title{Asymptotic analysis of a micropolar fluid flow in thin domain with
a free and  rough  boundary}
\author{{Mahdi Boukrouche  \and Laetitia Paoli}
\thanks{PRES Lyon  University,  UJM F-42023 Saint-Etienne, CNRS UMR 5208
 Institut Camille Jordan, 23 Docteur Paul
Michelon 42023 Saint-Etienne Cedex 2, France.
         Fax: +33 4 77 48 51 53, Phone +33 4 77 48 15 00,
Mahdi.Boukrouche@univ-st-etienne.fr,
 laetitia.paoli@univ-st-etienne.fr}  }
\begin{document}

\maketitle

\begin{abstract}
 Motivated by lubrication problems, we consider a micropolar
fluid flow in a 2D domain with a rough and free boundary.
 We assume that the thickness and the roughness are both
of order $0<\e <<1$. We prove the existence and uniqueness
of a solution of this problem for any value of $\e$ and
 we establish some a priori estimates. Then we use the
 two-scale convergence technique to derive the limit
problem when $\e$ tends to zero.
 Moreover we show that the limit velocity and micro-rotation
fields are uniquely determined via auxiliary well-posed problems and
 the limit pressure is given as the unique solution of a Reynolds equation.
\end{abstract}

\begin{keywords}
 Lubrication, micropolar fluid, free and rough
boundary,  asymptotic analysis, two-scale convergence, Reynolds equation.
\end{keywords}

\begin{AMS}
 35Q35, 76A05, 76D08, 76M50.
\end{AMS}

\pagestyle{myheadings}
\thispagestyle{plain}
\markboth{Asymptotic analysis of a micropolar fluid flow}{Mahdi Boukrouche  and Laetitia Paoli}

 \renewcommand{\theequation}{1.\arabic{equation}}
 \setcounter{equation}{0}
\section{Introduction}\label{introduction}

The theory of micropolar fluids, was introduced and
formulated by A.C. Eringen
in \cite{erin}. It aims to describe fluids containing suspensions of rigid
particles in a viscous medium. Such fluids exhibit micro-rotational effects and
micro-rotational inertia.
 Therefore they  can support couple stress and
distributed body couples. They form  a class of fluids with nonsymmetric
stress tensor for which the classical  Navier-Stokes theory is  inadequate
since it does not take into account the effects of the micro-rotation.
Experimental studies have showned that the micropolar model better represents
the behavior of numerous fluids
 such as polymeric fluids, liquid crystals,
paints, animal blood, colloidal fluids, ferro-liquids, etc., especially when the
characteristic dimension of the flow becomes small (see for instance
\cite{popel}).
Extensive reviews of the  theory and its  applications can be found in
\cite{ariman1, ariman2} or in the books \cite{erin1999} and \cite{luk1} and also
in more
recent articles (see for example \cite{Bakr2011, m2as05, abdullallh2010, geng2011}).

Motivated by lubrication theory where the domain of flow is
usually very thin and the roughness of the boundary strongly affects the flow
(\cite{bc}),  we consider the
motion of the
micropolar fluid  described by the equilibrium of momentum, mass and moment of
momentum. More precisely,  the velocity field of the fluid  $u^{\e} = (u^{\e}_1,
u^{\e}_2 )$, the pressure $p^{\e}$ and the angular velocity of the
micro-rotations of the particles $\om^{\e}$ satisfy the system

\begin{eqnarray}
    && u^{\e}_{t} - (\nu + \nu_r) \Delta u^{\e}
+ (u^{\e}\cdot\nabla) u^{\e}
    + \nabla p^{\e} = 2 \nu_r \; {\rm rot \,} \, \om^{\e} + f^{\e},
\label{2.1} \\
    && \qquad {\rm div } u^{\e} = 0,
\label{2.2} \\
    && \om^{\e}_{t} - \aal \Delta \om^{\e}   + (u^{\e}\cdot\nabla) \; \om^{\e} +
4 \nu_{r} \om^{\e}
    = 2 \nu_{r} \; {\rm rot \, } u^{\e} + g^{\e},\label{2.3}
\end{eqnarray}
in  the space-time domain $(0 , T)\times\Om^{\e}$ with
$$
\Omega^{\e} = \{z=(z_{1} , z_{2}) \in {\br}^{2}, \quad 0 < z_{1} < L, \quad
            0 < z_{2} <  \e h^{\e}(z_{1})\}, \quad h^{\e}(z_{1}) = h(z_{1} , {z_{1} \over  \e})$$

where
 $h$  is a given smooth function, $f^{\e}$ and  $g^{\e}$ are given external
forces and moments,
 $\nu$ is the usual Newtonian viscosity, $\nu_r$ and $\aal$
are   the micro-rotation viscosities,  which  are assumed to be positive
constants (\cite{erin}).

The choice of the domain $\Omega^{\e}$ comes from
one of the important fields of the theory of lubrication given  by
the study of self-lubricating bearings. These bearings are widely used in
mechanical and electromechanical industry, to lubricate the main axis of
rotation of a device, in order to prevent its endomagement.

Such  bearings consist in an inner cylinder and  a outer cylinder, and
along
a circumferencial  section, one can see two non-concentric discs.
The radii of the two cylinders are much smaller than their lengh and the gap
between the two cylinders,
 which is fullfilled with a lubricant,  is much smaller than their radii
(\cite{Bayada-Chambat}). By assuming that the external fields and the flow do
not depend on the coordinate along the longitudinal axis of the bearing, one can
represent the fluid domain by $\Om^{\e}$ which is a 2D view of a cross section
after a radial cut of the two circumferences. 
The boundary of $\Om^{\e}$ is
$\partial\Om^{\e}=\bar{\Gamma}_{0}\cup\bar{\Gamma}_{L}^{\e}\cup\bar{\Gamma}_{1}^
{\e}$,

where $\Gamma_{0}=\{z\in \partial\Om^{\e} :  z_{2}=0\}$ is the bottom,
$\Gamma_{1}^{\e}=\{z\in \partial\Om^{\e} :  z_{2}= \e h^{\e}(z_{1})\}$ is the
upper strongly oscillating part,
 and $\Gamma_{L}^{\e}$ is the lateral part  of the boundary. The surface of the inner cylinder, which corresponds to $\Gamma_0$,  is in contact with the rotating axis of
the device while the surface of the outer cylinder, which corresponds to $\Gamma_{1}^{\e}$, remains still.

Hence the boundary and initial conditions are given as follows
\begin{eqnarray}
  \om^{\e},\quad   u^{\e} , \quad  p^{\e}\quad \mbox{ are L-periodic with
respect to}~z_{1}
\label{eqn:er2.4a}\\
 u^{\e}= U_0e_{1}=(U_0 , 0) ,  \quad   \om^{\e} = W_0\quad \mbox{on} \quad (0 ,
T)\times\Gamma_{0}
\label{eqn:er2.5}\\
  \om^{\e}=0, \quad  u^{\e}\cdot n=0, \qquad    {\partial u^{\e}\over \partial n} \cdot\tau =0
\qquad\mbox{on} \quad (0 , T)\times\Gamma_{1}^{\e}
 \label{eqn:er2.4}\\
  u^{\e} (0 , z) = u^{\e}_{0}(z) ,  \quad  \om^{\e} (0 , z) =
\om^{\e}_{0}(z)\quad
  {\rm for} \quad z\in\Om^{\e}
 \label{eqn:er2.5a}
 \end{eqnarray}

where $\tau$ and $n$ are
respectively  the tangent and normal  unit vectors
to the boundary of the domain $\Om^{\e}$.
 Let us observe that (\ref{eqn:er2.5}) represents  non-homogeneous Dirichlet
conditions along $\Gamma_0$, which means adherence of the fluid to the boundary of the rotating inner cylinder, so
$U_0$ and $W_0$ are two given functions of the time variable only.
The  second condition    in (\ref{eqn:er2.4}) is the
nonpenetration boundary condition, while the last one  is
non-standard, and it means
that  the tangential component of the flux on $\Gamma^{\e}_{1}$ is equal
zero      (\cite{D.L}).

The choice of the particular scaling, with a roughness in inverse proportion to the thickness of the domain, is quite classical in lubrication theory. In \cite{bcc} and in \cite{bc} a Stokes flow is considered with adhering boundary conditions and Tresca boundary conditions at the fluid solid interface respectively. For other related works see also \cite{bucj1, bucj2} or \cite{bacj} for instance.

We  prove the existence and uniqueness of a weak solution $(u^{\e} ,
\om^{\e}, p^{\e})$ in adequate functional framework.
Then we will  establish some a
priori estimates for the velocity, micro-rotation and pressure fields,
independently of $\e$, and finally we will derive and study the limit problem when $\e$ tends to zero.

The paper is organized as follows.  In Section \ref{secSP} we give the
variational formulation.  Then, using an idea of J.L. Lions
(\cite{JLLions78}), we consider the  divergence free condition
(\ref{2.2}) as a constraint, which can be  penalized, and   we prove in Theorem \ref{th2.1}
the existence and uniqueness of a weak solution $(u^{\e} , \om^{\e}, p^{\e})$  for any value of $\e$.
Let us emphasize that our proof ensures that the pressure (unique up to an additional  function of
time) belong to  $H^{-1}(0 , T , L^{2}_{0}(\Om^{\e}))$.  This result  is more
suitable
for the next parts of our study, than  $W^{-1, \infty}(0 , T ,
L^{2}_{0}(\Om^{\e}))$ obtained by
 J. Simon
\cite{Simon1999} (see also Theorem 2.1 in 
\cite{galdi}).

In Section \ref{uniformEst},
 we establish some a priori estimates for the velocity and micro-rotation fields
 in Proposition
 \ref{pro1} and for the pressure in Proposition  \ref{prop2}.
In Section \ref{twoscaleconv}, since we deal with an evolution problem,
we extend first  the classical   two-scale convergence results
(\cite{allaire, nguetseng}) to a time-dependent setting  and we use this
technique  to
prove  some convergence properties for  the velocity in
Proposition \ref{prop4.1},    the micro-rotation in
Proposition  \ref{prop4.2}, and  the pressure in  Proposition \ref{prop4.3}.

Then, in Section \ref{limitprobl}
we  derive  the limit problem when $\e$ tends to zero in Theorem \ref{limit_pb}.
We notice that the trilinear and rotational terms, as well as
  the time derivative do not contribute  when we pass to the  limit.
However the time variable remains in the limit problem  as
a parameter.  We note also  that the limit problem can be easily
decoupled:  we
obtain a variational equality involving only the limit velocity and the limit
pressure
and another variational equality involving  the
limit  micro-rotation. However, the micropolar nature of the fluid still appears
in the limit problem for the velocity and pressure since we keep the viscosity
$\nu + \nu_r$.
Moreover we show in Proposition \ref{prop5.1} that the limit velocity and
micro-rotation fields are uniquely determined via auxiliary well-posed
problems. In Proposition \ref{prop5.2}, we prove that the limit pressure
is given as the unique
solution of a Reynolds equation. Finally in Section \ref{open-Pbs}
we propose a generalization to the case where both the upper and the lower boundary of the fluid domain are oscillating.
 \renewcommand{\theequation}{2.\arabic{equation}}
 \setcounter{equation}{0}
\section{Existence and uniqueness results} \label{secSP}
We assume that
  \begin{eqnarray} \label{Lper1}
{L\over \e}\in \bn, \quad  h :  (z_{1} , \eta_1) \mapsto h(z_{1} , \eta_1)
 \mbox{ is L-periodic in  } z_{1}
           \mbox{ and 1-periodic  in  }   \eta_1,
  \end{eqnarray}
so $h$ is  L-periodic in $z_{1}$. We  assume also  that
  \begin{eqnarray} \label{Lper2}
 h \in {\cal C}^{\infty}([0 , L]\times \br), \quad {\partial h\over
\partial \eta_1}  \quad \mbox{is 1-periodic in  }  \eta_1,
 \end{eqnarray}
 and  there exist $h_{m}$ and $h_{M}$ such that
  \begin{eqnarray}
0< h_{m}=\min_{[0 , L]\times [0 , 1]}h(z_{1} , \eta_1),
\quad \mbox{and} \quad  h_{M}=\max_{[0 , L]\times [0 , 1]}h(z_{1} ,
\eta_1).
 \end{eqnarray}

\begin{lemma}\label{lUW}
Let  the functions ${\cal U}$, ${\cal W}$ be
in  ${\cal D}(-\infty , h_{m})$, and
$U_{0}$,  $W_{0}$ be  in $H^{1}(0 , T)$, with
${\cal U}(0)=1$, ${\cal W}(0)=1$.
We set
$$
 U^{\e}(t ,z_{2})= {\cal U}^{\e}(z_{2})U_{0}(t)
 = {\cal U}({z_{2}\over \e})U_{0}(t) ,
\qquad
 W^{\e}(t ,z_{2})= {\cal W}^{\e}(z_{2})W_{0}(t)
= {\cal W}({z_{2}\over \e})W_{0}(t).$$
Then we have for all $ (t , z_1 )\in  (0 , T)\times(0,L)$
\begin{equation} \label{eqn:er2.6.1}
U^{\e} (t , 0)=U_0 (t), \, U^{\e} (t , \e h^{\e} (z_1 ))=0,
\, {\partial U^{\e}\over \partial z_{2}}(t , \e h^{\e} (z_1 ))=0,
\end{equation}
\begin{equation} \label{eqn:er2.7.1}
W^{\e} (t , 0)=W_0 (t), \quad W^{\e} (t , \e h^{\e} (z_1 ))=0.
\end{equation}
\end{lemma}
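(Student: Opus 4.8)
The plan is to obtain all five identities by direct substitution into the explicit formulas defining $U^{\e}$ and $W^{\e}$, the only nontrivial ingredient being the lower bound $h^{\e}(z_{1})\ge h_{m}$ on $(0,L)$. First I would record the consequence of the hypothesis ${\cal U},{\cal W}\in{\cal D}(-\infty,h_{m})$: each of these functions has compact support contained in the open interval $(-\infty,h_{m})$, hence ${\cal U}$, ${\cal W}$ and all their derivatives vanish on $[h_{m},+\infty)$; in particular ${\cal U}(s)={\cal U}'(s)=0$ and ${\cal W}(s)=0$ for every $s\ge h_{m}$. Next, from the $1$-periodicity of $h$ in $\eta_{1}$ together with the definition of $h_{m}$, for every $z_{1}\in(0,L)$ we have $h^{\e}(z_{1})=h(z_{1},z_{1}/\e)=h(z_{1},\{z_{1}/\e\})\ge h_{m}$, where $\{\cdot\}$ denotes the fractional part; so the number $h^{\e}(z_{1})$ always lies outside the supports of ${\cal U}$ and ${\cal W}$.

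Then I would verify the boundary values one by one. Evaluating at $z_{2}=0$ and using the normalizations ${\cal U}(0)=1$, ${\cal W}(0)=1$ gives $U^{\e}(t,0)={\cal U}(0)U_{0}(t)=U_{0}(t)$ and $W^{\e}(t,0)={\cal W}(0)W_{0}(t)=W_{0}(t)$, which are the first identities in (\ref{eqn:er2.6.1}) and (\ref{eqn:er2.7.1}). For the values on $\Gamma_{1}^{\e}$, substituting $z_{2}=\e h^{\e}(z_{1})$ gives $U^{\e}(t,\e h^{\e}(z_{1}))={\cal U}(h^{\e}(z_{1}))U_{0}(t)$ and $W^{\e}(t,\e h^{\e}(z_{1}))={\cal W}(h^{\e}(z_{1}))W_{0}(t)$, both of which vanish because $h^{\e}(z_{1})\ge h_{m}$ is outside the supports of ${\cal U}$ and ${\cal W}$; this proves the second identity in (\ref{eqn:er2.6.1}) and the second one in (\ref{eqn:er2.7.1}).

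For the last identity in (\ref{eqn:er2.6.1}), differentiating the formula for $U^{\e}$ in $z_{2}$ yields $\partial U^{\e}/\partial z_{2}(t,z_{2})=\e^{-1}{\cal U}'(z_{2}/\e)U_{0}(t)$, and evaluating at $z_{2}=\e h^{\e}(z_{1})$ gives $\e^{-1}{\cal U}'(h^{\e}(z_{1}))U_{0}(t)=0$, again because $h^{\e}(z_{1})\ge h_{m}$ lies outside the support of ${\cal U}'$. This exhausts the list of required relations. There is essentially no obstacle here: the only point needing a little care is that the statement uses not merely ${\cal U}(h_{m})=0$ but the full support condition ${\cal U}\in{\cal D}(-\infty,h_{m})$, which is precisely what guarantees that ${\cal U}$ \emph{and its first derivative} vanish at $h_{m}$ and beyond, and hence that both $U^{\e}$ and its normal derivative vanish on the rough upper boundary $\Gamma_{1}^{\e}$; the regularity $U_{0},W_{0}\in H^{1}(0,T)$ plays no role in these pointwise-in-$t$ identities and is only needed later.
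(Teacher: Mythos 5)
Your proof is correct and follows essentially the same direct-substitution route as the paper's; you merely make explicit the two supporting facts the paper leaves implicit, namely that $h^{\e}(z_{1})\ge h_{m}$ (via the $1$-periodicity of $h$ in $\eta_{1}$) and that ${\cal U},{\cal U}',{\cal W}$ all vanish on $[h_{m},\infty)$ because of the support condition ${\cal U},{\cal W}\in{\cal D}(-\infty,h_{m})$.
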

\begin{proof}
Indeed,  $U^{\e}(t , 0) =  {\cal  U}(0)U_{0}(t)=U_{0}(t)$,
$U^{\e}(t , \e h^{\e}(z_{1})) = {\cal  U}(h(z_{1} ,{z_{1}\over \e}
))U_{0}(t) =0$
and
$${\partial U^{\e}\over \partial z_{2}}(t , \e h^{\e}(z_{1}))
 = {1\over \e} {\cal  U}' (h^{\e}(z_{1}))U_{0}(t)
 = {1\over \e} {\cal  U}' (h(z_{1} ,{z_{1}\over \e}))U_{0}(t)
=0, $$
thus (\ref{eqn:er2.6.1}) follows. The proof is valid also for
(\ref{eqn:er2.7.1}).
\end{proof}

We can now set
\begin{equation} \label{eqn:er2.6}
u^{\e} (t , z_{1} , z_{2}) = U^{\e} (t ,  z_{2})e_1 + v^{\e} (t , z_{1} , z_{2})
\end{equation}
\begin{equation} \label{eqn:er2.7}
\om^{\e} (t , z_{1} , z_{2}) = W^{\e} (t , z_{2} ) +  Z^{\e} (t , z_{1} , z_{2})
\end{equation}
with $U^{\e}$ and $W^{\e}$ satisfying (\ref{eqn:er2.6.1})
(\ref{eqn:er2.7.1}). Moreover
$$ {\partial u^{\e}_{i}\over \partial z_{j}}= {\partial v^{\e}_{i}\over \partial
z_{j}} +
  {\partial \over \partial z_{j}}(U^{\e} (\cdot ,  z_{2})e_1 )=
\left\{\begin{array}{ll}
       {\partial v^{\e}_{i}\over \partial z_{j}}    \quad {\rm if} \quad j=1,\\
       {\partial v^{\e}_{i}\over  \partial z_{j}} +  {\partial U^{\e}\over
\partial z_{2}}(\cdot , z_{2})e_1 \quad {\rm if} \quad j=2
       \end{array}\right.
$$
and from (\ref{eqn:er2.6.1})
$  {\partial U^{\e}\over\partial z_{2}}(t , z_{2})={\partial U^{\e}\over
\partial z_{2}}(t , \e h^{\e} (z_1))=0
\quad\mbox{ for } (t , z_{2})\in (0 , T)\times\Gamma_{1}^{\e}$
 so
\begin{eqnarray}\label{e2.15}
 {\partial u^{\e}_{i}\over \partial z_{j}}=
{\partial v^{\e}_{i}\over \partial z_{j}}\quad\mbox{ for} \quad j= 1, 2
\quad  \mbox{ on } \quad (0 , T)\times\Gamma_{1}^{\e}.
\end{eqnarray}
Recall also that
 $${\rm rot \, } u^{\e} = \frac{\partial u^{\e}_2}{\partial z_1}
    - \frac{\partial u^{\e}_1}{\partial z_2}, \;\qquad \;
    {\rm rot \, } \, \om^{\e} = \bigg(\frac{\partial \om^{\e}}{\partial z_2},
    - \frac{\partial \om^{\e}}{\partial z_1}\bigg).$$
Then the problem (\ref{2.1})-(\ref{eqn:er2.5a}) becomes
\begin{eqnarray}\label{eqn:er2.8}
 v^{\e}_{t} -(\nu +\nu_{r})\Delta v^{\e} + (v^{\e} \cdot \nabla) v^{\e}
+ U^{\e}{\partial v^{\e}\over \partial z_{1}}
+ (v^{\e})_2  {\partial U^{\e}\over \partial z_{2}} e_1 +  \nabla p^{\e}
=
2 {\nu}_{r} {\rm  rot \,}Z^{\e}
\\ 
+ (\nu + \nu_r) {\partial^{2} U^{\e}\over \partial z_{2}^{2}}e_1
+ 2\nu_r {\partial W^{\e}\over \partial z_{2}}e_1
-{\partial U^{\e}\over \partial t}e_1  + f^{\e}  \quad \mbox{in } (0 ,
T)\times\Om^{\e},\nonumber
\end{eqnarray}
\begin{eqnarray}
{\rm div \,} v^{\e} = 0    \quad \mbox{in }  \Om^{\e}, \quad t\in  (0 , T),
\label{eqn:er2.9}
\end{eqnarray}
\begin{eqnarray}
 Z^{\e}_{t} - \aal\Delta Z^{\e}+ (v^{\e} \cdot \nabla)Z^{\e} + 4 \nu_{r}Z^{\e}
+  U^{\e}{\partial Z^{\e}\over \partial z_1 } +
(v^{\e})_2  {\partial W^{\e} \over \partial z_{2}}
=
2\nu_{r}{\rm rot \,} v^{\e}
 \nonumber\\ + \aal {\partial^{2}
W^{\e}\over\partial
z_{2}^{2}}
 - 2\nu_r {\partial U^{\e}\over\partial z_{2}} -
4\nu_rW^{\e}  -{\partial W^{\e}\over \partial t}  + g^{\e}\quad \mbox{in } (0 ,
T)\times\Om^{\e}, \label{eqn:er2.10}
\end{eqnarray}
\begin{equation}\label{eqn:er2.8a}
v^{\e}, Z^{\e} \ \mbox{\rm  and }  p^{\e}\  \mbox{\rm  L-periodic in } z_{1},
 \end{equation}
  \begin{equation} \label{eqn:er2.11}
 Z^{\e} = 0,  \quad     v^{\e} =0     \quad \mbox{\rm on} \quad (0 ,
T)\times\Gamma_{0},
 \end{equation}
  \begin{equation} \label{eqn:er2.11b}
 Z^{\e}=0, \quad  v^{\e}\cdot n=0, \qquad {\partial v^{\e}\over \partial n}
\cdot\tau =0
\quad \quad \mbox{\rm on}  \quad (0 , T)\times\Gamma^{\e}_{1},
\end{equation}
 \begin{eqnarray}
v^{\e}(0, z) =  v^{\e}_{0}(z) =  u^{\e}_{0}(z) - U^{\e}(0 , z_2 )e_1  \quad
\mbox{\rm  in } \Om^{\e},\label{eqn:er2.11a} \\
\quad Z^{\e}(0 , z) =
Z^{\e}_{0}(z) = \om^{\e}_{0}(z) - W^{\e}(0 , z_2 )  \quad \mbox{\rm  in }
\Om^{\e}, \label{eqn:er2.11ab}
\end{eqnarray}
where we have denoted by $(v^{\e})_2$  the second component of $v^{\e}$.

To define the  weak formulation of the above problem
(\ref{eqn:er2.8})- (\ref{eqn:er2.11ab}),
we recall that $\Gamma^{\e}_{1}$ is
defined by the equation $z_{2}= \e h^{\e}(z_{1})$,
thus the unit outward normal vector to $\Gamma^{\e}_{1}$
 is given by
$$n=\frac{1}{\sqrt{ 1 + (\e (h^{\e})'(z_{1}))^2}} (-\e
(h^{\e})'(z_{1}) , 1)$$
 and
$v\cdot n= 0$ becomes
$-\e (h^{\e})'(z_{1}) v_{1} + v_{2}  = 0$ on   $\Gamma^{\e}_{1}$.
We consider now the following functional framework
$$\tilde{V^{\e}}= \{ v \in {\cal C}^{\infty}(\overline{\Om^{\e}})^{2}:
 \,\, v \,\,\mbox{\rm is  L-periodic  in }\,\, z_1,\,\,\,  v_{|_{\Gamma_0}} =0,
\,     -\e (h^{\e})'(z_{1})  v_{1} + v_{2}  =0
\mbox{ on } \Gamma^{\e}_{1}\}$$
$$\tilde{H^{1}}^{\e}= \{Z \in {\cal C}^{\infty}(\overline{ \Om^{\e} }): \,\,\,Z
    \,\,\,\mbox{\rm is  L-periodic  in }\,\,\, z_1,\,\,\,Z=0 \,\,\,{\rm
on }\,\,\,
    \Gamma_0\cup\Gamma^{\e}_1\}$$
$$V^{\e} = {\rm closure \,\,  of\,\, } \tilde{V^{\e}} \,\,
 {\rm  in }  \,\, H^{1}(\Om^{\e})\times  H^{1}(\Om^{\e}),\qquad
V^{\e}_{div} = \{v\in V^{\e} \, : \,
   {\rm div \,} v = 0,  \,\, \mbox{\rm in}  \,\, \Om^{\e}\}$$
$$H^{\e} = {\rm  closure \,\,  of   \, } \tilde{V^{\e}}  \,\, {\rm in }\,\,
  L^{2}(\Om^{\e}) \times  L^{2}(\Om^{\e}), \qquad
{H^{1,}}^{\e} = {\rm closure \,\,  of\,\, } \tilde{H^{1}}^{\e} \,\, {\rm  in }
 \,\, H^{1}(\Om^{\e}),
 $$
$${H^{0,}}^{\e} = {\rm  closure \,\,  of   \, } \tilde{H^{1}}^{\e}\,\,
{\rm in }\,\,  L^{2}(\Om^{\e}),
\quad L^{2}_{0}(\Om^{\e})=\{q\in L^{2}(\Om^{\e}) : \, \int_{\Om^{\e}} q(z)
dz=0\}.  $$
We endowed these functional spaces with the inner products and norms defined by
$$[\bar{v} ,\Theta] = (v , \varphi) + (Z , \psi)  \mbox{ in }
 H^{\e}\times {H^{0,}}^{\e}  \mbox{ with the norm }
[\bar{v}] = [\bar{v}, \bar{v}]^{1\over 2} $$
$$ [[\bar{v} , \Theta]] = (\nabla v , \nabla \varphi) + (\nabla Z ,
\nabla \psi)
\mbox{ in }  V^{\e}\times {H^{1,}}^{\e}  \mbox{ with the norm }
[[\bar{v}]] = [[\bar{v}, \bar{v}]]^{1\over 2}
$$
for any pairs of functions   $\bar{v}= (v , Z)$ and $\Theta =
(\varphi , \psi)$.
The  weak formulation of the
problem   (\ref{eqn:er2.8})- (\ref{eqn:er2.11ab}) is given by\newline
\noindent {\bf Problem $(P^{\e})$}
Find
    $$\bar{v}^{\e} = (v^{\e} , Z^{\e})\in
\Bigl( {\cal C}([0 , T]; H^{\e})\cap L^{2}(0 , T; V^{\e}_{div}) \Bigr) \times
\Bigr( {\cal C}([0 ,
      T]; {H^{0,}}^{\e})\cap L^{2}(0 , T;  {H^{1,}}^{\e}) \Bigr) $$
 and $p^{\e}\in  H^{-1}(0 , T ; L^{2}_{0}(\Om^{\e}))$,
such that
\begin{eqnarray} \label{eqn:er2.14}
[{\partial\bar{v}^{\e} \over \partial t}(t) , \Theta^{\e}] +
a(\bar{v}^{\e}(t) , \Theta^{\e})
+B(\bar{v}^{\e}(t) ,\bar{v}^{\e}(t) , \Theta^{\e}) + {\cal
R}(\bar{v}^{\e}(t) ,
\Theta^{\e})=\nonumber\\
= (p^{\e}(t) \, , \, {\rm div }\varphi^{\e})
+ ({\cal F}(\bar{v}^{\e}(t))  , \Theta^{\e}) \qquad
\forall\Theta^{\e} = (\varphi^{\e} , \psi^{\e})  \in V^{\e}\times
{H^{1,}}^{\e},
\end{eqnarray}
 with  the initial condition
\begin{equation} \label{eqn:er2.14a}
  \bar{v}^{\e}(z , 0) = \bar{v^{\e}_{0}} (z) = ( v^{\e}_{0}(z) \, ,
\,Z^{\e}_{0}(z)),
\end{equation}
where
\begin{eqnarray}
 ({\cal F}(\bar{v}^{\e}(t))  , \Theta^{\e})=
-a(\bar{\xi^{\e}}(t),\Theta^{\e})
-B(\bar{\xi^{\e}}(t),\bar{v}^{\e}(t),\Theta^{\e})
- B(\bar{v}^{\e}(t),\bar{\xi^{\e}}(t), \Theta^{\e})
 \nonumber\\
- {\cal
R}(\bar{\xi^{\e}}(t),\Theta^{\e})
- [{\partial \bar{\xi^{\e}}  \over \partial t}(t) ,  \Theta^{\e}] +
[\bar{f}^{\e}(t) ,  \Theta^{\e}],
\quad  \bar{\xi^{\e}} = (U^{\e}e_1 , W^{\e}),
\end{eqnarray}
and for all $\bar{v}= (v , Z)$, $\bar{u}= (u , w)$, and $\Theta = (\varphi
, \psi)$
in $V^{\e}\times {H^{1,}}^{\e}$,
\begin{eqnarray*}
\left[\bar{f}^{\e} , \Theta\right] &=& (f^{\e} ,\varphi) +
(g^{\e} , \psi),\nonumber\\
a(\bar{v} , \Theta) &=& (\nu +\nu_{r})(\nabla v , \nabla \varphi)
                          + \aal(\nabla Z, \nabla \psi),
\nonumber\\
 {\cal R}(\bar{v} , \Theta) &=& - 2 \nu_{r}({\rm rot \,} Z , \varphi)
-
  2 \nu_{r}({\rm rot \,} v , \psi) + 4\nu_{r} (Z , \psi) ,
\nonumber\\
B(\bar{v} ,\bar{u} , \Theta) &=& b(v , u , \varphi)
+ b_{1}(v , w , \psi)= \sum_{i, j=1}^{2}\int_{\Om^{\e}} v_{i}{\partial
u_{j}\over\partial z_{i}} \varphi_{j} dz
+\sum_{i=1}^{2}\int_{\Om^{\e}}v_{i} {\partial w\over\partial z_{i}} \psi dz.
\end{eqnarray*}

\begin{theorem}\label{th2.1}
 Let $T > 0$,  $U^{\e}$ and $ W^{\e}$ be given as  in {\rm Lemma \ref{lUW}},
$f^{\e}$ in $(L^{2}((0 , T)\times\Omega^{\e}))^{2}$,
   $g^{\e}$ in $L^{2}((0 , T)\times\Omega^{\e})$ and
  $(v^{\e}_{0} \, , \,Z^{\e}_{0})$ in $H^{\e}\times  {H^{0,}}^{\e}$.
Then  problem $(P^{\e})$ admits a unique solution $(v^{\e} , Z^{\e}, p^{\e})$.
\end{theorem}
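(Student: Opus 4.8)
The plan is to argue by the penalization technique of J.L.\ Lions: relax the divergence-free constraint (\ref{eqn:er2.9}), solve the resulting unconstrained problem by a Galerkin scheme, derive estimates \emph{uniform in the penalization parameter}---including an estimate on the penalized pressure in $H^{-1}(0,T;L^{2}_{0}(\Om^{\e}))$---and then pass to the limit. Throughout $\e$ is fixed, and all constants may depend on $\e$, on the data and on the viscosities. First, for $\lambda>0$ I would consider the problem obtained from (\ref{eqn:er2.14}) by replacing the pressure term $(p^{\e}(t),{\rm div}\,\varphi^{\e})$ with $-\lambda({\rm div}\,v^{\e}_{\lambda},{\rm div}\,\varphi^{\e})$, so that $p^{\e}_{\lambda}:=-\lambda\,{\rm div}\,v^{\e}_{\lambda}$ plays the role of the pressure; this problem is posed on all of $V^{\e}\times{H^{1,}}^{\e}$ without constraint. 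In the penalized problem I would also use the usual skew-symmetrized trilinear form $\widetilde B$, which coincides with $B$ on $V^{\e}_{div}\times{H^{1,}}^{\e}$ and satisfies $\widetilde B(\bar{v},\bar{v},\bar{v})=0$ for every $\bar{v}$. Since $V^{\e}\times{H^{1,}}^{\e}$ is separable, I would pick a Hilbertian basis $(\Theta_{k})_{k\ge1}$, look for $\bar{v}^{\e}_{\lambda,m}(t)=\sum_{k=1}^{m}c^{m}_{k}(t)\Theta_{k}$ solving the $m$ projected equations with $\bar{v}^{\e}_{\lambda,m}(0)$ the projection of $\bar{v}^{\e}_{0}$, and invoke the Cauchy--Lipschitz theorem (the right-hand side is quadratic, hence locally Lipschitz) to obtain a maximal solution.

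The next step is the \emph{a priori} estimates. Testing the projected equations with $\bar{v}^{\e}_{\lambda,m}$, I would use: the coercivity of $a$ and the Poincar\'e inequality on $\Om^{\e}$ (available because the test functions vanish on $\Gamma_{0}$); the identity $({\rm rot}\,Z,v)=(Z,{\rm rot}\,v)$---obtained by integration by parts, the boundary contributions vanishing since $Z=0$ on $\Gamma_{0}\cup\Gamma_{1}^{\e}$ and by $L$-periodicity---which, together with Young's inequality, shows that $a(\bar{v},\bar{v})+{\cal R}(\bar{v},\bar{v})$ controls $\nu\|\nabla v\|^{2}+\aal\|\nabla Z\|^{2}$ up to a lower-order term $C[\bar{v}]^{2}$; the cancellation $\widetilde B(\bar{v}^{\e}_{\lambda,m},\bar{v}^{\e}_{\lambda,m},\bar{v}^{\e}_{\lambda,m})=0$; and, for the terms coming from ${\cal F}$, the special structure of $\bar{\xi^{\e}}=(U^{\e}e_{1},W^{\e})$ from Lemma \ref{lUW} (since $\partial_{z_{1}}U^{\e}=\partial_{z_{1}}W^{\e}=0$ and $U^{\e},W^{\e}$ vanish on $\Gamma_{1}^{\e}$ one gets $B(\bar{\xi^{\e}},\bar{v},\bar{v})=0$, while $B(\bar{v},\bar{\xi^{\e}},\bar{v})$ and the remaining linear pieces are bounded by $C(t)[\bar{v}]^{2}+$ viscous terms, with $C\in L^{\infty}(0,T)$), together with $U_{0},W_{0}\in H^{1}(0,T)$ and $f^{\e},g^{\e}\in L^{2}((0,T)\times\Om^{\e})$. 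A Gronwall argument then bounds $\bar{v}^{\e}_{\lambda,m}$ in $L^{\infty}(0,T;H^{\e}\times{H^{0,}}^{\e})\cap L^{2}(0,T;V^{\e}\times{H^{1,}}^{\e})$ and $\sqrt{\lambda}\,\|{\rm div}\,v^{\e}_{\lambda,m}\|_{L^{2}((0,T)\times\Om^{\e})}$, uniformly in $m$ \emph{and} in $\lambda$, so $T_{m}=T$. Comparing terms in the equation bounds $\partial_{t}\bar{v}^{\e}_{\lambda,m}$ in a suitable dual space, so by Aubin--Lions--Simon compactness ($V^{\e}\times{H^{1,}}^{\e}$ compactly embedded in $H^{\e}\times{H^{0,}}^{\e}$) I can pass to the limit $m\to\infty$; the strong $L^{2}$ convergence handles the quadratic term $\widetilde B$, so the limit $\bar{v}^{\e}_{\lambda}$ solves the penalized problem, belongs to the asserted continuous-in-time class by the Lions--Magenes lemma, and satisfies the initial condition.

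The crucial point is the pressure estimate and the limit $\lambda\to\infty$. By the Ne\v{c}as inequality (the divergence maps $V^{\e}$ onto $L^{2}_{0}(\Om^{\e})$ with a bounded right inverse, compatibly with the boundary conditions defining $V^{\e}$), given $q\in L^{2}_{0}(\Om^{\e})$ there is a \emph{time-independent} $\varphi_{q}\in V^{\e}$ with ${\rm div}\,\varphi_{q}=q$ and $\|\varphi_{q}\|_{H^{1}}\le C\|q\|_{L^{2}}$. Inserting $\Theta=(\varphi_{q},0)$ into the penalized problem expresses $(p^{\e}_{\lambda},q)$ as a finite sum of terms; all of them except the one containing $\partial_{t}v^{\e}_{\lambda}$ are bounded in $L^{2}(0,T)$ by $C\|q\|_{L^{2}}$ thanks to the uniform bounds above and $2$D Sobolev embeddings, while the last term equals $\frac{d}{dt}(v^{\e}_{\lambda},\varphi_{q})$ and, $\varphi_{q}$ being independent of $t$, is bounded in $H^{-1}(0,T)$ by $C\|q\|_{L^{2}}$ because $t\mapsto(v^{\e}_{\lambda}(t),\varphi_{q})$ is bounded in $L^{2}(0,T)$ uniformly in $\lambda$. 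Hence $p^{\e}_{\lambda}$ is bounded in $H^{-1}(0,T;L^{2}_{0}(\Om^{\e}))$ independently of $\lambda$. Along a subsequence $\bar{v}^{\e}_{\lambda}\tow\bar{v}^{\e}$ in $L^{2}(0,T;V^{\e}\times{H^{1,}}^{\e})$ and strongly in $L^{2}(0,T;H^{\e}\times{H^{0,}}^{\e})$, $p^{\e}_{\lambda}\tow p^{\e}$ in $H^{-1}(0,T;L^{2}_{0}(\Om^{\e}))$, and ${\rm div}\,v^{\e}_{\lambda}=-p^{\e}_{\lambda}/\lambda\to0$ in $L^{2}$, so $v^{\e}\in L^{2}(0,T;V^{\e}_{div})$ and $\widetilde B(\bar{v}^{\e},\cdot,\cdot)=B(\bar{v}^{\e},\cdot,\cdot)$. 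Passing to the limit in every term---linear ones by weak convergence, $\widetilde B$ by strong $L^{2}$ convergence, the pressure term in the $H^{-1}(0,T)$--$H^{1}_{0}(0,T)$ duality---and using Lions--Magenes again for the time continuity and the initial condition, I obtain a solution $(v^{\e},Z^{\e},p^{\e})$ of $(P^{\e})$.

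For \emph{uniqueness}, given two solutions I set $\bar{w}=\bar{v}^{(1)}-\bar{v}^{(2)}$ and $\pi=p^{(1)}-p^{(2)}$; then $\bar{w}(0)=0$ and $\bar{w}(t)\in V^{\e}_{div}\times{H^{1,}}^{\e}$, so testing the difference of the two equations with $\Theta=\bar{w}(t)$ annihilates both the pressure term and the $\bar{v}$-independent part of ${\cal F}$, and $B(\bar{v}^{(1)},\bar{w},\bar{w})=0$. The surviving trilinear contributions, $B(\bar{w},\bar{v}^{(2)},\bar{w})$ and the lower-order term $B(\bar{w},\bar{\xi^{\e}},\bar{w})$, are estimated in $2$D by Ladyzhenskaya's inequality, giving $|B(\bar{w},\bar{v}^{(2)},\bar{w})|\le\frac12\bigl(\nu\|\nabla w\|^{2}+\aal\|\nabla Z_{w}\|^{2}\bigr)+\phi(t)[\bar{w}]^{2}$ with $\phi\in L^{1}(0,T)$ (it involves $\|\nabla v^{(2)}(t)\|^{2}+\|\nabla Z^{(2)}(t)\|^{2}$); Gronwall's inequality and $[\bar{w}](0)=0$ force $\bar{w}\equiv0$, and then $(\pi,{\rm div}\,\varphi)=0$ for all $\varphi\in V^{\e}$, so the Ne\v{c}as inequality yields $\pi=0$. \emph{The main obstacle} is the uniform-in-$\lambda$ bound on $p^{\e}_{\lambda}$ in $H^{-1}(0,T;L^{2}_{0}(\Om^{\e}))$: it relies on having a bounded right inverse of the divergence on $\Om^{\e}$ that respects the non-standard conditions $v\cdot n=0$ and $\partial v/\partial n\cdot\tau=0$ on the oscillating part $\Gamma_{1}^{\e}$, and on the observation that the sole term of the momentum balance which is merely $H^{-1}$ in time, namely $\partial_{t}v^{\e}_{\lambda}$, still produces an $H^{-1}$-in-time bound once it is tested against the time-independent $\varphi_{q}$. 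A recurring secondary difficulty is that every integration by parts (in ${\cal R}$, in the trilinear term $B$, and in recovering $p^{\e}$) must be reconciled with the mixed boundary conditions on $\Gamma_{1}^{\e}$ and the $L$-periodicity on $\Gamma_{L}^{\e}$.
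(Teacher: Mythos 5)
Your proposal follows essentially the same route as the paper: relaxing the divergence constraint by a penalty term (your $\lambda$ is the paper's $1/\delta$), adding the skew-symmetrizing correction to $B$ so that testing with the solution kills the trilinear term, solving the penalized problem by Galerkin with Gr\"onwall estimates uniform in both the Galerkin index and the penalty parameter, obtaining the uniform bound on $p^{\e}_{\lambda}=-\lambda\,\mathrm{div}\,v^{\e}_{\lambda}$ in $H^{-1}(0,T;L^{2}_{0}(\Om^{\e}))$ via a right inverse of the divergence on $H^{1}_{0}(\Om^{\e})^{2}\subset V^{\e}$ tested against the time derivative in duality, and then letting the penalty parameter go to its limit; uniqueness is by the same Gr\"onwall-plus-Ladyzhenskaya argument. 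The only cosmetic differences are that the paper phrases the pressure estimate through a time-dependent lift $\varphi(t)=P\omega(t)$ with $\omega\in H^{1}_{0}(0,T;L^{2}_{0})$ rather than a fixed $\varphi_{q}$ paired against $\theta\in H^{1}_{0}(0,T)$, and that the Bogovskii-type lift lands in $H^{1}_{0}(\Om^{\e})^{2}$, so the non-standard conditions on $\Gamma_{1}^{\e}$ that you flag as the main obstacle are in fact trivially satisfied.
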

 \begin{proof}
Following the techniques proposed by J.L.Lions in \cite{JLLions78}, we construct
a sequence of approximate solutions by relaxing the divergence free condition
for the velocity field. More precisely we consider the following penalized
problems $(P^{\e}_{\delta})$, with $\delta >0$:

\noindent {\bf Problem $(P^{\e}_{\delta})$}
 Find
$$\bar{v}^{\e}_{\delta} =(v^{\e}_{\delta}, Z^{\e}_{\delta} )  \in
\Bigl( {\cal C}([0 , T]; H^{\e})\cap L^{2}(0 , T; V^{\e}) \Bigr) \times \Bigl(
{\cal C}([0 ,
      T]; {H^{0,}}^{\e})\cap L^{2}(0 , T;  {H^{1,}}^{\e}) \Bigr) $$
 such that
\begin{eqnarray}\label{cal(P)}
&&\qquad [{\partial \bar{v}^{\e}_{\delta}\over \partial t} , \Theta^{\e}] +
 a(\bar{v}^{\e}_{\delta} , \Theta^{\e})
+B(\bar{v}^{\e}_{\delta} , \bar{v}^{\e}_{\delta} , \Theta^{\e})
+{1\over 2}\left\{
( v^{\e}_{\delta}{\rm div \,}  v^{\e}_{\delta} \, ,  \, \varphi^{\e})
+(Z^{\e}_{\delta}{\rm div \,}  v^{\e}_{\delta} \, ,  \, \psi^{\e}) \right\}
\nonumber\\
&&\qquad +{1\over \delta}({\rm div }  v^{\e}_{\delta} \, , \,  {\rm div
}\varphi^{\e})
= ({\cal F}(\bar{v}^{\e}_{\delta})  , \Theta^{\e})
- {\cal R}(\bar{v}^{\e}_{\delta} , \Theta^{\e}) \quad
\forall\Theta^{\e}
= (\varphi^{\e} , \psi^{\e})  \in V^{\e}\times {H^{1,}}^{\e},
\end{eqnarray}
with the initial condition
\begin{eqnarray}\label{cal(Pc)}
\bar{v}^{\e}_{\delta}(0)=  \bar{v^{\e}_{0}}.
\end{eqnarray}
The first term on the right of the second line of
(\ref{cal(P)})
is the penalty term and the term
$${1\over 2}\left\{
( v^{\e}_{\delta}{\rm div \,}  v^{\e}_{\delta} \, ,  \, \varphi^{\e})
+(Z^{\e}_{\delta}{\rm div \,}  v^{\e}_{\delta} \, ,  \, \psi^{\e}) \right\} $$
is added in order to vanish with $B ( \bar{v}^{\e}_{\delta} ,
\bar{v}^{\e}_{\delta} , \Theta^{\e})$ when $\Theta^{\e} = \bar v^{\e}_{\delta}$.

Hence the proof of Theorem \ref{th2.1} is divided in two parts. First we prove the existence of a
solution of $(P^{\e}_{\delta})$, for any $\delta >0$, by using a Galerkin
method. Then we pass to the limit as $\delta$ tends to zero by applying
compactness arguments and we prove that the limit solves problem $(P^{\e})$.

\smallskip

Since   $V^{\e}$ and $ {H^{1,}}^{\e}$ are closed subspaces of
$(H^1(\Om^{\e}))^2$ and $H^1(\Om^{\e})$, they admit  Hilbertian bases, denoted
as $(\Phi_{j})_{j \ge 1}$ and $ (\psi_{j})_{j\geq 1}$ respectively, which are
orthonormal in $(H^1(\Om^{\e}))^2$ and $H^1(\Om^{\e})$ and are  also orthogonal
bases of $(L^2(\Om^{\e}))^2$ and $L^2(\Om^{\e})$.
For all $m \ge 1$ we define $v^{\e}_{0 m}$ and $Z^{\e}_{0m}$ as the
$L^2$-orthogonal projection of $v^{\e}_0$ and $Z^{\e}_0$ on the finite
dimentional subspaces $\langle \Phi_1, \dots, \Phi_m \rangle$ and $\langle
\psi_1, \dots, \psi_m \rangle$ respectively and we let $\bar v^{\e}_{0 m} = (
v^{\e}_{0 m}, Z^{\e}_{0 m})$. Then we consider $\bar{v}^{\e}_{\delta m}=
(v^{\e}_{\delta m} , Z^{\e}_{\delta m})$, with
\begin{eqnarray}\label{jm}
v^{\e}_{\delta m} (t , x)  = \sum_{j=1}^{m} v^{\e}_{\delta mj}(t)
\Phi_{j}(x),
\qquad  Z^{\e}_{\delta m}(t , x)  = \sum_{j=1}^{m} Z^{\e}_{\delta mj}(t)
\psi_{j}(x)
\end{eqnarray}
such that
\begin{eqnarray}\label{cal(Pm)}
&&({\partial \bar{v}^{\e}_{\delta m}\over \partial t} , \Theta_{i})  +
 a(\bar{v}^{\e}_{\delta m} , \Theta_{i})
+B(\bar{v}^{\e}_{\delta m} , \bar{v}^{\e}_{\delta m} , \Theta_{i})
+{1\over 2}
(v^{\e}_{\delta m}{\rm div \,} v^{\e}_{\delta m} \, ,  \, \Phi_{i})
\nonumber\\
&&+{1\over 2}(Z_{\delta m}{\rm div \,} v^{\e}_{\delta m} \, ,  \, \psi_{i})
+{1\over \delta}({\rm div } v^{\e}_{\delta m} \, , \,  {\rm div
}\Phi_{i})
= ({\cal F}(\bar{v}^{\e}_{\delta m})  , \Theta_{i})
- {\cal R}(\bar{v}^{\e}_{\delta m} , \Theta_{i})
\nonumber\\ &&\forall  \Theta_i =
(\Phi_i, \psi_i), \   1\leq i\leq
m,
\\
&& {\bar{v}^{\e}_{\delta m}}(0)=   {\bar{v}}^{\e}_{0m }.\label{cal(Pcm)}
\end{eqnarray}
By taking $\psi_{i}= 0$ in (\ref{cal(Pm)}) we deduce
\begin{eqnarray}\label{Pm1}
({\partial v^{\e}_{\delta m}\over \partial t} , \Phi_{i})  &+&
(\nu+\nu_{r}) (\nabla v^{\e}_{\delta m} , \nabla \Phi_{i})
+b(v^{\e}_{\delta m} , v^{\e}_{\delta m} , \Phi_{i})
+{1\over 2}(v^{\e}_{\delta m}{\rm div } v^{\e}_{\delta m} \,  ,  \,
\Phi_{i})
\nonumber\\
&+&{1\over \delta}({\rm div \, } v^{\e}_{\delta m} \, , \,  {\rm div }\Phi_{i})
= ({\cal F}_{1}(v^{\e}_{\delta m})  , \Phi_{i})
+ 2\nu_{r} ({\rm rot \,} Z^{\e}_{\delta m} , \Phi_{i}) \qquad   1\leq i\leq
m, \qquad
\\
&& v^{\e}_{\delta m} (0)=  v^{\e}_{0m } ,\label{cal(Pcmv)}
\end{eqnarray}
and  by taking $\Phi_{i}= 0$ in  (\ref{cal(Pm)})  we deduce
\begin{eqnarray}\label{Pm2}
({\partial Z^{\e}_{\delta m}  \over \partial t} , \psi_{i}) &+&
 \alpha (\nabla Z^{\e}_{\delta m} , \nabla \psi_{i})
+b_{1}(v^{\e}_{\delta m} , Z^{\e}_{\delta m} , \psi_{i})
+{1\over 2}
(Z^{\e}_{\delta m}{\rm div \,} v^{\e}_{\delta m} \, ,   \psi_{i})
= ({\cal F}_{2}(v^{\e}_{\delta m})  , \psi_{i})
\nonumber\\
&&+2\nu_{r} ({\rm rot \,}  v^{\e}_{\delta m} , \psi_{i})
- 4\nu_{r} (Z^{\e}_{\delta m} \, ,  \,  \psi_{i})  \qquad
1\leq i\leq m,\qquad
\\
&& Z^{\e}_{\delta}(0)=  Z^{\e}_{0 m} , \label{cal(Pcmz)}
\end{eqnarray}
where
\begin{eqnarray}\label{F}
 ({\cal F}_{1}(v^{\e}_{\delta m})  ,   \Phi_{i})&=&
 -(\nu+\nu_{r}) (\nabla U^{\e}e_{1} , \nabla \Phi_{i})
 - b(U^{\e} e_{1}\, ,\,  v^{\e}_{\delta m} \, ,\, \Phi_{i})
- b(v^{\e}_{\delta m} \, ,\, U^{\e}_{\delta}e_{1}\, ,\, \Phi_{i})
\nonumber\\
&&+ 2\nu_{r}({\partial  W^{\e} \over \partial z_{2}}e_{1}     \, ,\,
\Phi_{i})
- ({\partial U^{\e}    \over \partial t}e_{1}  \, ,\, \Phi_{i})
 + (f^{\e} \, ,\, \Phi_{i}),
\end{eqnarray}
and
\begin{eqnarray}\label{FZ}
 ({\cal F}_{2}(v^{\e}_{\delta m})  , \psi_{i})&=&
 -\alpha (\nabla W^{\e} , \nabla \psi_{i})
 - b_{1}(U^{\e}e_{1} \, ,\, Z^{\e} \, ,\,   \psi_{i})
- b_{1}(v^{\e}_{\delta m} \, ,\,  W^{\e}\, ,\, \psi_{i})
\nonumber\\
&& - 2\nu_{r}(  {\partial  U^{\e} \over \partial z_{2}}    \, ,\, \psi_{i})  -4
\nu_{r} (W^{\e} \, ,\, \psi_{i})
- ({\partial W^{\e}    \over \partial t}  \, ,\, \psi_{i})
 + (g^{\e} \, ,\, \psi_{i}).
\end{eqnarray}

Taking  (\ref{jm}) into account, we deduce from (\ref{Pm1})-(\ref{FZ}) a
system of (nonlinear) differential equations for the unknown scalar functions
  $( v^{\e}_{\delta  m i}, Z^{\e}_{\delta m i})_{1 \le i \le m}$, which
possesses an unique maximal solution in $(H^1(0, T_m))^m$ with $T_m \in (0, T]$.

\smallskip
In order to prove that this solution is defined on the whole time interval
$[0,T]$, we will establish some a priori estimates for $v^{\e}_{\delta m}$
and  $Z^{\e}_{\delta m}$, independently of $m$.
More precisely,
 we  multiply the two sides of (\ref{Pm1}) by  $v^{\e}_{\delta mi}(t)$
and  the two sides of (\ref{Pm2}) by $Z^{\e}_{\delta mi}(t)$, then we sum for
$i$ from $1$ to
$m$, to get,  with $\|\cdot\|=\|\cdot\|_{L^{2}(\Omega^{\e})}$, the following
equations
\begin{eqnarray}\label{Pm1ss}
{1\over 2}{\partial\over \partial t} (\|v^{\e}_{\delta m}\|^{2})   &+&
(\nu+\nu_{r}) \|\nabla v^{\e}_{\delta m}\|^{2}
+b(v^{\e}_{\delta m} , v^{\e}_{\delta m} ,  v^{\e}_{\delta m})
+{1\over 2}(v^{\e}_{\delta m}{\rm div } v^{\e}_{\delta m} \,  ,  \,
v^{\e}_{\delta m})
+{1\over \delta}\|{\rm div } v^{\e}_{\delta m}\|^{2}
\nonumber\\
&&= ({\cal F}_{1}(v^{\e}_{\delta m})  ,  v^{\e}_{\delta m})
+ 2\nu_{r} ({\rm rot \,}  Z^{\e}_{\delta m} ,  v^{\e}_{\delta m}),\qquad
\end{eqnarray}
\begin{eqnarray}\label{Pm2ss}
{1\over 2}{\partial \over \partial t} ( \||Z^{\e}_{\delta m}\|^{2}) &+&
 \alpha \|\nabla Z^{\e}_{\delta m}\|^{2}
+b_{1}(v^{\e}_{\delta m} , Z^{\e}_{\delta m} , Z^{\e}_{\delta m})
+{1\over 2}
(Z^{\e}_{\delta m}{\rm div \,} v^{\e}_{\delta m} \, ,  Z^{\e}_{\delta m})
= ({\cal F}_{2}(v^{\e}_{\delta m})  , Z^{\e}_{\delta m})
\nonumber\\
&&+2\nu_{r} ({\rm rot \,}   v^{\e}_{\delta m} , Z^{\e}_{\delta m})
- 4\nu_{r} (Z^{\e}_{\delta m} \, ,  \,  Z^{\e}_{\delta m}) .
\end{eqnarray}
By integration by parts and using the boundary conditions
(\ref{eqn:er2.8a})-(\ref{eqn:er2.11b}), we obtain that
\begin{eqnarray*}
b(v^{\e}_{\delta m} , v^{\e}_{\delta m} ,  v^{\e}_{\delta m})
+b_{1}(v^{\e}_{\delta m} , Z^{\e}_{\delta m} , Z^{\e}_{\delta m})
+{1\over 2}(v^{\e}_{\delta m}{\rm div } v^{\e}_{\delta m} \,  ,  \,
v^{\e}_{\delta m})
+{1\over 2}
(Z^{\e}_{\delta m}{\rm div \,} v^{\e}_{\delta m} \, ,  Z^{\e}_{\delta m})
= 0,
\end{eqnarray*}
and
\begin{eqnarray*}
 b(U^{\e} e_{1}\, ,\,  v^{\e}_{\delta m} \, ,\, v^{\e}_{\delta m})
+ b_{1}(U^{\e}e_{1} \, ,\, Z^{\e}_{\delta m}  \, ,\,   Z^{\e}_{\delta m})
 =  0.
\end{eqnarray*}
Thus by the addition of (\ref{Pm1ss}) and (\ref{Pm2ss}) we obtain
\begin{eqnarray}\label{eq2.28E}
\qquad\qquad {1\over 2}{\partial\over \partial t} (\|v^{\e}_{\delta m}\|^{2}
+\||Z^{\e}_{\delta m}\|^{2})
+(\nu+\nu_{r}) \|\nabla v^{\e}_{\delta m}\|^{2} + \alpha \|\nabla
Z^{\e}_{\delta m}\|^{2}
+{1\over \delta}\|{\rm div } v^{\e}_{\delta m}\|^{2}
= \Xi
\end{eqnarray}
with
 \begin{eqnarray*}\label{Xi}
\Xi&=&
 2\nu_{r} ({\rm rot \,} Z^{\e}_{\delta m} ,  v^{\e}_{\delta m})
+2\nu_{r} ({\rm rot \,}  v^{\e}_{\delta m} , Z^{\e}_{\delta m})
- 4\nu_{r} \|Z^{\e}_{\delta m}\|^{2}
 -(\nu+\nu_{r}) (\nabla U^{\e}e_{1} , \nabla v^{\e}_{\delta m})
 \nonumber\\
&&
-\alpha (\nabla W^{\e} , \nabla Z^{\e}_{\delta m})
- b(v^{\e}_{\delta m} \, ,\, U^{\e}e_{1}\, ,\, v^{\e}_{\delta m})
 - b_{1}(v^{\e}_{\delta m} \, ,\,  W^{\e}\, ,\, Z^{\e}_{\delta m})
+ 2\nu_{r}({\rm rot \,} W^{\e}  \, ,\,  v^{\e}_{\delta m})
\nonumber\\
&&
+ 2\nu_{r}({\rm rot \,} U^{\e}e_{1}\, ,\, Z^{\e}_{\delta m})
 -4 \nu_{r} (W^{\e} \, ,\, Z^{\e}_{\delta m})
- ({\partial U^{\e} e_{1}   \over \partial t}  \, ,\,  v^{\e}_{\delta m})
- ({\partial W^{\e}   \over \partial t}  \, ,\,  Z^{\e}_{\delta m})
\nonumber\\
&&
 + (f^{\e} \, ,\,   v^{\e}_{\delta m})
 + (g^{\e} \, ,\, Z^{\e}_{\delta m}).
\end{eqnarray*}
Using Young's inequality we have
\begin{eqnarray*}\label{esR}
2 \nu_{r}|({\rm rot \,} {Z^{\e}_{\delta m}} , {v^{\e}_{\delta m}})|
\leq
2 \nu_{r}\|{\rm rot \,} {Z^{\e}_{\delta m}}\| \|{v^{\e}_{\delta m}}\|
\leq {\aal\over 4}\|\nabla {Z^{\e}_{\delta m}}\|^{2} +  {4\nu_{r}^{2}\over
\aal}\| {v^{\e}_{\delta m}}\|^{2},
\end{eqnarray*}
\begin{eqnarray*}
 2\nu_{r} | ( {\rm rot \,}  v^{\e}_{\delta m} , Z^{\e}_{\delta m}) |
&& \leq
2 \nu_{r}\|{\rm rot \,} {v^{\e}_{\delta m}}\|\|{Z^{\e}_{\delta m}}\|
\leq {\nu_{r}\over 4} \| {\rm rot \,}  {v^{\e}_{\delta m}}\|^{2}
+ 4\nu_{r}\|{Z^{\e}_{\delta m}}\|^{2} \\
&& \leq {\nu_{r}\over 2} \|\nabla {v^{\e}_{\delta m}}\|^{2}
+ 4\nu_{r}\|{Z^{\e}_{\delta m}}\|^{2},
\end{eqnarray*}
\begin{eqnarray*}\label{aa}
 (\nabla U^{\e}e_{1} , \nabla v^{\e}_{\delta m})
\leq
{1\over 2}\|\nabla ({v^{\e}_{\delta m}})\|^{2}
+ {1\over 2}\|{\partial U^{\e}\over \partial z_{2}}\|^{2},
\end{eqnarray*}
\begin{eqnarray*}\label{al}
(\nabla W^{\e} , \nabla Z^{\e}_{\delta m})
\leq
{1 \over 4}\|\nabla {Z^{\e}_{\delta m}}\|^{2}
+ \|{\partial W^{\e}\over \partial z_{2}}\|^{2},
\end{eqnarray*}
\begin{eqnarray*}\label{bb}
 b(v^{\e}_{\delta m} \, ,\, U^{\e} e_{1}\, ,\, v^{\e}_{\delta m})
\leq
\|({v^{\e}_{\delta m}})_{2}\|
\|{\partial U^{\e}\over \partial z_{2}}\|_{\infty}
\|({v^{\e}_{\delta m}})_{1}\|
\leq \|{\partial U^{\e}\over \partial z_{2}}\|_{\infty}\|{v^{\e}_{\delta
m}}\|^{2},
\end{eqnarray*}
\begin{eqnarray*}\label{bb1}
  b_{1}(v^{\e}_{\delta m} \, ,\,  W^{\e}\, ,\, Z^{\e}_{\delta m})
\leq \|({v^{\e}_{\delta m}})_{2}\|\|{\partial W^{\e} \over \partial
z_{2}}\|_{\infty}
\|Z^{\e}_{\delta m}\|
\leq \frac{1}{2}  \|{\partial W^{\e} \over \partial z_{2}}\|_{\infty}\left(
   \|{v^{\e}_{\delta m}}\|^{2} +  \|Z^{\e}_{\delta m}\|^{2}\right),
\end{eqnarray*}
\begin{eqnarray*}\label{rr}
&& 2\nu_{r}( {\rm rot \,} W^{\e}  \, ,\,  v^{\e}_{\delta m})
+ 2\nu_{r}({\rm rot \,}  U^{\e}e_{1}\, ,\, Z^{\e}_{\delta m})
 -4 \nu_{r} (W^{\e} \, ,\, Z^{\e}_{\delta m})
\nonumber\\
&=& + 2 \nu_{r} \left({\partial W^{\e} \over \partial z_{2}} ,  {(v^{\e}_{\delta
m})}_{1}\right)
    -   2 \nu_{r}   \left({\partial U^{\e} \over \partial z_{2}} ,
Z^{\e}_{\delta
m}\right)
- 4\nu_{r} (W^{\e} , Z^{\e}_{\delta m})
\nonumber\\
&\leq&    \nu_{r}\|{v^{\e}_{\delta m}}\|^{2} + 2 \nu_{r}\|Z^{\e}_{\delta
m}\|^{2} +
 \nu_{r}\| {\partial W^{\e} \over \partial z_{2}}\|^{2} +
 \nu_{r}\|{\partial U^{\e} \over \partial z_{2}}\|^{2}  + 4\nu_{r}\|
W^{\e}\|^{2}.
\end{eqnarray*}
So we have
 \begin{eqnarray}\label{XiMag}
\Xi&\leq&
({\nu\over 2}+\nu_{r})\|\nabla{v^{\e}_{\delta m}}\|^{2}
+ {\alpha\over 2}\|\nabla{Z^{\e}_{\delta m}}\|^{2}
+\left(2+ 4 \nu_{r} + \frac{1}{2} \|{\partial W^{\e}(t) \over \partial
z_{2}}\|_{\infty}
\right)
\|{Z^{\e}_{\delta m}}\|^{2}
\nonumber\\
&&+\left(2+ \nu_{r} + {4\nu_{r}^2\over \alpha} +\|{\partial U^{\e}(t)\over
\partial z_{2}}\|_{\infty}
+ \frac{1}{2} \|{\partial W^{\e}(t) \over \partial z_{2}}\|_{\infty}
\right)\| {v^{\e}_{\delta m}}\|^{2}
\nonumber\\
&&+{(\nu+\nu_{r})\over 2}\|{\partial U^{\e}(t)\over \partial z_{2}}\|^{2}
+ \alpha\|{\partial W^{\e}(t)\over \partial z_{2}}\|^{2}
+
 \nu_{r}\| {\partial W^{\e}(t) \over \partial z_{2}}\|^{2}
+\nu_{r}\|{\partial U^{\e}(t) \over \partial z_{2}}\|^{2}
\nonumber\\ &&
+
  4\nu_{r}\| W^{\e}(t)\|^{2}
  +\left\|{\partial U^{\e} (t) \over \partial t}\right\|^{2}
+\left\|{\partial W^{\e}(t)  \over \partial t}\right\|^{2}
  + \|f^{\e}(t)\|^{2} + \|g^{\e}(t)\|^{2}.
\end{eqnarray}
From  (\ref{eq2.28E})-(\ref{XiMag}),  we get
\begin{eqnarray}\label{eq2.31}
{1\over 2}{\partial \over\partial t}(
[\bar{v}^{\e}_{\delta m}]^{2})
+
{k\over 2}[[\bar{v}^{\e}_{\delta  m}]]^{2}
+{1\over \delta}\|{\rm div\, } v^{\e}_{\delta m}\|^{2}
\leq A(t) [\bar{v}^{\e}_{\delta m}]^{2} + B (t),
\end{eqnarray}
where   $k= \min \{\nu ,
\alpha\}$ and
$A$ and $B$ belong to $L^1(0,T)$ such that $A(t) \ge 2$ and $B(t) \ge 0$ almost
everywhere on $[0,T]$. Moreover $A$  and
$B$   depend neither on $m$ nor on $\delta$.

For any $t \in (0,T_m)$ we can
  integrate  the inequality  (\ref{eq2.31}) over $[0,t]$:
 we obtain
\begin{eqnarray}\label{eqint2.31}
[\bar{v}^{\e}_{\delta m}(t)]^{2} +
k \int_{0}^{t} [[\bar{v}^{\e}_{\delta m}(s)]]^{2} ds
+{2\over \delta}\int_{0}^{t} \|{\rm div\, } v^{\e}_{\delta m}(s)\|^{2}ds
\leq [\bar{v}^{\e}_{0}]^{2} \nonumber\\+ 2\int_{0}^{t} A(t)[\bar{v^{\e}_{\delta
m}}(s)]^{2}ds + 2{\cal B},
 \end{eqnarray}
 with ${\cal B}= \int_{0}^{T}B(t) dt$.
So by Gr\"onwall's inequality,  we deduce first that
$$ [\bar{v}^{\e}_{\delta m}(t)]^{2}
\leq ( [\bar{v}^{\e}_{0}]^{2} + 2 {\cal B})  e^{2 {\cal A}}
 \quad \mbox{with}\quad {\cal A}= \int_{0}^{T}A(t) dt.$$
Thus $\bar{v}^{\e}_{\delta m}$ is defined on the whole interval $[0,T]$ and
\begin{eqnarray}\label{Maj1}
\sup_{t\in [0 , T]}[\bar{v}^{\e}_{\delta m}(t)]^{2} \leq C.
\end{eqnarray}
Then  from (\ref{eqint2.31}) and (\ref{Maj1}), we deduce
\begin{eqnarray}\label{Maj2}
{1\over \delta}\int_{0}^{T} \|{\rm div\, } v^{\e}_{\delta m}(t)\|^{2}dt \le C,
\quad \int_{0}^{T} [[\bar{v}^{\e}_{\delta m}(t)]]^{2} dt\leq C,
 \end{eqnarray}
where here and in what follows  $C's$ denotes various constants
 which   depend neither on $m$ nor on $\delta$.

 We need now  to look at the time derivative of  $v^{\e}_{\delta
m}$ and  $Z^{\e}_{\delta m}$.
Let $\Theta^{\e}=(\varphi^{\e} , \psi^{\e}) \in (H^1_0(\Om^{\e}))^2 \times
H^1_0(\Om^{\e}) \subset  V^{\e}\times H^{1,
\e}$.   There exists  a sequence $(q^{\e}_{i} , k^{\e}_{i})_{i \ge 1}$ in
$\br^{2}$
such that
$$ \Theta^{\e}_{p} = (\varphi^{\e}_{p}, \psi^{\e}_{p})
 \to  (\varphi^{\e} , \psi^{\e}) \quad \mbox{ strongly in } V^{\e}\times H^{1,
\e} $$
with
$$
\varphi^{\e}_{p} = \sum_{i=1}^{p}q^{\e}_{i}\Phi_{i},
\quad \psi^{\e}_{p} = \sum_{i=1}^{p}k^{\e}_{i}\psi_{i} \qquad  \forall p \ge 1.
 $$
Let $p \ge m$. Reminding that $(\Phi_i)_{i \ge 1}$ and $(\psi_i)_{i \ge 1}$ are
orthogonal bases of $(L^2(\Om^{\e}))^2$ and $L^2(\Om^{\e})$ respectively, we get
\begin{eqnarray*}
\left( \frac{\partial v^{\e}_{\delta m}}{\partial t}, \varphi^{\e}_p \right) =
\sum_{j=1}^m ( v^{\e}_{\delta m j})' (t) (\Phi_j, \varphi^{\e}_p) =
\sum_{j=1}^m ( v^{\e}_{\delta m j})' (t) (\Phi_j, \varphi^{\e}_m) = \left(
\frac{\partial v^{\e}_{\delta m}}{\partial t}, \varphi^{\e}_m \right),
\end{eqnarray*}
and
\begin{eqnarray*}
\left( \frac{\partial Z^{\e}_{\delta m}}{\partial t}, \psi^{\e}_p \right) =
\sum_{j=1}^m ( Z^{\e}_{\delta m j})' (t) (\Phi_j, \psi^{\e}_p) =  \sum_{j=1}^m (
Z^{\e}_{\delta m j})' (t) (\Phi_j, \psi^{\e}_m) = \left( \frac{\partial
Z^{\e}_{\delta m}}{\partial t}, \psi^{\e}_m \right).
\end{eqnarray*}
Since $ \frac{\partial \bar v^{\e}_{\delta m}}{\partial t} \in L^2(0,T; V^{\e}
\times H^{1, \e})$, we can pass to the limit as $p$ tends to $+ \infty$ i.e
\begin{eqnarray*}
\left( \frac{\partial v^{\e}_{\delta m}}{\partial t}, \varphi^{\e} \right) =
\left( \frac{\partial v^{\e}_{\delta m}}{\partial t}, \varphi^{\e}_m \right),
\quad \left( \frac{\partial Z^{\e}_{\delta m}}{\partial t}, \psi^{\e} \right) =
\left( \frac{\partial Z^{\e}_{\delta m}}{\partial t}, \varphi^{\e}_m \right).
\end{eqnarray*}
Then, by using  Green's formula and (\ref{Pm1})
 \begin{eqnarray} \label{dg1}
\left( {\partial { v^{\e}_{\delta m}} \over \partial t}, \varphi^{\e} \right)
=\Bigl(  (\nu +\nu_{r})\Delta { v^{\e}_{\delta m}}
-  ({ v^{\e}_{\delta m}}  \cdot \nabla) { v^{\e}_{\delta m}}
-{1\over 2} { v^{\e}_{\delta m}}{\rm div\,}{ v^{\e}_{\delta m}}
\nonumber\\ + {\cal F}_{1}({ v^{\e}_{\delta m}})+ 2\nu_{r}{\rm
rot\,}{Z^{\e}_{\delta m}}
 + {1\over \delta} \nabla ({\rm div\,}{ v^{\e}_{\delta m}}), \varphi^{\e}_m
\Bigr),
\end{eqnarray}
and from  (\ref{Pm2})
\begin{eqnarray} \label{dg2}
 \Bigl( {\partial Z^{\e}_{\delta m} \over \partial t}, \psi^{\e} \Bigr)  =
\Bigl( \alpha\Delta {
Z^{\e}_{\delta m}}
-  ({ v^{\e}_{\delta m}}  \cdot \nabla) { Z^{\e}_{\delta m}}
-{1\over 2} { Z^{\e}_{\delta m}}{\rm div\,}{ v^{\e}_{\delta m}}
+ {\cal F}_{2}({ v^{\e}_{\delta m}})+ 2\nu_{r}{\rm rot\,}{v^{\e}_{\delta m}}
\nonumber \\
 -
4\nu_{r} Z^{\e}_{\delta m},\psi^{\e}_m \Bigr)
\end{eqnarray}
and from (\ref{F})
$${\cal F}_{1}({ v^{\e}_{\delta m}})= (\nu+\nu_{r}) {\partial^{2} U^{\e}\over
\partial z^{2}_{2}}e_{1}
   - U^{\e}{\partial { v^{\e}_{\delta m}} \over \partial z_{1}}
   - ({ v^{\e}_{\delta m}})_{2} {\partial U^{\e}\over \partial z_{2}}e_{1}
 + 2\nu_{r}  {\partial W^{\e}\over \partial z_{2}} e_{1}
 - {\partial  U^{\e}\over \partial t}e_{1}+ f^{\e},$$
and from (\ref{FZ})
$${\cal F}_{2}({ v^{\e}_{\delta m}})=  \alpha {\partial^{2} W^{\e}\over
\partial z^{2}_{2}}
   - U^{\e}{\partial { Z^{\e}_{\delta m}} \over \partial z_{1}}
   - ({ v^{\e}_{\delta m}})_{2} {\partial W^{\e}\over \partial z_{2}}
 + 2\nu_{r}  {\partial U^{\e}\over \partial z_{2}}  - 4 \nu_{r} W^{\e}-
{\partial  W^{\e}\over \partial t} + g^{\e}.
$$
As ${v^{\e}_{\delta m}}$ is bounded in $L^{2}(0 , T ; (H^{1}(\Omega^{\e})^{2}))$
independently of $m$ and $\delta$,  then
$\Delta { v^{\e}_{\delta m}}$ and
$\nabla ({\rm div\,}{ v^{\e}_{\delta m}})$ are also bounded in  $L^{2}(0 , T ;
(H^{-1}(\Omega^{\e}))^{2})$  independently of $m$ and $\delta$.
 Similarly, since   ${Z^{\e}_{\delta m}}$ is bounded  in $L^{2}(0 , T ;
H^{1}(\Omega^{\e}))$  independently of $m$ and $\delta$, then
 ${\rm rot\,}{Z^{\e}_{\delta m}}$ is also bounded in $L^{2}(0 , T ;
(L^{2}(\Omega^{\e}))^{2})$  independently of $m$ and $\delta$.
By assumption,  $f^{\e} \in (L^{2}((0 , T)\times\Om^{\e})^{2}$,  $g^{\e} \in
L^{2}((0 ,
T)\times\Om^{\e})$, and from Lemma \ref{lUW},  $U^{\e}$ and  $W^{\e}$ belong to
$H^{1}(0 , T)\times {\cal
D}((-\infty , h_{m}))$.
Thus we infer  that
${\cal F}_{1}({ v^{\e}_{\delta m}})$ and ${\cal F}_{2}({ v^{\e}_{\delta m}})$
are  bounded in  $L^{2}(0 , T;(L^2(\Omega^{\e}))^{2})$ and $L^{2}(0 ,
T;L^2(\Omega^{\e})$, independently of $m$ and $\delta$.
Moreover let $\varphi\in ( H^{1}(\Omega^{\e}))^2$, we have
 \begin{eqnarray*}
  | (({ v^{\e}_{\delta m}} \cdot \nabla){ v^{\e}_{\delta m}} \, ,\,
\varphi)|
 \leq \|{ v^{\e}_{\delta m}}\|_{L^{3}(\Omega^{\e})}\|\nabla { v^{\e}_{\delta
m}}\|_{L^{2}(\Omega^{\e})}\|\varphi\|_{L^{6}(\Omega^{\e})}.
 \end{eqnarray*}
Using now the classical inequality
$$
\|u\|_{L^{3}(\Omega^{\e})}\leq
\|u\|^{1/2}_{L^{2}(\Omega^{\e})}\|u\|^{1/2}_{L^{6}(\Omega^{\e})}
\quad \forall u\in  L^{6}(\Om^{\e}),
$$
and the  continuous injection of $H^{1}(\Omega^{\e})$ in
$L^{6}(\Omega^{\e})$,
 there  exists a constant $C$ such that
 \begin{eqnarray*}
    | (({ v^{\e}_{\delta m}} \cdot \nabla) { v^{\e}_{\delta m}} \, ,\,
\varphi)|
\leq \left( C\|{ v^{\e}_{\delta m}}\|^{1/2}_{L^{2}(\Omega^{\e})}\|\nabla {
v^{\e}_{\delta m}}\|^{3/2}_{L^{2}(\Omega^{\e})}\right)
\|\varphi\|_{H^{1}(\Omega^{\e})}.
 \end{eqnarray*}
So we get
 \begin{eqnarray*}
\|({ v^{\e}_{\delta m}} \cdot \nabla) { v^{\e}_{\delta
m}}\|_{(H^{1}(\Omega^{\e}))'}
\leq C\|{ v^{\e}_{\delta m}}\|^{1/2}_{L^{2}(\Omega^{\e})}\|\nabla {
v^{\e}_{\delta m}}\|^{3/2}_{L^{2}(\Omega^{\e})}
 \end{eqnarray*}
then
\begin{eqnarray*}
\int_{0}^{T}\| ({ v^{\e}_{\delta m}} \cdot \nabla) { v^{\e}_{\delta
m}}\|^{4/3}_{(H^{1}(\Omega^{\e}))'} dt
&\leq& C^{4/3}\int_{0}^{T}\|{ v^{\e}_{\delta
m}}\|^{2/3}_{L^{2}(\Omega^{\e})}\|\nabla { v^{\e}_{\delta
m}}\|^{2}_{L^{2}(\Omega^{\e})}
\nonumber\\
 &\leq&  C^{4/3}\|{ v^{\e}_{\delta m}}\|^{2/3}_{L^{\infty}(0 , T ;
L^{2}(\Omega^{\e}))}\|\nabla { v^{\e}_{\delta m}}\|^{2}_{L^{2}((0 ,
T)\times\Omega^{\e})}.
 \end{eqnarray*}
With the same arguments,  we  deduce  similar result  for
${v^{\e}_{\delta m}}{\rm div\,}{ v^{\e}_{\delta m}}$,
 $({ v^{\e}_{\delta m}}  \cdot \nabla) { Z^{\e}_{\delta m}}$
and  ${ Z^{\e}_{\delta m}}{\rm div\,}{ v^{\e}_{\delta m}}$.
Finally, recalling that $(\Phi_i)_{i \ge 1}$ and $(\psi_i)_{i \ge 1}$ are
$H^1$-orthonormal, we get
\begin{eqnarray*}
\| \varphi^{\e}_m \|_{(H^1(\Om^{\e}))^2} \le \| \varphi^{\e}
\|_{(H^1(\Om^{\e}))^2}, \quad \| \psi^{\e}_m \|_{H^1(\Om^{\e})} \le \|
\psi^{\e} \|_{H^1(\Om^{\e})} \quad \forall m \ge 1.
\end{eqnarray*}
So from (\ref{dg1}) and (\ref{dg2}) we see that there exists a constant $C$ such
that
\begin{eqnarray}\label{boudv}
\|{\partial {v^{\e}_{\delta m}} \over\partial t}
\|_{L^{4/3}(0 , T ;( H^{-1}(\Omega^{\e}))^{2})}\leq C,
\quad
\|{\partial {Z^{\e}_{\delta m}} \over  \partial t}
\|_{L^{4/3}(0 , T ; H^{-1}(\Omega^{\e}))}\leq C.
\end{eqnarray}
From the estimates  (\ref{Maj1})-(\ref{Maj2}) we infer that there exists a
subsequence
(denoted also by)  $\bar{v}^{\e}_{\delta m}$ such that
\begin{eqnarray}\label{lim1m}
 \bar{v}^{\e}_{\delta m} \tow \bar{v}^{\e}_{\delta}  \quad \mbox{  in }
L^{2}(0 , T; V^{\e})\times L^{2}(0 , T; H^{1 , \e}) \quad \mbox{  weakly for }
m \to +\infty,
\end{eqnarray}
\begin{eqnarray}\label{lim2m}
 \bar{v}^{\e}_{\delta m} \tow \bar{v}^{\e}_{\delta}  \quad \mbox{  in }
L^{\infty}(0 , T;  H^{\e})\times L^{\infty}(0 , T; H^{0 , \e}) \quad \mbox{weak
star for }
 m \to +\infty,
\end{eqnarray}
and from (\ref{boudv}),   by  Aubin's compactness theorem A.11 in \cite{aubin},
  there are two subsequences (denoted also by) ${v^{\e}_{\delta m}}$,
${Z^{\e}_{\delta m}}$  satisfying for $m\to +\infty$ the following strong
convergence
\begin{eqnarray}\label{cfVZm}
 v^{\e}_{\delta m} \to v^{\e}_{\delta}  \mbox{ in }   L^{2}(0 , T;
(L^{4}(\Omega^{\e}))^{2}), \quad
 Z^{\e}_{\delta m}\to Z^{\e}_{\delta} \mbox{ in }
 L^{2}(0 , T;
L^{4}(\Omega^{\e})).
\end{eqnarray}


In order to pass to the limit  as $m  \to  +\infty$,
we remind that for any  $\Theta^{\e}=(\varphi^{\e} , \psi^{\e}) \in V^{\e}\times
H^{1,
\e}$,  there exists  a sequence $(q^{\e}_{i} , k^{\e}_{i})_{i \ge 1}$ in
$\br^{2}$
such that
$$ \Theta^{\e}_{m} = (\varphi^{\e}_{m}, \psi^{\e}_{m})
 \to  (\varphi^{\e} , \psi^{\e}) \quad \mbox{ strongly in } V^{\e}\times H^{1,
\e} $$
with
$$
\varphi^{\e}_{m} = \sum_{i=1}^{m}q^{\e}_{i}\Phi_{i},
\quad \psi^{\e}_{m} = \sum_{i=1}^{m}k^{\e}_{i}\psi_{i} \qquad  \forall m \ge 1.
 $$
We multiply first  the two sides of (\ref{Pm1}) by $q^{\e}_{i}$ then
we sum for
$i=1$ to $m$, and we multiply the two sides of (\ref{Pm2}) by $k^{\e}_{i}$
then we sum also  for
$i=1$ to $m$, we obtain
\begin{eqnarray}\label{Pm1n}
({\partial v^{\e}_{\delta m}\over \partial t} , \varphi^{\e}_{m})  &+&
(\nu+\nu_{r}) (\nabla v^{\e}_{\delta m} , \nabla \varphi^{\e}_{m})
+b(v^{\e}_{\delta m} , v^{\e}_{\delta m} , \varphi^{\e}_{m})
+{1\over 2}(v^{\e}_{\delta m}{\rm div } v^{\e}_{\delta m} \,  ,  \,
\varphi^{\e}_{m})
\nonumber\\
&&
+{1\over \delta}({\rm div } v^{\e}_{\delta m} \, , \,  {\rm div
}\varphi^{\e}_{m})
= ({\cal F}_{1}(v^{\e}_{\delta m})  , \varphi^{\e}_{m})
+ 2\nu_{r} ({\rm rot \,} \, Z^{\e}_{\delta m} , \varphi^{\e}_{m}),
\\
&& v^{\e}_{\delta m} (0)=  v^{\e}_{0 m} , \label{cal(Pcmvn)}
\end{eqnarray}
and
\begin{eqnarray}\label{Pm2n}
({\partial Z^{\e}_{\delta m}  \over \partial t} , \psi^{\e}_{m}) &+&
 \alpha (\nabla Z^{\e}_{\delta m} , \nabla \psi^{\e}_{m})
+b_{1}(v^{\e}_{\delta m} , Z^{\e}_{\delta m} , \psi^{\e}_{m})
+{1\over 2}
(Z^{\e}_{\delta m}{\rm div \,} v^{\e}_{\delta m} \, ,   \psi^{\e}_{m})
\nonumber\\
&&
= ({\cal F}_{2}(v^{\e}_{\delta m})  , \psi^{\e}_{m})
+2\nu_{r} ({\rm rot \,}  v^{\e}_{\delta m} , \psi^{\e}_{m})
- 4\nu_{r} (Z^{\e}_{\delta m} \, ,  \,  \psi^{\e}_{m}),
\\
&& Z^{\e}_{\delta}(0)=  Z^{\e}_{0 m}.\label{cal(Pcmzn)}
\end{eqnarray}
Let  $\theta\in {\cal D}(0 , T)$,
we multiply (\ref{Pm1n}) and (\ref{Pm2n}) by $\theta(t)$ and we  integrate over
$[0,T]$.
We get
\begin{eqnarray}\label{cal(PPPm)}
-  \int_{0}^{T}(\bar{v}^{\e}_{\delta m} (t), \Theta^{\e}_{m})
\theta'(t) dt
+
\int_{0}^{T}  \left\{
 a(\bar{v}^{\e}_{\delta m} , \Theta^{\e}_{m})
+B(\bar{v}^{\e}_{\delta m} , \bar{v}^{\e}_{\delta m} ,
\Theta^{\e}_{m})\right\} \theta(t) dt
\nonumber\\
+{1\over\delta}\int_{0}^{T} ({\rm div\, } v^{\e}_{\delta m}  \, , \,  {\rm div
}\varphi^{\e}_{m}) \theta(t) dt
+{1\over 2}\int_{0}^{T}
\left\{v^{\e}_{\delta m}  {\rm div \,}  v^{\e}_{\delta m} \, ,  \,
\varphi^{\e}_{m})
+(Z^{\e}_{\delta}{\rm div \,}  v^{\e}_{\delta m} \, ,  \, \psi^{\e}_{m})
\right\} \theta(t) dt
\nonumber\\
= \int_{0}^{T}  \left\{  ({\cal F}(\bar{v}^{\e}_{\delta m})  ,
\Theta^{\e}_{m})
- {\cal R}(\bar{v}^{\e}_{\delta m} , \Theta^{\e}_{m}) \right\}
\theta(t) dt
.\quad
\end{eqnarray}
Using the convergences (\ref{lim1m})-(\ref{lim2m}),
we can now pass easily to the limit  in
all terms of (\ref{cal(PPPm)}) except for the nonlinear terms
$$ \int_{0}^{T}  B(\bar{v}^{\e}_{\delta m} , \bar{v}^{\e}_{\delta m} ,
\Theta^{\e}_{m}) \theta(t) dt
= \int_{0}^{T}  b({ v^{\e}_{\delta m}} , { v^{\e}_{\delta m}} ,
\varphi^{\e}_{m}) \theta(t) dt
+   \int_{0}^{T}  b_{1}({ v^{\e}_{\delta m}} , {Z^{\e}_{\delta}} ,
\psi^{\e}_{m}) \theta(t) dt$$
and
$$ \int_{0}^{T}
\left\{  v^{\e}_{\delta m}  {\rm div \,}  v^{\e}_{\delta m} \, ,  \,
\varphi^{\e}_{m})
+(Z^{\e}_{\delta}{\rm div \,}  v^{\e}_{\delta m} \, ,  \, \psi^{\e}_{m})
\right\} \theta(t) dt.$$
We have first
\begin{eqnarray} \label{nonltv}
\int_{0}^{T} b(v^{\e}_{\delta m} , v^{\e}_{\delta m} , \varphi^{\e}_{m}) \theta
(t) dt
&=& -   \int_{0}^{T} b(v^{\e}_{\delta m} , \varphi^{\e}_{m}, v^{\e}_{\delta
m} ) \theta(t)  dt
-\int_{0}^{T} ( {\rm div\,} {v^{\e}_{\delta m}} , \varphi^{\e}_{m}\cdot
{v^{\e}_{\delta m}} ) \theta(t)  dt
\nonumber\\
&&+ \int_{0}^{T} \int_{\partial\Om^{\e}}
 (\varphi^{\e}_{m}\cdot {v^{\e}_{\delta m}})({v^{\e}_{\delta m}}\cdot n)
\theta(t)
d\sigma dt.
\end{eqnarray}
Using the boundary conditions (\ref{eqn:er2.8a})-(\ref{eqn:er2.11b}),
we obtain that the last integral is equal to zero,
then for the first and the second integrals we use the strong convergence
(\ref{cfVZm}).
 So we get
\begin{eqnarray*}
\int_{0}^{T} b(v^{\e}_{\delta m} , v^{\e}_{\delta m} , \varphi^{\e}_{m}) \theta
(t) dt
\to
\int_{0}^{T} b(v^{\e}_{\delta} , v^{\e}_{\delta} , \varphi^{\e}) \theta (t) dt
\mbox{ for } m\to  +\infty.
\end{eqnarray*}
Similarly
\begin{eqnarray} \label{nonltZ}
\int_{0}^{T} b(v^{\e}_{\delta m} , Z^{\e}_{\delta m} , \psi^{\e}_{m}) \theta (t)
dt
&=& -   \int_{0}^{T} b(v^{\e}_{\delta m} ,  \psi^{\e}_{m} , Z^{\e}_{\delta m} )
\theta (t)
dt
-\int_{0}^{T} ( {\rm div\,} v^{\e}_{\delta m} , \psi^{\e}\cdot Z^{\e}_{\delta m}
) \theta (t) dt
\nonumber\\
&&+ \int_{0}^{T} \int_{\partial \Om^{\e}}
 (\psi^{\e}_{m}\cdot Z^{\e}_{\delta m})(v^{\e}_{\delta m}\cdot n) \theta (t)
d\sigma dt.
\end{eqnarray}
Using the boundary conditions (\ref{eqn:er2.8a})-(\ref{eqn:er2.11b}),
we obtain that the last integral is equal to zero,
then for the first and the second integrals we use the strong convergence
(\ref{cfVZm}). So we get
\begin{eqnarray*}
\int_{0}^{T} b(v^{\e}_{\delta m} , Z^{\e}_{\delta m} , \psi^{\e}_{m}) \theta (t)
 dt
\to
\int_{0}^{T} b(v^{\e}_{\delta} , \psi^{\e}, Z^{\e}_{\delta}) \theta (t) dt
\mbox{ for } m\to  +\infty.
\end{eqnarray*}
We can now pass to the limit ($m\to  +\infty$) in  all terms of
(\ref{cal(PPPm)})  to get
\begin{eqnarray}\label{cal(PPP1)}
\int_{0}^{T}  (v^{\e}_{\delta} , \Theta^{\e}) \theta'(t) dt =
 \int_{0}^{T}  \left\{
a( \bar{v}^{\e}_{\delta}  , \Theta^{\e})
+ B(\bar{v}^{\e}_{\delta}  , \bar{v}^{\e}_{\delta}  , \Theta^{\e})
+{1\over\delta} ({\rm div\,} v^{\e}_{\delta}   \, , \,  {\rm div
}\varphi^{\e})
\right\} \theta(t) dt
\nonumber\\
+\int_{0}^{T}\left\{
 {1\over 2}( v^{\e}_{\delta}{\rm div\,} v^{\e}_{\delta}   \, , \,
\varphi^{\e})
+ {1\over 2} ( Z^{\e}_{\delta} {\rm div\,} v^{\e}_{\delta} \, , \, \psi^{\e})
-
 ({\cal F}({\bar{v}^{\e}}_{\delta} ) ,\Theta^{\e})- {\cal
R}(\bar{v}^{\e}_{\delta}  ,\Theta^{\e})
              \right\} \theta(t) dt
\nonumber\\
\quad \forall\Theta^{\e}= (\varphi^{\e} , \psi^{\e}) \in V^{\e}\times
{H^{1,}}^{\e}, \qquad
\end{eqnarray}
 that is $\bar{v}^{\e}_{\delta}$ satisfy (\ref{cal(P)})
in ${\cal D}'(0 , T)$ (distribution sense).
Moreover as the two items between the brackets $\{ \}$, in the
right hand side
of (\ref{cal(PPP1)}), are in $L^{4/3}(0 , T)$,
we deduce that (\ref{cal(P)})
holds for almost every  $t \in (0 , T)$.


\smallskip

In the following we set
\begin{eqnarray*}
p^{\e}_{\delta}= -{1\over \delta} {\rm div \, }  v^{\e}_{\delta},
\end{eqnarray*}
then, rewrite (\ref{cal(P)})  as follows
\begin{eqnarray}\label{cal(PP)}
[{\partial \bar{v}^{\e}_{\delta}\over \partial t} , \Theta^{\e}] +
 a(\bar{v}^{\e}_{\delta} , \Theta^{\e})
+B(\bar{v}^{\e}_{\delta} , \bar{v}^{\e}_{\delta} , \Theta^{\e})
+{1\over 2}\left\{
( v^{\e}_{\delta}{\rm div \,}  v^{\e}_{\delta} \, ,  \, \varphi^{\e})
+(Z^{\e}_{\delta}{\rm div \,} v^{\e}_{\delta} \, ,  \, \psi^{\e}) \right\}
\nonumber\\
-(p^{\e}_{\delta} \, , \,  {\rm div }\varphi^{\e})
= ({\cal F}(\bar{v}^{\e}_{\delta})  , \Theta^{\e})
- {\cal R}(\bar{v}^{\e}_{\delta} , \Theta^{\e}) \qquad
\forall\Theta^{\e}= (\varphi^{\e} , \psi^{\e})
\in V^{\e}\times {H^{1,}}^{\e}.
\end{eqnarray}
The aim now is to pass to the limit for $\delta\to 0$  in
(\ref{cal(PP)}).
Reminding that the different constants $C$ in (\ref{Maj1})-(\ref{Maj2}) and
(\ref{boudv})
 are independent of $\delta$, the same estimates hold for $\bar{v}^{\e}_{\delta}$
  i.e.
 \begin{eqnarray}\label{Maj1d}
 \sup_{t\in [0 , T]}[\bar{v}^{\e}_{\delta}(t)]^{2} \leq C,
\end{eqnarray}
\begin{eqnarray}\label{Maj2d}
 \int_{0}^{T}\|{\rm div\,} v^{\e}_{\delta}\|^{2} dt \leq C \delta, \quad
\int_{0}^{T} [[\bar{v}^{\e}_{\delta}(t)]]^{2}
dt\leq C,
 \end{eqnarray}
and
\begin{eqnarray}\label{boudvd}
\|{\partial {v^{\e}_{\delta }} \over\partial t}
\|_{L^{4/3}(0 , T ; H^{-1}(\Omega^{\e})^{2})}\leq C,
\quad
\|{\partial {Z^{\e}_{\delta }} \over  \partial t}
\|_{L^{4/3}(0 , T ; H^{-1}(\Omega^{\e}))}\leq C.
\end{eqnarray}
Hence,  there exists $\bar{v}^{\e}$  such that, possibly extracting a
subsequence still denoted by $\bar{v}^{\e}_{\delta}$:
\begin{eqnarray}\label{lim1}
 \bar{v}^{\e}_{\delta} \tow \bar{v}^{\e}  \quad \mbox{  in }
L^{2}(0 , T; V^{\e})\times L^{2}(0 , T; H^{1 , \e}) \quad \mbox{  weakly for }
\delta \to 0,
\end{eqnarray}
\begin{eqnarray}\label{lim2}
 \bar{v}^{\e}_{\delta} \tow \bar{v}^{\e} \quad \mbox{  in }
L^{\infty}(0 , T;  H^{\e})\times L^{\infty}(0 , T; H^{0 , \e}) \quad \mbox{weak
star for }  \delta \to 0,
\end{eqnarray}
\begin{eqnarray}\label{lim3}
{\rm div\, } v^{\e}_{\delta} \to 0
\quad \mbox{  in } L^{2}(0 , T; L^{2}(\Om^{\e})) \quad \mbox{strongly for }
\delta \to 0,
\end{eqnarray}
and
\begin{eqnarray}\label{cfVZ}
 \bar v^{\e}_{\delta} \to \bar v^{\e} \mbox{ strongly in }   L^{2}(0 , T;
(L^{4}(\Omega^{\e})^{2}))
\times
 L^{2}(0 , T ; L^4(\Omega^{\e})).
\end{eqnarray}
So from (\ref{lim1}) and (\ref{lim3}) we deduce
\begin{eqnarray}\label{div}
 {\rm div \, } v^{\e} = 0 \quad \mbox{\rm  in } \Om^{\e}, \quad \mbox{\rm a.e.
in } (0 , T).
\end{eqnarray}
We  check now  that $p^{\e}_{\delta}$  remains in a bounded subset of
$H^{-1}(0 , T ; L^{2}_0(\Om^{\e}))$. Reminding that $p^{\e}_{\delta} = -
\frac{1}{\delta} {\rm div \,} v^{\e}_{\delta}$, we have $p^{\e}_{\delta} \in L^2
(0,T; L^2_0 (\Om^{\e}))$.
Now let us consider $\omega\in H^{1}_{0}(0 , T ; L^{2}_{0}(\Om^{\e}))$,
 then (see \cite{JLLions78} page 13-15)
there exists  $\varphi\in H^1_0 (0 , T ;  H^{1}_{0}(\Om^{\e})^{2})$  such that
\begin{eqnarray*}\label{Phi}
&&{\rm div \, }\varphi(t)= \omega(t), \mbox{ and } \varphi(t)=
P\omega(t),  \nonumber\\
&& P \mbox{ is a linear continuous operator from } L^{2}_{0}(\Om^{\e})
\mbox{ to } H^{1}_{0}(\Om^{\e})^{2}.
\end{eqnarray*}
The  choice of $\Theta= (\varphi(t) , 0)$  in
(\ref{cal(PP)}), gives
 \begin{eqnarray}\label{equ2.36}
\int_{0}^{T}(p^{\e}_{\delta} \, , \,  \omega)dt  =
\int_{0}^{T}\left(- (v^{\e}_{\delta} , {\partial\varphi\over \partial t})
+ (\nu+\nu_{r})(\nabla v^{\e}_{\delta} , \nabla\varphi)\right)dt
+\int_{0}^{T}b(v^{\e}_{\delta} , v^{\e}_{\delta} ,  \varphi)dt
\nonumber\\
+{1\over 2}\int_{0}^{T}
(v^{\e}_{\delta}{\rm div \,} v^{\e}_{\delta} \, ,  \, \varphi) dt
-2\nu_{r}\int_{0}^{T} ({\rm rot \,}  Z^{\e}_{\delta} , \varphi) dt
- \int_{0}^{T}({\cal F}_{1}(v^{\e}_{\delta})  ,  \varphi) dt,
\end{eqnarray}
with
 \begin{eqnarray}
({\cal F}_{1}(v^{\e}_{\delta}) , \varphi ) &=&
-(\nu +\nu_{r}) ({\partial U^{\e}\over \partial z_{2}},
{\partial\varphi_{1}\over \partial z_{2}})
- b(U^{\e}e_{1} , v^{\e}_{\delta} , \varphi) - b(v^{\e}_{\delta} ,
U^{\e}e_{1} ,\varphi )
\nonumber\\&&
+2\nu_{r} ( {\partial W^{\e}\over \partial z_{2}}  , \varphi_{1})
-({\partial U^{\e} \over \partial t}, \varphi_{1}) + (f^{\e} ,
\varphi).
\end{eqnarray}
Since  $\omega\in H^{1}_{0}(0 , T ; L^{2}_{0}(\Om^{\e}))\subset L^{\infty}(0 , T
; L^{2}_{0}(\Om^{\e}))$,
with continuous injection,
it follows that  $\varphi$ in $L^{\infty}(0 , T ;
H^{1}_{0}(\Om^{\e})^{2})$,
and ${\partial \varphi\over\partial t} \in L^{2}(0 , T ;
H^{1}_{0}(\Om^{\e})^{2})$,
then also by the continuous injection of $H^{1}(\Om^{\e})$ in
$L^{4}(\Om^{\e})$ we have
\begin{eqnarray*}
 |\int_{0}^{T}b(v^{\e}_{\delta}, v^{\e}_{\delta}, \varphi) dt|&\leq&
\|v^{\e}_{\delta}\|_{L^{2}(0 , T; (L^{4}(\Om^{\e}))^{2})}
\|v^{\e}_{\delta}\|_{L^{2}(0 , T; H^{1} (\Om^{\e})^{2})}
\|\varphi\|_{L^{\infty}(0 , T; (L^{4}(\Om^{\e}))^{2})}
\nonumber\\
&\leq&
C^{2}\|v^{\e}_{\delta}\|^{2}_{L^{2}(0 , T
;H^{1}(\Om^{\e})^{2})}\|\varphi\|_{H^{1}(0 , T ;
H^{1}(\Om^{\e})^{2})}.
\end{eqnarray*}
Similarly for the first term in the second line of (\ref{equ2.36}).
Therefore using (\ref{Maj1d})-(\ref{Maj2d}) we  get
 \begin{eqnarray*}\label{equ2.37}
|\int_{0}^{T}(p^{\e}_{\delta} \, , \,  \omega)dt | \leq
C\|\varphi\|_{H^{1}(0 , T ; H^{1}(\Om^{\e})^{2})}
\quad \forall \varphi\in H^{1}_{0}(0 , T ; H^{1}_{0}(\Om^{\e})^{2}).
\end{eqnarray*}
As $P: \omega(t) \mapsto \varphi(t)$ is a linear continuous operator from
$L^{2}_{0}(\Om^{\e})$ to $H^{1}_{0}(\Om^{\e})^{2}$,   there exists another
constant $C$, independent of $\delta$,  such that
 \begin{eqnarray}\label{equ2.39}
|\int_{0}^{T}(p^{\e}_{\delta} \, , \,  \omega)dt | \leq
C\|\omega\|_{H^{1}_{0}(0 , T ; L^{2}_{0}(\Om^{\e}))}
\quad \forall \omega\in H^{1}(0 , T ; L^{2}(\Om^{\e})).
\end{eqnarray}
 Let us  take now $\omega\in H^{1}_{0}(0 , T ; L^{2}(\Om^{\e}))$ arbitrary, we
can
apply
(\ref{equ2.39}) to
$$\tilde{\omega}=\omega- {1\over meas(\Om^{\e})}\int_{\Om^{\e}}\omega dz$$
which is in $H^{1}_{0}(0 , T ; L^{2}_{0}(\Om^{\e}))$. But $p^{\e}_{\delta} \in
L^2(0,T; L^2_0 (\Om^{\e}))$, so
\begin{eqnarray*}
\int_{0}^{T}(p^{\e}_{\delta} \, , \,  \tilde \omega)dt =
\int_{0}^{T}(p^{\e}_{\delta} \, , \,  \omega)dt
\end{eqnarray*}
 and
(\ref{equ2.39})
remains valid for all $\omega\in H^{1}_{0}(0 , T ; L^{2}(\Om^{\e})$. Thus
$p^{\e}_{\delta}$ remains in a bounded subset of $H^{-1}(0 , T ;
L^{2}_0(\Om^{\e}))$.
It follows that there exists $p^{\e}\in H^{-1}(0 , T ; L^{2}_0(\Om^{\e}))$ such
that
 \begin{eqnarray}\label{lim5}
  p^{\e}_{\delta} \tow p^{\e} \quad \mbox{ in } H^{-1}(0 , T ; L^{2}(\Om^{\e}))
\mbox{ weak}.
 \end{eqnarray}


 In order to pass to the limit  as $\delta\to 0$,
let  $\theta\in {\cal D}(0 , T)$,
 multiply (\ref{cal(PP)}) by $\theta(t)$ and  integrate over $[0,T]$. We get
\begin{eqnarray}\label{cal(PPPs)}
-  \int_{0}^{T}(\bar{v}^{\e}_{\delta} (t), \Theta^{\e}) \theta'(t) dt
+
\int_{0}^{T}  \left(
 a(\bar{v}^{\e}_{\delta} , \Theta^{\e})
+B(\bar{v}^{\e}_{\delta} , \bar{v}^{\e}_{\delta} , \Theta^{\e})\right)
\theta(t) dt
-\int_{0}^{T} (p^{\e}_{\delta} \, , \,  {\rm div }\varphi^{\e})
\theta(t) dt
\nonumber\\
+{1\over 2}\int_{0}^{T}
\left\{
(v^{\e}_{\delta}{\rm div \,} v^{\e}_{\delta} \, ,  \, \varphi^{\e})
+(Z^{\e}_{\delta}{\rm div \,} v^{\e}_{\delta} \, ,  \, \psi^{\e}) \right\}
\theta(t) dt
= \int_{0}^{T}  \left\{  ({\cal F}(\bar{v}^{\e}_{\delta})  , \Theta^{\e})
- {\cal R}(\bar{v}^{\e}_{\delta} , \Theta^{\e}) \right\}
\theta(t) dt
\nonumber\\
  \qquad \forall\Theta^{\e}= (\varphi^{\e} , \psi^{\e})
\in V^{\e}\times {H^{1,}}^{\e}. \
\end{eqnarray}
Using (\ref{lim1}), (\ref{lim3}), (\ref{cfVZ}) and (\ref{lim5}), then taking
into
account
(\ref{nonltv})-(\ref{nonltZ}) for $v^{\e}_{\delta}$ and  $Z^{\e}_{\delta}$
instead of
 $v^{\e}_{\delta m}$ and  $Z^{\e}_{\delta m}$ for the nonlinear terms, we can
now pass to the limit  in
all the terms of (\ref{cal(PPPs)})  to get
\begin{eqnarray*}
\int_{0}^{T}(v^{\e}, \Theta^{\e}) \theta'(t) dt =
\int_{0}^{T}  \left\{
 a(\bar{v}^{\e} , \Theta^{\e}) +B(\bar{v}^{\e} , \bar{v}^{\e} ,
\Theta^{\e})
-(p^{\e} \, , \,  {\rm div }\varphi^{\e})
\right\} \theta(t) dt
\nonumber\\
-\int_{0}^{T}  \left\{ ({\cal F}(\bar{v}^{\e}),\Theta^{\e})- {\cal
R}(\bar{v}^{\e} ,\Theta^{\e})\right\} \theta(t) dt
  \qquad \forall\Theta^{\e}= (\varphi^{\e} , \psi^{\e}) \in
V^{\e}\times {H^{1,}}^{\e} ,
\end{eqnarray*}
 that is ($\bar{v}^{\e}, p^{\e}$) satisfy (\ref{eqn:er2.14})
 in ${\cal D}'(0 , T)$ (distribution sense). Moreover we can see also
that
(\ref{eqn:er2.14}) is satisfied for almost every  $t \in (0 , T)$.

\smallskip
Finally, by considering test-functions  $\Theta^{\e}\in V_{div}\times H^{1,
\e}$,  we can prove  the uniqueness of $(v^{\e}, Z^{\e})$
and its continuity in time
 as in Theorem 2.2 \cite{gl}. Thus the proof of the existence and uniqueness
of a solution of Problem $(P^{\e})$  is complete.
\end{proof}

\renewcommand{\theequation}{3.\arabic{equation}}
\setcounter{equation}{0}
\section{A priori uniform estimates of $\bar{v}^{\e}$ and
$p^{\e}$}\label{uniformEst}
The aim in this section is to establish  uniform
estimates with respect
to $\e$ for   $\bar{v}^{\e}$  and $p^{\e}$,
which will  allow
us  to derive in the next sections   the limit problem as $\e$ tends to zero by
using the
two-scale convergence technique.
 More precisely we consider first   the following  scaling
\begin{eqnarray}\label{2.2a}
 x_{1} = z_{1}, \quad\mbox{ and }\quad x_{2} ={z_{2}\over \e},
 \end{eqnarray}
 which  transforms the domain $\Om^{\e}$ into the domain
$$
\Om_{\e}=\bigl\{x=(x_{1} , x_{2})\in \br^{2} :\quad  0< x_{1}< L
 \qquad 0< x_{2}<  h^{\e}(x_{1})= h(x_{1}, {x_{1}\over \e}) \bigr\},$$
then we introduce a second  scaling
 \begin{eqnarray}\label{2.2b}
 y_{1} = x_{1}, \quad\mbox{ and }\quad  y_{2} ={x_{2}\over  h^{\e}(x_{1})}
=  {z_{2}\over \e h^{\e}(x_{1})}
 \end{eqnarray}
which transforms  the domain $\Om_{\e}$  into  $\Om=\{y=(y_{1} , y_{2}) \in
\Gamma_{0} \times (0 , 1)\}$.
With the chain rule,  we get easily the following relations
\begin{eqnarray}\label{not}
{\partial \over \partial z_{2}}= {1\over \e h^{\e}(y_{1})}{\partial \over
\partial y_{2}},
\quad
 {\partial \over \partial z_{1}}
&=&{\partial \over \partial y_{1}}{\partial y_{1}\over \partial z_{1}} +
 {\partial  \over \partial y_{2}}{\partial y_{2}\over \partial z_{1}}
= {\partial \over \partial y_{1}} + \left(-{y_{2}\over h^{\e}(y_{1})}
{\partial h^{\e}\over\partial y_{1}}\right)
{\partial  \over \partial y_{2}}
\nonumber\\
&=&
\left(1  , -{y_{2}\over h^{\e}(y_{1})}
{\partial h^{\e}\over\partial y_{1}}\right)
\left(\begin{array}[c]{c}
 {\partial \over\partial y_{1}} \\ \\
{\partial \over\partial y_{2}}
\end{array}
\right)
=b_{\e}\cdot\nabla.
\end{eqnarray}

Now we define the  functional setting in $\Om$: let
$\Gamma_{1}=
\{(y_{1} ,y_{2})\in \overline{\Om} :  \quad y_{2}=1\}$ and
$$\tilde{V}= \{ v \in {\cal C}^{\infty}({\overline \Om})^{2}:
 \,\, v \,\,\mbox{\rm is  L-periodic in }\,\, y_1,\,\,\,  v_{|_{\Gamma_0}} =0,
\,
    -\e (h^{\e})'(y_{1}) v_{1} + v_{2}  =0
\mbox{ on } \Gamma_{1}\}$$
$$V = {\rm closure \,\,  of\,\, } \tilde{V} \,\,
 {\rm  in }  \,\, H^{1}(\Om)\times  H^{1}(\Om)$$
$$\tilde{H^{1}}= \{Z \in {\cal C}^{\infty}(\overline{\Om}): \,\,\,Z
    \,\,\,\mbox{\rm is  L-periodic in }\,\,\, y_1,\,\,\,Z=0 \,\,\,{\rm
on}\,\,\,
    \Gamma_0\cup\Gamma_1\}$$
$$H= {\rm  closure \,\,  of   \, } \tilde{V}  \,\, {\rm in }\,\,
  L^{2}(\Om) \times  L^{2}(\Om), \qquad
H^{1} = {\rm closure \,\,  of\,\, } \tilde{H^{1}} \,\, {\rm  in }
 \,\, H^{1}(\Om),
 $$
$${H^{0}} = {\rm  closure \,\,  of   \, } \tilde{H^{1}}\,\,
{\rm in }\,\,  L^{2}(\Om).  $$

In order to avoid new notations,  we have still denoted by $ v^{\e}$, $Z^{\e}$
and $p^{\e}$ the unknown velocity, micro-rotation and pressure fields as
functions of the rescaled variables $(y_1, y_2)$ instead of $(z_1, z_2)$.
Similarly, we still denote the data by $\bar f^{\e}$ and $\bar \xi^{\e}$
considered now as functions of $(y_1, y_2)$.

Let $\Theta = (\varphi, \psi) \in V \times H^1$ and let $\Theta^{\e} =
(\varphi^{\e}, \psi^{\e}) \in V^{\e} \times H^{1,\e} $ be given by
\begin{eqnarray*}
\varphi^{\e} (z_1, z_2) = \varphi \left( z_1, \frac{z_2}{\e h^{\e} (z_1)}
\right), \quad \psi^{\e} (z_1, z_2) = \psi \left( z_1, \frac{z_2}{\e h^{\e}
(z_1)} \right) \quad \forall (z_1, z_2) \in \Om^{\e}.
\end{eqnarray*}
Using (\ref{not}) we obtain that
\begin{eqnarray} \label{fla}
a(\bar{v}^{\e}(t),\Theta^{\e}) &=&
(\nu + \nu_{r})
\sum_{i, j=1}^{2}\int_{\Om^{\e}}
             {\partial v_{i}^{\e}(t)\over \partial z_{j}}
             {\partial \varphi_{i}^{\e} (t)\over \partial z_{j}}dz
  +\aal\sum_{i=1}^{2} \int_{\Om^{\e}}
{\partial Z^{\e}(t)\over \partial z_{i}}
             {\partial \psi^{\e} \over \partial z_{i}}dz
\nonumber\\
&=&  (\nu + \nu_{r})
\int_{\Om}
\sum_{i=1}^{2} \left((b_{\e}\cdot \nabla v_{i}^{\e}(t))(b_{\e}\cdot \nabla
\varphi_{i})
+ {1\over (\e h^{\e})^{2}}{\partial   v^{\e}_{i}(t)\over \partial y_{2}}
{\partial \varphi_{i}\over \partial y_{2}}\right) \e h^{\e} dy
 \nonumber\\
&&+\aal\int_{\Om}
\left((b_{\e}\cdot \nabla Z^{\e}(t))(b_{\e}\cdot \nabla \psi) +
 {1\over (\e h^{\e})^{2}}{\partial   Z^{\e}(t)\over \partial y_{2}}
{\partial \psi\over \partial y_{2}}\right) \e h^{\e} dy
\nonumber\\
&=&  {(\nu + \nu_{r})\over \e}
\int_{\Om}
\sum_{i=1}^{2} \left((\e b_{\e}\cdot \nabla v_{i}^{\e}(t))(\e b_{\e}\cdot \nabla
\varphi_{i})
 + {1\over (h^{\e})^{2}}{\partial v^{\e}_{i}(t)\over \partial y_{2}}
{\partial \varphi_{i}\over
\partial y_{2}}\right) h^{\e} dy
 \nonumber\\
&&+{\aal\over \e}\int_{\Om}
\left((\e b_{\e}\cdot \nabla Z^{\e}(t))(\e b_{\e}\cdot \nabla \psi) +
 {1\over (h^{\e})^{2}}{\partial   Z^{\e}(t)\over \partial y_{2}}
{\partial\psi\over \partial y_{2}}\right)  h^{\e} dy
={1\over \e}\hat{a}(\bar{v}^{\e}(t),\Theta),
\end{eqnarray}
\begin{eqnarray} \label{trifb}
B(\bar{v}^{\e}(t),\bar{v}^{\e}(t),\Theta^{\e}) &=& b(v^{\e}(t), v^{\e}(t),
\varphi^{\e}) +
b_{1}(v^{\e}(t), Z^{\e}(t),  \psi^{\e}) \nonumber\\
&=&\int_{\Om^{\e}} \sum_{i , j =1}^{2} v^{\e}_{i}(t){\partial v^{\e}_{j}(t)\over
\partial z_{i}} \varphi_{j}^{\e} dz
 + \sum_{i =1}^{2}\int_{\Om^{\e}} v^{\e}_{i}(t){\partial Z^{\e}(t)\over \partial
z_{i}} \psi^{\e} dz
 \nonumber\\
&=& \int_{\Om}\left(\sum_{j =1}^{2} v^{\e}_{1}(t)
\left(\e b_{\e}\cdot \nabla v^{\e}_{j}(t)   \right)\varphi_{j} +
  {v^{\e}_{2}(t)\over h^{\e}}{\partial v^{\e}_{j}(t) \over \partial y_{2}}
\varphi_{j}\right)  h^{\e} dy
 \nonumber\\
&+&
\int_{\Om}  \left(\sum_{j =1}^{2}v^{\e}_{1}(t)(\e b_{\e}\cdot \nabla Z^{\e}(t))
\psi   +
 {v^{\e}_{2}(t)\over h^{\e}} {\partial Z^{\e}(t)\over \partial y_{2}} \psi
\right) h^{\e}  dy
 \nonumber\\
&=&
\hat{B}(\bar{v}^{\e}(t),\bar{v}^{\e}(t),\Theta),
\end{eqnarray}
\begin{eqnarray}\label{Rot}
{\cal R}(\bar{v}^{\e}(t) , \Theta^{\e})
&=& -2\nu_{r}
\int_{\Om^{\e}}\left( {\partial Z^{\e}(t)\over\partial z_{2}}\varphi_{1}^{\e}
                    - {\partial Z^{\e}(t)\over\partial z_{1}}\varphi_{2}^{\e}\right)
+\left({\partial v_{2}^{\e}(t)\over\partial z_{1}}- {\partial
v_{1}^{\e}(t)\over\partial z_{2}}\right)\psi^{\e}  dz
\nonumber\\
&& +4\nu_{r}  \int_{\Om^{\e}}  Z^{\e}(t)\psi^{\e}  dz
= -2\nu_{r} \int_{\Om} \left({1\over  h^{\e} }
{\partial Z^{\e}(t)\over \partial y_{2}}\varphi_{1} -
(\e b_{\e}\cdot \nabla Z^{\e}(t)) \varphi_{2}\right) h^{\e} dy
\nonumber\\
&&-2\nu_{r} \int_{\Om} \left(
(\e b_{\e}\cdot \nabla v^{\e}_{2}(t))
- {1\over  h^{\e}} {\partial v_{1}^{\e}(t)\over \partial y_{2}}\right)\psi h^{\e} dy
+ 4\nu_{r}\e \int_{\Om} Z^{\e}(t)\psi \,  h^{\e}  dy
\nonumber\\
&=& \hat{{\cal R}}(\bar{v}^{\e}(t) , \Theta).
\end{eqnarray}
Using Lemma \ref{lUW} we have
$U^{\e}(t , z_{2})= {\cal U}({z_{2}\over \e}) U_{0}(t)
= {\cal U}(y_{2}h^{\e}(y_{1})) U_{0}(t)$,
so
 \begin{eqnarray}\label{For3.8}
b_{\e}\cdot \nabla  {\cal U}(y_{2}h^{\e}(y_{1}))&=&
\left( {\partial \over \partial  y_{1}}
-{y_{2}\over h^{\e}}  {\partial h^{\e}\over \partial  y_{1}} {\partial
\over \partial y_{2}}  \right)
{\cal U}(y_{2}h^{\e}(y_{1}))
=  {\cal U}'(y_{2}h^{\e}(y_{1})) \left(
 y_{2} {\partial h^{\e}\over \partial  y_{1}}
 - {y_{2}\over h^{\e} } {\partial h^{\e}\over \partial y_{1}}
h^{\e}
 \right)
 \nonumber\\
&=& 0,
 \end{eqnarray}
and similarly  for
$W^{\e}(t , z_{2})= {\cal W}({z_{2}\over \e}) W_{0}(t)
= {\cal W}(y_{2}h^{\e}(y_{1}))W_{0}(t)$, so
\begin{eqnarray}\label{For3.9}
 b_{\e}\cdot \nabla  {\cal W}(y_{2}h^{\e}(y_{1}))= 0.
 \end{eqnarray}
 Then
\begin{eqnarray}\label{For3.10}
a(\bar{\xi^{\e}},\Theta^{\e}) &=& (\nu + \nu_{r}) \int_{\Om^{\e}} \nabla
U^{\e}(z_{2},t)e_{1}\nabla\varphi^{\e} dz + \aal \int_{\Om^{\e}} \nabla
W^{\e}(z_{2},t) \nabla \psi^{\e} dz \nonumber\\
&=&  (\nu + \nu_{r}) \int_{\Om^{\e}} \sum_{i=1}^{2}{\partial U^{\e}\over
\partial z_{i}} {\partial \varphi_{1}^{\e}\over \partial z_{i}} dz
 + \aal \int_{\Om^{\e}}  \sum_{i=1}^{2}{\partial W^{\e}\over \partial z_{i}}
{\partial \psi^{\e}\over \partial z_{i}} dz
 \nonumber\\
&=&(\nu + \nu_{r})U_{0}(t) \int_{\Om} \left( (b_{\e}\cdot \nabla {\cal U})
(b_{\e}\cdot \nabla \varphi_{1}) +
               {1\over (\e h^{\e})^{2}} {\cal U}'(
y_{2}h^{\e}) h^{\e} {\partial \varphi_{1}\over \partial y_{2}}
\right) \e  h^{\e} dy
\nonumber\\
&&+ \aal W_{0}(t)\int_{\Om} \left( (b_{\e}\cdot \nabla {\cal W}) (b_{\e}\cdot
\nabla \psi) +
               {1\over (\e h^{\e})^{2}} {\cal W}'( y_{2}h^{\e}) h^{\e} {\partial
\psi\over \partial y_{2}}
\right)  \e h^{\e} dy
 \nonumber\\
&=&
{(\nu + \nu_{r})\over \e} U_{0}(t) \int_{\Om}
              {\cal U}'( y_{2}h^{\e}) {\partial \varphi_{1}\over
\partial y_{2}}  dy
+ {\aal \over \e} W_{0}(t)\int_{\Om}
                 {\cal W}'( y_{2}h^{\e}) {\partial \psi\over \partial
y_{2}} dy
= {1\over  \e}\hat{a}(\bar{\xi^{\e}},\Theta).
\end{eqnarray}
We have also
\begin{eqnarray} \label{Rot1}
B(\bar{\xi^{\e}},\bar{v}^{\e},\Theta^{\e})
&=& b(U^{\e}e_{1}, v^{\e}, \varphi^{\e}) + b_{1}(U^{\e}e_{1} ,  Z^{\e},
\psi^{\e})
\nonumber\\
&=&\int_{\Om^{\e}} \sum_{j =1}^{2} U^{\e}{\partial v^{\e}_{j}\over \partial
z_{1}} \varphi_{j}^{\e} dz
+ \int_{\Om^{\e}}  U^{\e}{\partial Z^{\e}\over \partial z_{1}} \psi^{\e} dz
 \nonumber\\
&=& U_{0}(t) \int_{\Om} {\cal U}( y_{2}h^{\e}) \left(
\sum_{j =1}^{2}
(\e b_{\e}\cdot \nabla v^{\e}_{j})\varphi_{j}
+
(\e b_{\e}\cdot \nabla  Z^{\e}) \psi\right)  h^{\e}  dy
=\hat{B}(\bar{\xi^{\e}},\bar{v}^{\e},\Theta),
\end{eqnarray}
\begin{eqnarray}\label{Rott}
 B(\bar{v}^{\e},\bar{\xi^{\e}}, \Theta^{\e})&=& -B(\bar{v}^{\e},
\Theta^{\e},\bar{\xi^{\e}})
= -b(v^{\e},  \varphi^{\e}, U^{\e}e_{1}) - b_{1}(v^{\e} ,  \psi^{\e}, W^{\e})
\nonumber\\
&=&-\sum_{i=1}^{2} \int_{\Om^{\e}}
v^{\e}_{i}{\partial\varphi_{1}^{\e}\over \partial z_{i}} U^{\e} dz -
\sum_{i=1}^{2} \int_{\Om^{\e}}
v^{\e}_{i}{\partial\psi^{\e}\over \partial z_{i}} W^{\e} dz
 \nonumber\\
&=& - U_{0}(t) \int_{\Om}  {\cal U}( y_{2}h^{\e})\left(
v^{\e}_{1} (\e b_{\e}\cdot \nabla \varphi_{1})  h^{\e}   +
v^{\e}_{2} {\partial \varphi_{1}\over \partial y_{2}} \right)  dy
 \nonumber\\
&&-  W_{0}(t) \int_{\Om} {\cal W}( y_{2}h^{\e})\left(
v^{\e}_{1} (\e b_{\e}\cdot \nabla \psi)  h^{\e}   +
v^{\e}_{2}  {\partial  \psi\over \partial y_{2}} \right)
 dy
=\hat{B}(\bar{v}^{\e},\bar{\xi^{\e}}, \Theta),
\end{eqnarray}
\begin{eqnarray} \label{Rotxi}
 {\cal R}(\bar{\xi^{\e}} , \Theta^{\e})
&=& -2\nu_{r} W_{0}(t) \int_{\Om} \left(\varphi_{1} {1\over \e h^{\e} }
{\partial \over \partial y_{2}} {\cal W}(y_{2}h^{\e})  -  (b_{\e}\cdot \nabla
{\cal W}) \varphi_{2}\right)\e h^{\e}  dy
\nonumber\\
&&-2\nu_{r} U_{0}(t)\int_{\Om} \left(- {1\over \e h^{\e} }{\partial\over
\partial y_{2}}{\cal U}(y_{2}h^{\e})\right)\psi \e h^{\e} dy
+ 4\nu_{r}  W_{0}(t) \int_{\Om}   {\cal W}(y_{2}h^{\e})  \psi \, \e
h^{\e} dy\nonumber\\
&=& -2\nu_{r}  W_{0}(t)
 \int_{\Om} {\cal W}'(y_{2}h^{\e})\varphi_{1} h^{\e} dy+
2 \nu_r U_{0}(t)\int_{\Om} {\cal U}'(y_{2}h^{\e}) \psi  h^{\e} dy
\nonumber\\
&&+
4\nu_{r}  W_{0}(t) \int_{\Om}   {\cal W}(y_{2}h^{\e})  \psi \, \e
h^{\e}  dy
=\hat{{\cal R}}(\bar{\xi^{\e}} , \Theta),
\end{eqnarray}
 and
\begin{eqnarray}\label{pression}
\qquad (p^{\e}(t) , {\rm div\,} \varphi^{\e} )= \int_{\Om^{\e}} p^{\e}(t){\rm div\,}
\varphi^{\e} dz
= \int_{\Om} p^{\e}(t) \left(
(\e b_{\e}\cdot \nabla \varphi_{1}) + {1\over h^{\e}}{\partial
\varphi_{2} \over \partial y_{2}}
                          \right)  h^{\e}  dy.
\end{eqnarray}

\smallskip

\begin{lemma} \label{lemma3.1}
Using {\rm(\ref{2.2a})-(\ref{2.2b})}, the variational identity
{\rm(\ref{eqn:er2.14})}
written in   $\Om^{\e}$ leads to  the following one  in $\Om$:
 \begin{eqnarray} \label{eqvar}
\e
\int_{\Om} {d\bar{v}^{\e}\over dt}(t)\Theta^{\e} h^{\e} dy + {1\over
\e}\hat{a}(\bar{v}^{\e}(t), \Theta^{\e})
      + \hat{B}(\bar{v}^{\e}(t), \bar{v}^{\e}(t), \Theta^{\e}) +
\hat{{\cal R}}(\bar{v}^{\e}(t), \Theta^{\e})
= -\e
\int_{\Om} {d\bar{\xi}^{\e}\over dt}(t)\Theta^{\e} h^{\e} dy
\nonumber\\
 -{1\over
\e}\hat{a}(\bar{\xi}^{\e}(t) , \Theta^{\e}) -\hat{B}(\bar{\xi}^{\e}(t)
, \bar{v}^{\e}(t), \Theta^{\e})
-\hat{B}(\bar{v}^{\e}(t),  \bar{\xi}^{\e}(t) ,  \Theta^{\e}) -
\hat{{\cal R}}(\bar{\xi}^{\e}(t) ,\Theta^{\e})
+ \e \int_{\Om} \bar{f}^{\e}(t) \Theta^{\e} h^{\e} dy
\nonumber\\
+
\int_{\Om} p^{\e}(t) \left( (\e b_{\e}\cdot \nabla \varphi^{\e}_{1})
                + {1\over h^{\e}}{\partial \varphi^{\e}_{2} \over \partial
y_{2}} \right) h^{\e}  dy,
\quad \forall \Theta^{\e}=(\varphi^{\e} , \psi^{\e})\in V^{\e}\times H^{1 , \e},
\end{eqnarray}
where $\hat{a}$, $\hat{B}$, and  $\hat{{\cal R}}$, are
 defined   by
 \rm(\ref{fla}), \rm(\ref{trifb}), and \rm(\ref{Rot}) respectively.
\end{lemma}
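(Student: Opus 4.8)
The plan is to transform \emph{every} term of the variational identity (\ref{eqn:er2.14}) from $\Om^\e$ to $\Om$ by means of the two successive scalings (\ref{2.2a})--(\ref{2.2b}), reusing the transformation formulas (\ref{fla}), (\ref{trifb}), (\ref{Rot}), (\ref{For3.10}), (\ref{Rot1}), (\ref{Rott}), (\ref{Rotxi}) and (\ref{pression}) already obtained above for all the terms except the two time-derivative pairings and the force pairing.

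First I would record the elementary facts needed. The composition of (\ref{2.2a}) and (\ref{2.2b}) is, for each fixed $\e$, a $C^\infty$-diffeomorphism $z \mapsto y$ of $\overline{\Om^\e}$ onto $\overline\Om$, given by $z_1 = y_1$, $z_2 = \e h^\e(y_1)\, y_2$, which sends $\Gamma_0$ onto $\Gamma_0$ and $\Gamma_1^\e$ onto $\Gamma_1 = \{y_2 = 1\}$ and has Jacobian determinant $\e h^\e(y_1)$; hence $dz = \e h^\e(y_1)\, dy$. In particular the composition operator $\Theta \mapsto \Theta^\e$, defined by $\varphi^\e(z) = \varphi(z_1, z_2/(\e h^\e(z_1)))$, $\psi^\e(z) = \psi(z_1, z_2/(\e h^\e(z_1)))$, maps the test spaces $\tilde V$ and $\tilde{H^1}$ bijectively onto $\tilde{V^\e}$ and $\tilde{H^1}^\e$, and therefore extends to a linear bijection of $V \times H^1$ onto $V^\e \times H^{1,\e}$. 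Consequently, testing (\ref{eqn:er2.14}) against all $\Theta^\e \in V^\e \times H^{1,\e}$ amounts to testing it against $\Theta^\e$ while $\Theta$ runs over $V \times H^1$.

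Next I would rewrite (\ref{eqn:er2.14}) term by term, with $({\cal F}(\bar v^\e(t)), \Theta^\e)$ expanded according to its definition. By (\ref{fla}), (\ref{trifb}), (\ref{Rot}) the terms $a(\bar v^\e(t), \Theta^\e)$, $B(\bar v^\e(t), \bar v^\e(t), \Theta^\e)$, ${\cal R}(\bar v^\e(t), \Theta^\e)$ become $\frac1\e \hat a(\bar v^\e(t), \Theta^\e)$, $\hat B(\bar v^\e(t), \bar v^\e(t), \Theta^\e)$, $\hat{\cal R}(\bar v^\e(t), \Theta^\e)$; by (\ref{For3.10}), (\ref{Rot1}), (\ref{Rott}), (\ref{Rotxi}) the four ${\cal F}$-contributions built from $\bar\xi^\e$, namely $a(\bar\xi^\e(t), \Theta^\e)$, $B(\bar\xi^\e(t), \bar v^\e(t), \Theta^\e)$, $B(\bar v^\e(t), \bar\xi^\e(t), \Theta^\e)$, ${\cal R}(\bar\xi^\e(t), \Theta^\e)$, become $\frac1\e \hat a(\bar\xi^\e(t), \Theta^\e)$, $\hat B(\bar\xi^\e(t), \bar v^\e(t), \Theta^\e)$, $\hat B(\bar v^\e(t), \bar\xi^\e(t), \Theta^\e)$, $\hat{\cal R}(\bar\xi^\e(t), \Theta^\e)$ (here Lemma \ref{lUW} together with the identities (\ref{For3.8})--(\ref{For3.9}), i.e. $b_\e\cdot\nabla\, {\cal U}(y_2 h^\e) = b_\e\cdot\nabla\, {\cal W}(y_2 h^\e) = 0$, is what lets $\bar\xi^\e$ pass cleanly through the scaling); and by (\ref{pression}) the pressure term $(p^\e(t), {\rm div}\,\varphi^\e)$ becomes $\int_\Om p^\e(t)\big(\e b_\e\cdot\nabla\varphi_1^\e + \frac1{h^\e}\,\partial_{y_2}\varphi_2^\e\big)h^\e\, dy$. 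The only remaining terms are the plain $L^2(\Om^\e)$ pairings $[\partial_t \bar v^\e(t), \Theta^\e]$, $[\partial_t \bar\xi^\e(t), \Theta^\e]$ and $[\bar f^\e(t), \Theta^\e]$, which under $dz = \e h^\e\, dy$ turn into $\e\int_\Om \frac{d\bar v^\e}{dt}(t)\,\Theta^\e h^\e\, dy$, $\e\int_\Om \frac{d\bar\xi^\e}{dt}(t)\,\Theta^\e h^\e\, dy$ and $\e\int_\Om \bar f^\e(t)\,\Theta^\e h^\e\, dy$.

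Substituting all of these into (\ref{eqn:er2.14}) and moving the $\bar\xi^\e$-terms and the force term to the right-hand side produces precisely (\ref{eqvar}). There is no genuine difficulty: the argument is bookkeeping, and the two points deserving care are (i) tracking the powers of $\e$ --- the factor $\e$ coming from the Jacobian in the three $L^2$ pairings, against the $\frac1\e$ already carried by $\hat a$ in (\ref{fla}) and (\ref{For3.10}) --- and (ii) the surjectivity of $\Theta \mapsto \Theta^\e$ noted in the first step, which guarantees that replacing test functions on $\Om^\e$ by test functions on $\Om$ loses nothing.
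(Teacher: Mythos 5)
Your proposal is correct and takes essentially the same route as the paper: the paper's own proof of Lemma~\ref{lemma3.1} is a one-line reference to the transformation identities (\ref{fla})--(\ref{pression}), which you invoke term by term, supplementing them only with the Jacobian factor $dz=\e h^{\e}\,dy$ for the three plain $L^{2}$ pairings and the observation that $\Theta\mapsto\Theta^{\e}$ is a bijection between the test spaces. The extra care you take over those two points is sound bookkeeping, not a deviation in method.
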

 \begin{proof}
Indeed, from (\ref{fla})-(\ref{pression}), the variational identity
(\ref{eqvar}) follows.
 \end{proof}

\bigskip

We prove now  the following uniform estimates, with respect to $\e$:

 \begin{proposition}\label{pro1}
Assume that $\e^{2}\bar{f}^{\e}$ and $\e \bar v^{\e}_0$ are  bounded
independently of $\e$ in $(L^2( (0,T) \times \Om))^3$ and in $(L^2(\Om))^3$
respectively and
 $U_{0}  \in H^{1}(0, T)$,  $W_{0} \in H^{1}(0, T)$.
There exists a constant $C>0$ which does not depends on  $\e$,
such that, for $i=1,2$, we have  the following
estimates:
\begin{eqnarray}\label{E3.13}
 \|(\e b_{\e}\cdot \nabla v^{\e}_{i})\|_{L^{2}((0 , T)\times \Om)}\leq C,\qquad
 \|(\e b_{\e}\cdot \nabla Z^{\e})\|_{L^{2}((0 , T)\times \Om)}\leq C,
\end{eqnarray}
 \begin{eqnarray}\label{E3.14}
  \|{\partial v_{i}^{\e}\over \partial y_{2}}\|_{L^{2}((0 , T)\times \Om)}\leq
C, \qquad
 \|{\partial Z^{\e}\over \partial y_{2}}\|_{L^{2}((0 , T)\times \Om)}\leq C,
 \end{eqnarray}
 \begin{eqnarray}\label{E3.15}
  \|{\partial v_{i}^{\e}\over \partial y_{1}}\|_{L^{2}((0 , T)\times \Om)}\leq
{C\over \e}, \qquad
 \|{\partial Z^{\e}\over \partial y_{1}}\|_{L^{2}((0 , T)\times \Om)}\leq
{C\over \e},
 \end{eqnarray}
\begin{eqnarray}\label{E3.16}
  \|v_{i}^{\e}\|_{L^{2}((0 , T)\times \Om)}\leq C,
\qquad \|Z^{\e}\|_{L^{2}((0 , T)\times \Om)}\leq C.
\end{eqnarray}
 \end{proposition}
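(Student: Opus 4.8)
The plan is to return to the variational identity (\ref{eqvar}) and test it with $\Theta^{\e} = \bar v^{\e}(t)$ itself, exactly mirroring the energy estimate already carried out at the Galerkin level in the proof of Theorem~\ref{th2.1}. The crucial structural point is that, by the same integration-by-parts identities used there (the trilinear terms $\hat{B}(\bar v^{\e},\bar v^{\e},\bar v^{\e})$ cancel against the penalty-type symmetrisation, and $\hat{B}(\bar\xi^{\e},\bar v^{\e},\bar v^{\e})=0$), together with the fact that $v^{\e}$ is now genuinely divergence-free so the pressure term $\int_{\Om} p^{\e}(\e b_{\e}\cdot\nabla\varphi^{\e}_1 + (h^{\e})^{-1}\partial_{y_2}\varphi^{\e}_2)h^{\e}\,dy$ drops out when $\varphi^{\e}=v^{\e}$. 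This leaves an energy identity of the form
\begin{eqnarray*}
\frac{\e}{2}\frac{d}{dt}\int_{\Om}|\bar v^{\e}|^2 h^{\e}\,dy
+ \frac{1}{\e}\hat a(\bar v^{\e},\bar v^{\e})
= -\frac{1}{\e}\hat a(\bar\xi^{\e},\bar v^{\e})
- \e\!\int_{\Om}\!\frac{d\bar\xi^{\e}}{dt}\bar v^{\e}h^{\e}dy
- \hat B(\bar v^{\e},\bar\xi^{\e},\bar v^{\e})
- \hat{\cal R}(\bar v^{\e},\bar v^{\e})
+ \e\!\int_{\Om}\!\bar f^{\e}\bar v^{\e}h^{\e}dy.
\end{eqnarray*}

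Next I would read off the coercivity of $\hat a$. By its definition (\ref{fla}), $\frac{1}{\e}\hat a(\bar v^{\e},\bar v^{\e})$ controls, up to the constants $\nu+\nu_r$, $\aal$ and the bounds $h_m\le h^{\e}\le h_M$, the quantity $\frac1\e\int_{\Om}\big(|\e b_{\e}\cdot\nabla v^{\e}|^2 + (h^{\e})^{-2}|\partial_{y_2}v^{\e}|^2 + \text{(analogous $Z^{\e}$ terms)}\big)h^{\e}\,dy$; in particular it dominates $\frac1\e\sum_i\|\e b_{\e}\cdot\nabla v^{\e}_i\|_{L^2(\Om)}^2 + \frac1\e\sum_i\|\partial_{y_2}v^{\e}_i\|_{L^2(\Om)}^2$ plus the $Z^{\e}$-counterparts, with $\e$-independent constants. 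The remaining work is to absorb the right-hand side: every term there is estimated by Cauchy--Schwarz and Young's inequality so that its "gradient-like" part is bounded by a small fraction of $\frac1\e\hat a(\bar v^{\e},\bar v^{\e})$ while its "zero-order" part is either an $\e$-independent data term or is controlled using a Poincaré inequality on $\Om$ in the $y_2$-direction (available because $v^{\e}=0$ on $\Gamma_0$ and $Z^{\e}=0$ on $\Gamma_0\cup\Gamma_1$), which bounds $\|\bar v^{\e}\|_{L^2(\Om)}$ by $\|\partial_{y_2}\bar v^{\e}\|_{L^2(\Om)}$. The hypotheses $\e^2\bar f^{\e}$ bounded in $L^2((0,T)\times\Om)^3$, $\e\bar v^{\e}_0$ bounded in $L^2(\Om)^3$, and $U_0,W_0\in H^1(0,T)$ (which, via Lemma~\ref{lUW} and the structure $\bar\xi^{\e}=({\cal U}(y_2 h^{\e})U_0\,e_1,\,{\cal W}(y_2 h^{\e})W_0)$, make $\frac1{\sqrt\e}\|\partial_{y_2}\bar\xi^{\e}\|$, $\sqrt\e\|\frac{d\bar\xi^{\e}}{dt}\|$ and $\sqrt\e\|\bar\xi^{\e}\|$ all $\e$-independent in the relevant norms) are exactly what is needed to make those data contributions $\e$-uniform.

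The outcome of this absorption is a differential inequality $\frac{d}{dt}\big(\e\int_{\Om}|\bar v^{\e}|^2 h^{\e}dy\big) + \frac{c}{\e}\hat a(\bar v^{\e},\bar v^{\e}) \le C\big(\e\int_{\Om}|\bar v^{\e}|^2 h^{\e}dy\big) + C$ with $c,C$ independent of $\e$; note the coefficient on the right comes only from the zero-order terms handled by Poincaré, so no spurious powers of $\e$ appear. Gr\"onwall's lemma then yields $\sup_{t}\e\int_{\Om}|\bar v^{\e}(t)|^2 h^{\e}dy \le C$, and integrating the inequality in time gives $\int_0^T\frac1\e\hat a(\bar v^{\e}(t),\bar v^{\e}(t))\,dt\le C$. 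Translating these two bounds back through $h_m\le h^{\e}\le h_M$ gives precisely (\ref{E3.13}) and (\ref{E3.14}) (from $\int_0^T\frac1\e\hat a\,dt\le C$, discarding the harmless factor $1/\e\ge $ const is the wrong direction — rather, $\frac1\e\ge$ is large, so $\hat a\le C\e$... ) — here care is needed: since $\e<1$, $\frac1\e>1$, so $\int_0^T\hat a\,dt\le\e\!\cdot\!C\le C$ gives the desired $\e$-uniform $L^2$ bounds on $\e b_{\e}\cdot\nabla\bar v^{\e}$ and $\partial_{y_2}\bar v^{\e}$. Then (\ref{E3.16}) follows from (\ref{E3.14}) by the Poincaré inequality in $y_2$, and (\ref{E3.15}) follows by writing $\partial_{y_1}v^{\e}_i = (\e b_{\e}\cdot\nabla v^{\e}_i)/\e + \frac{y_2}{\e h^{\e}}\frac{\partial h^{\e}}{\partial y_1}\partial_{y_2}v^{\e}_i$ (from (\ref{not})), and bounding each piece by $\frac{C}{\e}$ using (\ref{E3.13})--(\ref{E3.14}) and the uniform bounds on $h^{\e}$ and $\partial h^{\e}/\partial y_1$ coming from (\ref{Lper2}).

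I expect the main obstacle to be bookkeeping the powers of $\e$ consistently: the energy identity carries an overall $\e$ on the time-derivative term and a $1/\e$ on the elliptic term, and one must verify that every right-hand-side term, after Young's inequality, is either absorbable into $\frac{c}{\e}\hat a(\bar v^{\e},\bar v^{\e})$ or is $\e$-uniformly bounded — in particular the cross term $\hat B(\bar v^{\e},\bar\xi^{\e},\bar v^{\e})$ and the rotational term $\hat{\cal R}(\bar v^{\e},\bar v^{\e})$ must be checked to produce at worst $\e$-independent multiples of $\|\partial_{y_2}\bar v^{\e}\|_{L^2(\Om)}^2$ (using $L^\infty$ bounds on ${\cal U}',{\cal W}'$ and the boundedness of $\|U_0\|_{L^\infty(0,T)},\|W_0\|_{L^\infty(0,T)}$) so that, after multiplication by the available $\frac1\e$ coming from coercivity, they can be absorbed without deteriorating the constant. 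Everything else is routine Cauchy--Schwarz, Young, Poincaré and Gr\"onwall.
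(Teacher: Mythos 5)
Your strategy matches the paper's exactly: take $\Theta^{\e} = \bar v^{\e}(t)$ in (\ref{eqvar}), use the trilinear cancellations $B(\bar v^{\e},\bar v^{\e},\bar v^{\e})=B(\bar\xi^{\e},\bar v^{\e},\bar v^{\e})=0$, read off coercivity of $\hat a$, absorb the right-hand side via Young and Poincar\'e, close with Gr\"onwall, and finally derive (\ref{E3.15})--(\ref{E3.16}) from (\ref{E3.13})--(\ref{E3.14}) using (\ref{not2}) and Poincar\'e in $y_2$. That is the proof.

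Two bookkeeping remarks, neither of which breaks the argument but both of which a careful reader will flag. First, the mechanism you invoke for killing $B(\bar v^{\e},\bar v^{\e},\bar v^{\e})$ --- ``the penalty-type symmetrisation'' --- is not present in Problem $(P^{\e})$; that device belonged to the penalised problem $(P^{\e}_{\delta})$. Here the cancellation comes purely from ${\rm div}\,v^{\e}=0$ together with the boundary conditions on $\Gamma_0\cup\Gamma_1^{\e}$, which is what the paper uses. Second, the $\e$-exponent on the data terms is off by one power. Because of the overall $\e^{-1}$ in front of $\hat a(\bar\xi^{\e},\bar v^{\e})$ (see (\ref{For3.10})), and since $\|\partial_{y_2}\bar\xi^{\e}\|_{L^2(\Om)}$ and $\|\e^2\bar f^{\e}\|_{L^2(\Om)}$ are $O(1)$ (not $O(\sqrt\e)$), the residual data contributions after Young are of size $1/\e$, not $O(1)$; the paper's (\ref{eq3.33}) correspondingly carries $c_9(t)/\e$ on the right. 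Consequently Gr\"onwall yields $\sup_t\e^2\!\int_{\Om}|\bar v^{\e}|^2 h^{\e}\,dy\le C$ (the paper's (\ref{eq3.34})) rather than your claimed $\sup_t\e\!\int_{\Om}|\bar v^{\e}|^2 h^{\e}\,dy\le C$, which is too strong. Your concluding step of multiplying the differential inequality by $\e$ automatically compensates, so the integrated bound $\int_0^T\hat a(\bar v^{\e},\bar v^{\e})\,dt\le C$ --- hence (\ref{E3.13})--(\ref{E3.14}) and then (\ref{E3.15})--(\ref{E3.16}) --- still holds; but the intermediate pointwise-in-time statement as written is incorrect.
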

 \begin{proof}
Taking $\Theta^{\e}= \bar{v}^{\e}(t)$  in (\ref{eqvar}), and observing that
$B (\bar{v}^{\e}(t), \bar{v}^{\e}(t), \bar{v}^{\e}(t))=
 B (\bar{\xi^{\e}}(t), \bar{v}^{\e}(t), \bar{v}^{\e}(t))=0$,
  we  obtain
\begin{eqnarray}\label{eq117}
\e{d\over 2dt} \int_{\Om}(\bar{v}^{\e}(t))^{2} h^{\e} dy
+  {(\nu + \nu_{r})\over \e} \int_{\Om}(\e\, b_{\e}\cdot \nabla
v_{1}^{\e}(t))^{2} h^{\e} dy
+ {(\nu + \nu_{r})\over \e}\int_{\Om}(\e\, b_{\e}\cdot \nabla v_{2}^{\e}(t))^{2}
h^{\e} dy
+
\nonumber\\
+ {(\nu + \nu_{r})\over \e}\left(
\int_{\Om}\left({1\over h^{\e}}{\partial v^{\e}_{1}(t)\over \partial
y_{2}}\right)^{2} h^{\e} dy
+
\int_{\Om}\left({1\over h^{\e}}{\partial v^{\e}_{2}(t)\over \partial
y_{2}}\right)^{2} h^{\e} dy\right)
+{\aal \over \e} \int_{\Om}(\e\, b_{\e}\cdot \nabla Z^{\e}(t))^{2} h^{\e} dy+
\nonumber\\
+ {\aal \over \e} \int_{\Om}
\left({1\over h^{\e}}{\partial Z^{\e}(t)\over \partial y_{2}}\right)^{2}
h^{\e}  dy
+ 4\nu_{r} \e \int_{\Om}(Z^{\e}(t))^{2} h^{\e} dy
= 2\nu_{r} \int_{\Om} \left( {\partial Z^{\e}(t)\over \partial
y_{2}}v^{\e}_{1}(t)
 -(\e b_{\e}\cdot \nabla Z^{\e}(t)) v^{\e}_{2}(t) h^{\e} \right) dy
\nonumber\\
+2\nu_{r} \int_{\Om} \left(  (\e b_{\e}\cdot \nabla v^{\e}_{2}(t))Z^{\e}(t)
h^{\e}
-  {\partial v_{1}^{\e}(t)\over \partial y_{2}} Z^{\e}(t) \right)   dy
-{(\nu + \nu_{r})\over \e}  U_{0} (t) \int_{\Om} {\cal U}'(y_{2}h^{\e})
    {\partial v_{1}^{\e}(t)\over \partial y_{2}}  dy
\nonumber\\
-{\aal\over \e} W_{0}(t)\int_{\Om} {\cal W}'(y_{2}h^{\e}) {\partial
Z^{\e}(t)\over\partial y_{2}}  dy
+2\nu_{r}W_{0}(t)  \int_{\Om} {\cal W}'(y_{2} h^{\e}) v_{1}^{\e}(t)h^{\e}  dy
- 2\nu_{r} U_{0}(t) \int_{\Om}{\cal U}'(y_{2} h^{\e}) Z^{\e}(t) h^{\e}  dy
 \nonumber\\
  - 4\nu_{r}\e W_{0}(t) \int_{\Om} {\cal W}(y_{2} h^{\e}) Z^{\e}(t)  h^{\e} dy
+  U_{0}(t)\int_{\Om}  {\cal U}(y_{2} h^{\e}) \left(
      v^{\e}_{1}(t) (\e b_{\e}\cdot \nabla v^{\e}_{1}(t))   h^{\e}
  +  v^{\e}_{2}(t)  {\partial v^{\e}_{1}(t)\over \partial y_{2}}\right) dy
   \nonumber\\
 + W_{0}(t) \int_{\Om} \left(  v^{\e}_{1}(t) (\e b_{\e}\cdot \nabla Z^{\e}(t))
 {\cal
W}(y_{2} h^{\e})  h^{\e}
   +  v^{\e}_{2}(t) {\partial  Z^{\e}(t)\over \partial y_{2}} {\cal W}(y_{2}
h^{\e}) \right)  dy
  \nonumber\\
 -\e  U'_{0}(t)\int_{\Om}{\cal U}(y_{2} h^{\e}) v^{\e}_{1}(t)  h^{\e} dy
-\e  W'_{0}(t)\int_{\Om}{\cal W}(y_{2} h^{\e}) Z^{\e}(t)  h^{\e} dy
  \nonumber\\
+ \e \int_{\Om}g^{\e}(t)Z^{\e}(t) h^{\e}  dy
+ \e \int_{\Om} (f^{\e}_{1}(t)v^{\e}_{1}(t)+ f^{\e}_{2}(t)v^{\e}_{2}(t))  h^{\e}dy.
\end{eqnarray}
Now we estimate the right hand side  of the above inequality
(\ref{eq117}). \newline
Let  $\lambda_{j}$ for $1\leq j\leq 16$, which must be some strictly
positive constants, such that

\begin{eqnarray}\label{es1}
2\nu_{r} \left|\int_{\Om} {\partial Z^{\e}(t)\over \partial y_{2}} v^{\e}_{1}(t)
dy\right|
\leq {\nu_{r} \over \e \lambda_{1}}\|{1\over  h^{\e} }
                  {\partial Z^{\e}(t)\over \partial y_{2}}\|^{2}_{L^{2}(\Om)}
+ \e \nu_{r}\lambda_{1}h^{2}_{M}\|v^{\e}_{1}(t)\|^{2}_{L^{2}(\Om)},
\end{eqnarray}
\begin{eqnarray}\label{es2}
 2\nu_{r} \left|\int_{\Om} (\e b_{\e}\cdot \nabla Z^{\e}(t)) \, v^{\e}_{2}(t)
h^{\e}  dy\right|
\leq {\nu_{r}\over \e \lambda_{2}}\|(\e b_{\e}\cdot \nabla
Z^{\e}(t)\|^{2}_{L^{2}(\Om)}
\nonumber\\
+ \nu_{r}\e \lambda_{2}h^{2}_{M}\|v^{\e}_{2}(t)\|^{2}_{L^{2}(\Om)},
\end{eqnarray}
\begin{eqnarray}\label{es3}
2\nu_{r}\left| \int_{\Om}   (\e b_{\e}\cdot \nabla v^{\e}_{2}(t))\,
Z^{\e}(t)h^{\e}  dy\right|
\leq {\nu_{r}\over \e \lambda_{3}}
\|(\e b_{\e}\cdot \nabla v^{\e}_{2}(t))\|^{2}_{L^{2}(\Om)}
\nonumber\\
 +\e \lambda_{3}\nu_{r}h^{2}_{M} \|Z^{\e}(t)\|^{2}_{L^{2}(\Om)},
\end{eqnarray}
\begin{eqnarray}\label{es4}
2\nu_{r} \left|\int_{\Om} {\partial v_{1}^{\e}(t)\over \partial y_{2}}
\, Z^{\e}(t) dy \right|
\leq   {\nu_{r}\over \e \lambda_{4}}
\|{1\over h^{\e}}{\partial v_{1}^{\e}(t)\over \partial y_{2}}\|^{2}_{L^{2}(\Om)}
+ \nu_{r}\e \lambda_{4}h^{2}_{M}\|Z^{\e}(t)\|^{2}_{L^{2}(\Om)}
\end{eqnarray}
\begin{eqnarray}\label{es5}
 {1\over \e}  \left|\int_{\Om}  U_{0}(t){\cal U}'(y_{2}h^{\e})
{\partial v_{1}^{\e}(t)\over \partial y_{2}} dy
\right|
\leq  {U_{0}(t)^{2}\|{\cal U}'\|^{2}_{L^{2}(0, h_m )}h_{M} L \over 2\e
\lambda_{5}(\nu +\nu_{r})}
\nonumber\\
+ {\lambda_{5}(\nu +\nu_{r})\over 2\e}
\|{1\over h^{\e}} {\partial v_{1}^{\e}(t)\over \partial
y_{2}}|^{2}_{L^{2}(\Om)},
\end{eqnarray}
\begin{eqnarray}\label{es6}
{\aal\over \e} \left|
\int_{\Om}W_{0}(t){\cal W}'(t , y_{2}h^{\e}) {\partial Z^{\e}(t)\over\partial
y_{2}}  h^{\e} dy\right|
\leq
{\aal W_{0}(t)^{2}\|{\cal W}'\|^{2}_{L^{2}(0, h_m )}h_{M} L\over
2\lambda_{6}\e}
\nonumber\\+ {\lambda_{6}\aal\over 2\e}
\|{1\over h^{\e}}{\partial Z^{\e}(t)\over \partial y_{2}}\|^{2}_{L^{2}(\Om)}
 \end{eqnarray}
\begin{eqnarray}\label{es11}
2\nu_{r}\left| W_{0}(t)  \int_{\Om}{\cal W}'(y_{2} h^{\e}) v_{1}^{\e}(t)  h^{\e}
dy\right|
\leq {\nu_{r}W_{0}(t)^{2}\|{\cal W}'\|^{2}_{L^{2}(0, h_m )}L\over \e \lambda_{7}
h_M}
\nonumber\\+\nu_{r} \lambda_{7} \e  h^{2}_{M}\|v_{1}^{\e}(t)\|^{2}_{L^{2}(\Om)},
 \end{eqnarray}
\begin{eqnarray}\label{es12}
2\nu_{r}\left|U_{0}(t) \int_{\Om} {\cal U}'(y_{2} h^{\e}) Z^{\e}(t) h^{\e}
dy\right|
\leq {\nu_{r}U_{0}(t)^{2} \|{\cal U}'\|^{2}_{L^{2}(0, h_m)} h_{M} L
 \over \e \lambda_{8}}
\nonumber\\
+\nu_{r} \lambda_{8} \e \|Z^{\e}(t))^{2}\|^{2}_{L^{2}(\Om)},
 \end{eqnarray}
\begin{eqnarray}\label{es14}
\left|U_{0}(t)\int_{\Om}  v^{\e}_{1}(t) (\e b_{\e}\cdot \nabla v^{\e}_{1}(t))
{\cal U}(y_{2}h^{\e})  h^{\e} dy\right|
&\leq& {\e U^{2}_{0}(t) \|{\cal U}\|^{2}_{\infty} h^{2}_{M}
\lambda_{9}\over 2 }
 \|v^{\e}_{1}(t)\|^{2}_{L^{2}(\Om)}
 \nonumber\\
&&
+ {1\over 2\e\lambda_{9}} \|(\e b_{\e}\cdot \nabla
v^{\e}_{1})\|^{2}_{L^{2}(\Om)},
\end{eqnarray}

\begin{eqnarray}\label{es15}
\left|U_{0}(t)\int_{\Om} v^{\e}_{2}(t) {\partial v^{\e}_{1}\over \partial y_{2}}
{\cal U}(y_{2}h^{\e})  dy\right|
 &\leq& {1\over 2\e \lambda_{10}}
\|{1 \over h^{\e}}{\partial v^{\e}_{1}\over \partial y_{2}}\|^{2}_{L^{2}(\Om)}
\nonumber\\
&&
+
{\e\lambda_{10}U^{2}_{0}(t)\|{\cal  U}\|^{2}_{\infty}h^{2}_{M} \over 2}
\|v^{\e}_{2}(t)\|^{2}_{L^{2}(\Om)},
\end{eqnarray}
\begin{eqnarray}\label{es16}
\left|W_{0}(t)\int_{\Om} v^{\e}_{1}(t) (\e b_{\e}\cdot \nabla Z^{\e}) {\cal
W}(y_{2}h^{\e})
h^{\e} dy\right|
&\leq& {1\over 2\e  \lambda_{11}} \|(\e b_{\e}\cdot \nabla
Z^{\e})\|^{2}_{L^{2}(\Om)}
\nonumber\\
&&
+{\e  \lambda_{11}W^{2}_{0}(t)\|{\cal  W}\|^{2}_{\infty}h^{2}_{M} \over 2}
\|v^{\e}_{1}(t)\|^{2}_{L^{2}(\Om)},
\end{eqnarray}
\begin{eqnarray}\label{es17}
\left|W_{0}(t)\int_{\Om}  v^{\e}_{2}(t) {\partial Z^{\e}\over \partial y_{2}}
 {\cal  W}(y_{2}h^{\e}) dy
\right|
&\leq&  {1\over 2\e  \lambda_{12}}
\|{1\over h^{\e}} {\partial  Z^{\e}\over \partial y_{2}} \|^{2}_{L^{2}(\Om)}
\nonumber\\
&&+{\e  \lambda_{12}W^{2}_{0}(t)\|{\cal  W}\|^{2}_{\infty}h^{2}_{M}
\over 2}\|v^{\e}_{2}(t)\|^{2}_{L^{2}(\Om)},
\end{eqnarray}
\begin{eqnarray}\label{es131}
4\nu_{r}\e\left|W_{0}(t) \int_{\Om}  {\cal  W}(y_{2}h^{\e}) Z^{\e}(t)  h^{\e}
dy\right|
\leq  {2\nu_{r}\e W_{0}(t)^{2}  \|{\cal  W}\|^{2}_{L^{2}(0,h_m)}h_{M} L
\over \lambda_{13}}
\nonumber \\
+ 2 \e \lambda_{13}\nu_{r} \|Z^{\e}(t)\|^{2}_{L^{2}(\Om)},
 \end{eqnarray}
\begin{eqnarray}\label{es13a}
\e \left| U'_{0}(t)\int_{\Om} {\cal U}(y_{2}h^{\e})v^{\e}_{1}(t)h^{\e}
dy\right|
\leq \frac{\e}{2}  h^{2}_{M}\|v^{\e}_{1}(t)\|^{2}_{L^{2}(\Om)}
\nonumber \\
+ \frac{\e L}{2
h_M} |U'_{0}(t)|^{2}
\|{\cal  U}\|^{2}_{L^{2}(0, h_m)},
 \end{eqnarray}
\begin{eqnarray}\label{es13aa}
\e |W'_{0}(t)\int_{\Om} {\cal W}(y_{2}h^{\e}) Z^{\e}(t) h^{\e} dy|
 \leq \frac{\e}{2}  h^{2}_{M}\|Z^{\e}(t)\|^{2}_{L^{2}(\Om)} \nonumber \\+ \frac{\e L}{2 h_M}
|W'_{0}(t)|^{2}\|
  \|{\cal  W}\|^{2} _{L^{2}(0, h_m)}.
 \end{eqnarray}
Finally
\begin{eqnarray*}
\e \left|\int_{\Om}g^{\e}(t)Z^{\e}(t) h^{\e}  dy\right|
\le \frac{h_M}{\e} \|\e^2 g^{\e} \|_{L^2(\Om)} \|Z^{\e}\|_{L^2(\Om)}.
\end{eqnarray*}
By using Poincar\'e's inequality and the boundary conditions
(\ref{eqn:er2.8a})-(\ref{eqn:er2.11b}), we get
\begin{eqnarray*}
\|Z^{\e}\|_{L^2(\Om)} \le \| \frac{\partial Z^{\e}}{\partial y_2} \|_{L^2(\Om)}
\quad \mbox{\rm a.e. in} \ (0,T),
\end{eqnarray*}
and
\begin{eqnarray}\label{es13}
&& \e \left|\int_{\Om}g^{\e}(t)Z^{\e}(t) h^{\e}  dy\right|
 \le  \frac{h_M}{\e} \|\e^2 g^{\e} \|_{L^2(\Om)} \|\frac{\partial
Z^{\e}}{\partial y_2}\|_{L^2(\Om)}
\nonumber \\
&&
 \le \frac{h_M^2}{\e }  \|\e^2 g^{\e} \|_{L^2(\Om)} \|\frac{1}{h^{\e}}
\frac{\partial Z^{\e}}{\partial y_2}\|_{L^2(\Om)}
\nonumber \\
&& \le \frac{h_M^4}{2 \lambda_{14}  \e } \|\e^2 g^{\e} \|_{L^2(\Om)}^2 +
\frac{\lambda_{14}}{2 \e}   \|\frac{1}{h^{\e}} \frac{\partial Z^{\e}}{\partial
y_2}\|_{L^2(\Om)}^2.
\end{eqnarray}
Similarly
\begin{eqnarray} \label{ess14}
  \e \left|\int_{\Om}f_1^{\e}(t)v_1^{\e}(t) h^{\e}  dy\right| \le
\frac{h_M^4}{2 \lambda_{15}  \e}  \|\e^2 f_1^{\e} \|_{L^2(\Om)}^2 +
\frac{\lambda_{15}}{2 \e}   \|\frac{1}{h^{\e}} \frac{\partial v_1^{\e}}{\partial
y_2}\|_{L^2(\Om)}^2,
\end{eqnarray}
and
\begin{eqnarray} \label{ess15}
  \e \left|\int_{\Om}f_2^{\e}(t)v_2^{\e}(t) h^{\e}  dy\right| \le
\frac{h_M^4}{2 \lambda_{16}  \e } \|\e^2 f_2^{\e} \|_{L^2(\Om)}^2 +
\frac{\lambda_{16}}{2 \e}   \|\frac{1}{h^{\e}} \frac{\partial v_2^{\e}}{\partial
y_2}\|_{L^2(\Om)}^2.
\end{eqnarray}
So from (\ref{eq117}) and  (\ref{es1})-(\ref{es13})  we get
\begin{eqnarray}\label{eq3.33}
\frac{\e}{2} {d\over dt}([{\bar{v}}^{\e}(t)]^{2})
+ {c_{1}\over \e}\|(\e\, b_{\e}\cdot \nabla v_{1}^{\e}(t)\|^{2}_{L^{2}(\Om)}
+ {c_{2}\over \e}\|(\e\, b_{\e}\cdot \nabla v_{2}^{\e}(t)\|^{2}_{L^{2}(\Om)}
\nonumber\\
+ {c_{3}\over \e} \|{1\over h^{\e}}{\partial Z^{\e}(t)\over \partial
y_{2}}\|^{2}_{L^{2}(\Om)}
+ {c_{4}\over \e}\|{1\over h^{\e}}{\partial v^{\e}_{1}(t)\over \partial
y_{2}}\|^{2}_{L^{2}(\Om)}
+ {c_{5}\over \e} h_m \|{1\over h^{\e}}{\partial v^{\e}_{2}(t)\over
\partial y_{2}}\|^{2}_{L^{2}(\Om)}
\nonumber\\
+ {c_{6}\over \e}\|(\e\, b_{\e}\cdot \nabla Z^{\e}(t)\|^{2}_{L^{2}(\Om)}
+ \nu_{r}\e c_{7} \|Z^{\e}(t)\|^{2}_{L^{2}(\Om)} \leq
\e c_{8}(t)[{\bar{v}^{\e}}(t)]^{2} +   {c_{9}(t)\over\e}
\end{eqnarray}
where
\begin{eqnarray*}
 &&c_{1}=(\nu+ \nu_{r}) h_m  - {1\over 2\lambda_{9}}, \qquad
 c_{2}= (\nu + \nu_{r}) h_m   - {\nu_{r}\over\lambda_{3}},
\nonumber\\
&&c_{3}=\aal h_m  - {\nu_{r}\over \lambda_{1}}- {\lambda_{6}\aal \over 2}
-  {1\over 2  \lambda_{12}} - \frac{\lambda_{14}}{2}, \quad
c_{4}=(\nu + \nu_{r}) h_m   - {\lambda_{5}(\nu + \nu_{r})^2 \over 2}
 -{\nu_{r}\over \lambda_{4}} -{1 \over 2\lambda_{10}} -
\frac{\lambda_{15}}{2},
\nonumber\\
&& c_{5} = (\nu + \nu_r) h_m - \frac{\lambda_{16}}{2} , \quad
    c_{6}=\aal h_m  -  {1\over 2  \lambda_{11}} -{\nu_{r}\over \lambda_{2}},\quad
 c_{7}=4 h_m   - \lambda_{8} - 2 \lambda_{13}, \nonumber \\
&&
c_{8}(t)=\max\{A(t) , B(t), h_M^2 (1 + \lambda_3 + \lambda_4) \}
\end{eqnarray*}
 with
$$A(t)=h_{M}^{2}
\left(1+\nu_{r}\lambda_{1}+ \nu_{r}\lambda_{7} +
{\lambda_{9} U_{0}^{2}(t) \|{\cal U}\|^{2}_{\infty}
+ \lambda_{11}W^{2}_{0}(t)\|{\cal W}\|^{2}_{\infty}  \over 2}\right)$$
$$B(t)=h_{M}^{2}\left( \frac{1}{2} + \nu_{r}\lambda_{2} +
{\lambda_{10}U^{2}_{0}(t)\|{\cal U}\|^{2}_{\infty}
 \over 2}
+ {\lambda_{12}W^{2}_{0}(t) \|{\cal W}\|^{2}_{\infty}\over 2}  \right)$$
and
\begin{eqnarray*}
{c_{9}(t)\over\e}&=&   {U^{2}_{0}(t) \|{\cal U}'\|^{2}_{L^{2}(0, h_m)}
h_{M} L
 \over 2\e\lambda_{5}  }
 + {\nu_{r}   U^{2}_{0}(t) \|{\cal U}\|^{2}_{L^{2}(0, h_m)} h_{M} L
\over \e\lambda_{8}  }
+{\aal   W^{2}_{0}(t)  \|{\cal W}'\|^{2}_{L^{2}(0, h_m)} h_{M} L    \over 2
\e \lambda_{6}  }
 \nonumber\\
&&
+ {\nu_{r} W^{2}_{0}(t)  \|{\cal W}'\|^{2}_{L^{2}(0, h_m )} L \over \e
\lambda_{7} h_M}
+ {2\nu_{r} \e W_{0}(t)^{2} \|{\cal W}\|^{2}_{L^{2}(0, h_m )}    h_{M} L \over
\lambda_{13}  }
+ \frac{\e L}{2 h_M} |U'_{0}(t)|^{2}\|{\cal U}\|^{2}_{L^{2}(0, h_m)}
 \nonumber\\
&&
+ \frac{\e L}{2 h_M} |W'_{0}(t)|^{2} \|{\cal W}\|^{2}_{L^{2}(0, h_m)}
+ {h_M^4\over 2  \e}\left(\frac{1}{\lambda_{15}} \|\e^2 f_{1}^{\e}
(t)\|^{2}_{L^{2}(\Omega)}
+ \frac{1}{\lambda_{16}} \|\e^2 f_{2}^{\e} (t)\|^{2}_{L^{2}(\Omega)}
+ \frac{1}{\lambda_{14}} \|\e^2 g^{\e} (t)\|^{2}_{L^{2}(\Omega)}\right).
\end{eqnarray*}
Each  $c_{i}$ for $i=1,\cdots, 6$  must be strictly positive, which  is
possible for example with
\begin{eqnarray*}
\lambda_{1}= \lambda_{2}= {4\nu_{r}\over \aal h_m },\qquad \qquad \lambda_{3}=
\lambda_{4}= \frac{1}{h_m} ,\qquad
 \qquad \lambda_{5}={\nu h_m \over 2(\nu + \nu_{r})^2},
\nonumber\\
 \qquad \lambda_{6}= \lambda_{8}=\lambda_{13}={h_m \over 2},\qquad
\lambda_{9}={4\over (\nu + \nu_{r}) h_m },  \qquad \lambda_{10}={2\over \nu h_m
},  \qquad
\lambda_{11}=\lambda_{12}={2\over \aal h_m }, \nonumber \\
\qquad \lambda_{14} = \frac{ \alpha h_m}{8} , \qquad \qquad \lambda_{15} =
\frac{ \nu h_m}{4}, \qquad \qquad \lambda_{16} = \frac{ (\nu + \nu_r) h_m}{2} .
\end{eqnarray*}
Note that $\lambda_{7}$  remains  arbitrary and can be taken as $\lambda_7=1$.
So
from (\ref{eq3.33})  we get
\begin{eqnarray*}
\frac{\e^{2}}{2} {d\over dt} ([\bar{v}^{\e}(t)]^{2})
\leq
\e^{2}  c_{7}(t) [\bar{v}^{\e}(t)]^{2}
+ c_{8}(t).
\end{eqnarray*}
As $U_{0}$ and $W_{0}$ belong to  $H^{1}(0 , T)$, then $c_8$ is bounded in
$L^1(0,T)$ independently of $\e$ and  by Gr\"onwall's
lemma we deduce that
 there exists a constant $C$ independent of $\e$ such that
\begin{eqnarray}\label{eq3.34}
\e^2 [\bar{v}^{\e}(t)]^{2} \leq C
\quad \forall t\in [0 , T].
\end{eqnarray}
Now we integrate the inequality (\ref{eq3.33}) over the time interval  $(0 , s)$
for $0< s\leq T$,
we deduce
\begin{eqnarray}\label{eq3.35}
\frac{\e^{2}}{2} [{\bar{v}}^{\e}(s)]^{2}
+
C_{1}\int_{0}^{s}\|(\e\, b_{\e}\cdot \nabla v_{1}^{\e}(t)\|^{2} _{L^{2}(\Om)}
 +\|(\e\, b_{\e}\cdot \nabla v_{2}^{\e}(t)\|^{2}_{L^{2}(\Om)}+
+ \|(\e\, b_{\e}\cdot \nabla Z^{\e}(t)\|^{2}_{L^{2}(\Om)} dt
\nonumber\\
+ C_{2}\int_{0}^{s}
\|{1\over h^{\e}}{\partial Z^{\e}(t)\over \partial y_{2}}\|^{2}_{L^{2}(\Om)}
+
\|{1\over h^{\e}}{\partial v^{\e}_{1}(t)\over \partial y_{2}}\|^{2}_{L^{2}(\Om)}
+ \|{1\over h^{\e}}{\partial v^{\e}_{2}(t)\over \partial
y_{2}}\|^{2}_{L^{2}(\Om)} dt
\nonumber\\
+ \e^{2} \nu_r c_6 \int_{0}^{s} \|(Z^{\e}(t)\|^{2}_{L^{2}(\Om)} dt
\leq
\e^{2}  \int_{0}^{s} c_{7}(t)[{\bar{v}}^{\e}(t)]^{2}dt +   \int_{0}^{s}
c_{8}(t) dt
+ \frac{\e^{2}}{2} [{\bar{v}}^{\e}(0)]^{2},
\end{eqnarray}

where $C_{1}= \min\{c_{1}, c_{2},  c_{6}\}$,
$C_{2}= \min\{c_{3},c_{4}, c_{5} \}$, are two constants independent of $\e$.
Observing that $c_7 \in L^{\infty}(0,T)$ and
\begin{eqnarray*}
 \int_{0}^{T}\int_{\Om} \left({1\over h^{\e}}{\partial v^{\e}_{i}(t)\over
\partial y_{2}}\right)^{2}  \, dy dt
\geq {1\over h_{M}^{2}}\|{\partial v^{\e}_{i}\over \partial
y_{2}}\|_{L^{2}((0,T) \times\Om)}^{2}
\end{eqnarray*}
 we deduce (\ref{E3.13}) and (\ref{E3.14}) from (\ref{eq3.34}). Moreover, from
(\ref{not})
\begin{eqnarray}\label{not2}
b_{\e}\cdot\nabla= {\partial \over \partial y_{1}} - {y_{2}\over
h^{\e}}{\partial h^{\e}\over \partial y_{1}}  {\partial \over \partial
y_{2}}
\quad \mbox{ with }\quad
|{\partial h^{\e}\over \partial y_{1}}| =|{\partial h\over \partial
y_{1}} + {1\over \e} {\partial h\over \partial \eta_{1}}|\leq {C\over \e}.
\end{eqnarray}
Thus we have
\begin{eqnarray*}
 \int_{0}^{T}\int_{\Om} \left(\e{\partial v^{\e}_{i}(t)\over \partial
y_{1}}\right)^{2}  \, dy dt
=\int_{0}^{T}\int_{\Om}  \left((\e b_{\e}\cdot\nabla v^{\e}_{i}(t))
 + {y_{2}\e \over h^{\e}}{\partial h^{\e}\over \partial y_{1}}  {\partial
v^{\e}_{i}(t) \over \partial y_{2}}     \right)^{2}  \,  dy dt
\nonumber\\
\leq 2 \|(\e b_{\e}\cdot\nabla v^{\e}_{i})\|^{2}_{L^{2}((0 ,
T)\times\Om)}
 + 2 {C\over h_{m}}\| {\partial v^{\e}_{i} \over \partial
y_{2}}\|^{2}_{L^{2}((0
, T)\times\Om)}
\end{eqnarray*}
and a similar estimate holds for $Z^{\e}$.
Finally  with (\ref{E3.13}) and   (\ref{E3.14}) we deduce  (\ref{E3.15}).
Next, using again the boundary conditions (\ref{eqn:er2.8a})-(\ref{eqn:er2.11b})
and    Poincar\'e's inequality, we get
\begin{eqnarray*}
 \|v^{\e}_{i}\|^{2}_{L^{2}((0 , T)\times\Om)}\leq  \int_{0}^{T}\int_{\Om}
\| {\partial v^{\e}_{i} \over \partial y_{2}}\|^{2} dy dt=
\| {\partial v^{\e}_{i} \over \partial y_{2}}\|^{2}_{L^{2}((0 , T)\times\Om)}
\end{eqnarray*}
and  we deduce (\ref{E3.16}) from  (\ref{E3.14}).
\end{proof}


\begin{proposition}\label{prop2}
Assume that the  proposition {\rm\ref{pro1}} holds. Then
there exists a constant $C>0$ which does not depends on  $\e$,
such that  we have
\begin{eqnarray}\label{E3.18}
 \e^{2} \|p^{\e}\|_{H^{-1}(0 , T; L^{2}(\Om))} \leq C.
\end{eqnarray}
  \end{proposition}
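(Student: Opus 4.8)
The plan is to reproduce, in an $\e$--quantitative form, the pressure construction used in the proof of Theorem~\ref{th2.1}. By the embedding $H^1_0(0,T;L^2(\Om))\hookrightarrow{\cal C}([0,T];L^2(\Om))$ and by subtracting the mean over $\Om$, exactly as at the end of that proof, it suffices to show that for every $\omega\in H^1_0(0,T;L^2_0(\Om))$
\[
\Bigl|\int_0^T(p^\e(t),\omega(t))_{L^2(\Om)}\,dt\Bigr|\le \frac{C}{\e^{2}}\,\|\omega\|_{H^1_0(0,T;L^2(\Om))},
\]
with $C$ independent of $\e$. To make this pairing appear one takes in (\ref{eqvar}) a test function $\Theta^\e=(\varphi^\e,0)$, with $\varphi^\e=\varphi^\e(t,\cdot)\in V^\e$ chosen so that the pressure term of (\ref{eqvar}), namely $\int_\Om p^\e(t)\bigl(\e\,b_\e\cdot\nabla\varphi^\e_1+\frac{1}{h^\e}\frac{\partial\varphi^\e_2}{\partial y_2}\bigr)h^\e\,dy$ (cf.\ (\ref{pression})), equals $(p^\e(t),\omega(t))_{L^2(\Om)}$; in view of (\ref{pression}) and (\ref{not}) this amounts to ${\rm div}_z\,\varphi^\e=\frac{1}{\e h^\e}\,\omega$, read on $\Om^\e$.

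The construction of $\varphi^\e$ is the core of the argument. Read on $\Om^\e$, the datum $g^\e:=\frac{1}{\e h^\e}\,\omega$ has zero mean there, since $\int_{\Om^\e}g^\e\,dz=\int_\Om\omega\,dy=0$, and $\|g^\e(t)\|_{L^2(\Om^\e)}\le C\e^{-1/2}\|\omega(t)\|_{L^2(\Om)}$. Applying on the thin domain $\Om^\e$ the right inverse of the divergence of \cite{JLLions78}, while making explicit the dependence of its norm on the height $\e h^\e$, one gets $\varphi^\e\in H^1_0(0,T;V^\e)$, depending linearly on $\omega$, with ${\rm div}_z\,\varphi^\e=g^\e$, $\|\nabla_z\varphi^\e(t)\|_{L^2(\Om^\e)}\le C\e^{-1}\|g^\e(t)\|_{L^2(\Om^\e)}$, and, since $\varphi^\e$ vanishes on $\Gamma_0$, $\|\varphi^\e(t)\|_{L^2(\Om^\e)}\le C\e\,\|\nabla_z\varphi^\e(t)\|_{L^2(\Om^\e)}$ by Poincar\'e's inequality in the $z_2$--direction; the same bounds hold for $\partial_t\varphi^\e$ in terms of $\partial_t\omega$. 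Rewritten in the variables of $\Om$, these yield, with $C$ independent of $\e$,
\[
\|\varphi^\e\|_{L^2((0,T)\times\Om)}\le \frac{C}{\e}\,\|\omega\|_{L^2((0,T)\times\Om)},\qquad \bigl\|\e\,b_\e\cdot\nabla\varphi^\e_i\bigr\|_{L^2((0,T)\times\Om)}+\Bigl\|\frac{\partial\varphi^\e_i}{\partial y_2}\Bigr\|_{L^2((0,T)\times\Om)}\le \frac{C}{\e}\,\|\omega\|_{L^2((0,T)\times\Om)},
\]
together with the analogous estimates for $\partial_t\varphi^\e$, and (back on $\Om^\e$) $\|\varphi^\e\|_{L^2((0,T)\times\Om^\e)}\le C\e^{-1/2}\|\omega\|_{L^2((0,T)\times\Om)}$.

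With this choice of $\Theta^\e$ one integrates (\ref{eqvar}) over $(0,T)$ and integrates the term $\e\int_\Om\frac{d\bar v^\e}{dt}\Theta^\e h^\e\,dy$ by parts in time (legitimate because $\varphi^\e(0,\cdot)=\varphi^\e(T,\cdot)=0$). This expresses $\int_0^T(p^\e,\omega)_{L^2(\Om)}\,dt$ as a finite sum: the two viscous terms $\frac1\e\int_0^T\hat a(\bar v^\e,\Theta^\e)$ and $\frac1\e\int_0^T\hat a(\bar\xi^\e,\Theta^\e)$, the trilinear terms $\int_0^T\hat B(\bar v^\e,\bar v^\e,\Theta^\e)$, $\int_0^T\hat B(\bar\xi^\e,\bar v^\e,\Theta^\e)$, $\int_0^T\hat B(\bar v^\e,\bar\xi^\e,\Theta^\e)$, the rotation terms $\int_0^T\hat{{\cal R}}(\bar v^\e,\Theta^\e)$, $\int_0^T\hat{{\cal R}}(\bar\xi^\e,\Theta^\e)$, the time--derivative terms $\e\int_0^T\!\!\int_\Om\bar v^\e\,\partial_t\varphi^\e\,h^\e$ and $\e\int_0^T\!\!\int_\Om\partial_t\bar\xi^\e\,\varphi^\e\,h^\e$, and the source term $\e\int_0^T\!\!\int_\Om\bar f^\e\,\varphi^\e\,h^\e$. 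Each is estimated by Cauchy--Schwarz and H\"older in time, using the a priori bounds (\ref{E3.13})--(\ref{E3.16}) of Proposition~\ref{pro1}, Poincar\'e's inequality in $y_2$, a thin--domain Ladyzhenskaya inequality of the form $\|u\|_{L^4(\Om^\e)}\le C\e^{1/4}\|\nabla_z u\|_{L^2(\Om^\e)}$ ($C$ independent of $\e$, for $u$ vanishing on $\Gamma_0$) for the trilinear terms, and the standing assumptions on $\e^2\bar f^\e$, $U_0$ and $W_0$. The viscous terms are the bottleneck: pairing (\ref{E3.13})--(\ref{E3.14}) for $\bar v^\e$ with the bounds just obtained for $\varphi^\e$ gives $|\frac1\e\int_0^T\hat a(\bar v^\e,\Theta^\e)\,dt|\le \frac{C}{\e^{2}}\|\omega\|_{L^2((0,T)\times\Om)}$ (and likewise for $\hat a(\bar\xi^\e,\Theta^\e)$), and one checks that every remaining term is $O(\e^{-2})$ or better. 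Summing the estimates yields the displayed inequality, hence (\ref{E3.18}).

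The main obstacle is the second step: solving the divergence equation on the thin, rapidly oscillating domain $\Om^\e$ with an $H^1$--bound whose dependence on $\e$ is explicit and of the right order — the crude Bogovskii--type constant is far too large — and establishing, in parallel, the thin--domain Poincar\'e and Ladyzhenskaya inequalities that keep the nonlinear term from overtaking the $\e^{-2}$ coming from the viscous contribution. The rest is a careful but essentially routine adaptation of the arguments in the proof of Theorem~\ref{th2.1} to the present $\e$--dependent setting.
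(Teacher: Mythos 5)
Your proposal follows a genuinely different route from the paper, and it contains a gap that you yourself flag but do not close. You aim to construct, for each $\omega\in H^1_0(0,T;L^2_0(\Om))$, a test function $\varphi^\e\in V^\e$ with $\mathrm{div}_z\,\varphi^\e=\frac{1}{\e h^\e}\omega$ on $\Om^\e$ and with $\|\nabla_z\varphi^\e\|_{L^2(\Om^\e)}\lesssim \e^{-1}\|g^\e\|_{L^2(\Om^\e)}$, and to feed this into (\ref{eqvar}). The needed $\e$-quantitative Bogovskii estimate on $\Om^\e$ is, however, unproven, and it is far from routine here: after the dilation $z_2\mapsto x_2=z_2/\e$, the domain $\Om_\e$ has $O(1)$ thickness but its top boundary $x_2=h^\e(x_1)=h(x_1,x_1/\e)$ oscillates with period $\e$, so $|(h^\e)'|=O(\e^{-1})$ and the Lipschitz (or uniform-John) constant of $\Om_\e$ is \emph{not} uniform in $\e$. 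The flat-strip scaling that gives the $\e^{-1}$ rate does not transfer, because the second change of variables $y_2=x_2/h^\e(x_1)$ has $O(\e^{-1})$ off-diagonal Jacobian entries. Without a proof of the sharp Bogovskii bound on a thin, rapidly oscillating, periodic domain, the central step of your argument is missing, and the additional Ladyzhenskaya and Poincar\'e estimates you invoke are downstream of it.

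The paper avoids the thin/rough Bogovskii problem entirely. Working on the \emph{fixed} rescaled domain $\Om$, it estimates the two components of $\nabla_y p^\e$ separately: choosing $\Theta=(0,\varphi,0)$ in (\ref{eqvar}) isolates $\partial_{y_2}p^\e$ and, together with Proposition~\ref{pro1}, yields $\|\partial_{y_2}p^\e\|_{H^{-1}(0,T;H^{-1}(\Om))}\le C/\e$; choosing $\Theta=(\phi/h^\e,0,0)$ and then substituting $\varphi=y_2 (h^\e)^{-1}\partial_{y_1}h^\e\,\phi$ back into the first estimate disentangles the $b_\e\cdot\nabla$ operator and gives $\|\partial_{y_1}p^\e\|_{H^{-1}(0,T;H^{-1}(\Om))}\le C/\e^2$. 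The conclusion then follows from the Ne\v{c}as/Temam inequality $\|q\|_{L^2_0(\Om)}\lesssim\|\nabla q\|_{H^{-1}(\Om)}$ on the \emph{fixed} domain $\Om$, whose constant is $\e$-independent by construction, applied in the $H^{-1}(0,T)$ Bochner sense. In short, the paper estimates the gradient of the pressure componentwise with hand-picked test functions and uses a fixed-domain duality, whereas you propose an explicit right inverse of the divergence on $\Om^\e$; the latter would need the quantitative thin-domain lemma you identify as the main obstacle, which the paper's argument is specifically designed to sidestep.
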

\begin{proof}
Let  $\varphi\in {\mathcal D}(0,T) \times {\mathcal D}(\Om)$,  then choose
$\Theta= ( 0, \varphi(t), 0)$ as a test-function
 in (\ref{eqvar}) and  multiply the two sides by $\e$:  we obtain
 \begin{eqnarray}\label{eq3.43}
\e \int_{0}^{T}\int_{\Om} p^{\e}
{\partial \varphi\over \partial  y_{2}} dy dt
=
-\e^{2}\int_{0}^{T} \int_{\Om} v^{\e}_{2} {\partial\varphi\over \partial t}
h^{\e}
dy dt
\nonumber\\
+ (\nu+\nu_{r})\sum_{i=1}^{2}
\int_{0}^{T} \int_{\Om}\left(
h^{\e}(\e b_{\e}\cdot  \nabla v^{\e}_{2}) (\e b_{\e}\cdot  \nabla\varphi)
+{1\over h^{\e}}{\partial v^{\e}_{2}\over \partial y_{2}}
           {\partial\varphi\over \partial y_{2}}\right) dy dt
\nonumber\\
+\int_{0}^{T}\int_{\Om} \left(\sum_{i=1}^{2}
\e v^{\e}_{1}(\e b_{\e}\cdot \nabla v^{\e}_{2})\varphi h^{\e}
 + \e v^{\e}_{2}{\partial v^{\e}_{2}\over \partial
y_{2}}\varphi\right)  dy
dt
 + 2\nu_{r}
\int_{0}^{T} \int_{\Om}
(\e b_{\e}\cdot  \nabla Z^{\e})\varphi \e h^{\e} dy dt
\nonumber\\
+ \int_{0}^{T}\int_{\Om}
U_{\e}(\e b_{\e}\cdot  \nabla v^{\e}_{2})\varphi\e h^{\e} dy dt
- \int_{0}^{T} \int_{\Om}
\e^{2}  f^{\e}_{2}\varphi h^{\e} dy dt,
\end{eqnarray}
with $U_{\e} (t, y) = U_0(t) {\mathcal U}(y_2 h^{\e} (y_1))$ for all
$(t, y_1, y_2) \in [0,T] \times \Om$. Using (\ref{E3.13})-(\ref{E3.16}),  we  get
 \begin{eqnarray}\label{equPy1}
 |\int_{0}^{T} \int_{\Om}
p^{\e} {\partial \varphi\over \partial  y_{2}} dy dt
|\leq  {C\over \e} \|\varphi\|_{H^{1}(0 , T ,
H^{1}_{0}(\Om))} \quad \forall \varphi\in {\mathcal D}(0,T) \times {\mathcal
D}(\Om) .
 \end{eqnarray}
Now let $\phi \in {\mathcal D}(0,T) \times {\mathcal D}(\Om)$ and choose $\Theta
= ( \frac{\phi}{h^{\e}}, 0,0)$ as a test-function in (\ref{eqvar}), then
multiply the two sides by $\e$: we obtain
 \begin{eqnarray}\label{eq3.45}  
\e^{2} \int_{0}^{T}\int_{\Om} p^{\e}
\left( {\partial\phi \over \partial y_{1}}
- \frac{\partial }{\partial y_2} \left( y_2 \frac{1}{h^{\e}} \frac{\partial
h^{\e}}{\partial y_1} \phi \right) \right)
 dy dt
=
-\e^{2}\int_{0}^{T} \int_{\Om} v^{\e}_{1} {\partial\phi\over \partial t}
dy dt
\nonumber\\
+ (\nu+\nu_{r})
\int_{0}^{T} \int_{\Om} \e
(\e b_{\e}\cdot  \nabla v^{\e}_{1})
 \left(
\frac{\partial \phi}{\partial y_1} - \frac{\partial h^{\e}}{\partial y_1}
\frac{1}{h^{\e}} \phi - y_2 \frac{1}{h^{\e}} \frac{\partial h^{\e}}{\partial
y_1} \frac{\partial \phi}{\partial y_2}
\right) dy dt \nonumber\\
 +
(\nu+\nu_{r})
\int_{0}^{T} \int_{\Om}
{1\over ( h^{\e} )^2}{\partial v^{\e}_{1}\over \partial y_{2}}
           {\partial\phi\over \partial y_{2}}  dy dt
+\int_{0}^{T}\int_{\Om} \left(
\e v^{\e}_{1}(\e b_{\e}\cdot  \nabla v^{\e}_{1})\phi
 + \frac{\e}{h^{\e}} v^{\e}_{2}{\partial v^{\e}_{1}\over \partial
y_{2}}\phi\right)  dy
dt
\nonumber\\
+ (\nu+\nu_{r})\int_{0}^{T} \int_{\Om} \frac{1}{(h^{\e})^2}
{\partial U_{\e}\over \partial y_{2}}{\partial\phi\over \partial y_{2}}
dy
dt
-2\nu_{r}
\int_{0}^{T} \int_{\Om} \frac{\e}{h^{\e}}
{\partial Z^{\e}\over \partial y_{2}}\phi  dy dt
+ \int_{0}^{T} \int_{\Om} \e U_{\e}
(\e b_{\e}\cdot  \nabla v^{\e}_{1})\phi dy dt
\nonumber\\
- \int_{0}^{T} \int_{\Om}
\e^2  v^{\e}_{1} \left( \frac{\partial \phi}{\partial y_1} - \frac{\partial
h^{\e}}{\partial y_1} \frac{1}{h^{\e}} \phi - y_2 \frac{\partial
h^{\e}}{\partial y_1}\frac{1}{h^{\e}} \frac{\partial \phi}{\partial y_2} \right)
U_{\e}  dy dt
-  \int_{0}^{T} \int_{\Om}
\frac{\e}{h^{\e}} v^{\e}_{2}{\partial \phi \over \partial y_{2}}U_{\e} dy dt
\nonumber\\
-2\e \nu_{r}\int_{0}^{T} \int_{\Om}
 \frac{1}{h^{\e}}    {\partial W_{\e}\over \partial y_{2}}\phi dy dt
- \int_{0}^{T} \int_{\Om}
\left( f^{\e}_{1}\phi -{\partial U^{\e} \over \partial
t}\phi\right) \e^{2}  dy dt,
\end{eqnarray}
where $W_{\e} (t, y) = W_0(t) {\mathcal W}(y_2 h^{\e} (y_1))$ for all $(t, y_1,
y_2) \in [0,T] \times \Om$.

Using the estimates (\ref{E3.13})-(\ref{E3.16}) and (\ref{not2}), we infer that
\begin{eqnarray*}
|\int_{0}^{T}\int_{\Om} p^{\e}
\left( {\partial\phi \over \partial y_{1}}
- \frac{\partial }{\partial y_2} \left( y_2 \frac{1}{h^{\e}} \frac{\partial
h^{\e}}{\partial y_1} \phi \right) \right)
 dy dt | \leq {C \over \e^{2}}\|\phi\|_{H^{1}(0 , T ;  H^{1} (\Om))}.
 \end{eqnarray*}
By choosing now
$\varphi= y_{2} \frac{1}{h^{\e}} {\partial h^{\e}\over\partial y_{1}}\phi$  in
(\ref{eq3.43}),  we get
\begin{eqnarray*}
\frac{\partial \varphi}{\partial t} =  y_{2}  \frac{1}{h^{\e}} {\partial
h^{\e}\over\partial y_{1}} \frac{\partial \phi}{\partial t} , \quad
\frac{\partial \varphi}{\partial y_2} =   \frac{1}{h^{\e}} {\partial
h^{\e}\over\partial y_{1}} \left( \phi + y_2  \frac{\partial \phi}{\partial y_2}
\right)
\end{eqnarray*}
and
\begin{eqnarray*}
b_{\e} \cdot \nabla \varphi =
 - y_2 \frac{1}{(h^{\e})^2}
 \left( \frac{\partial h^{\e}}{\partial y_1} \right)^2 \left( 2 \phi + y_2
\frac{\partial \phi}{\partial y_2} \right)
 + y_2 \frac{1}{h^{\e}} \left( \frac{\partial^2 h^{\e}}{\partial y_1^2} \phi +
\frac{\partial h^{\e}}{\partial y_1} \frac{\partial \phi}{\partial y_1} \right)
.
\end{eqnarray*}
Hence
\begin{eqnarray*}
&& \| \varphi \|_{L^{\infty}(0,T; L^4(\Om))} \le \frac{C}{\e} \| \phi\|_{H^1
(0,T; H^1(\Om))}, \quad
\| \frac{\partial \varphi}{\partial t} \|_{L^2 ((0,T) \times \Om)} \le
\frac{C}{\e} \| \phi\|_{H^1 (0,T; H^1(\Om))}, \\
&&  \| \frac{\partial \varphi}{\partial y_2} \|_{L^2 ((0,T) \times \Om)} \le
\frac{C}{\e} \| \phi\|_{H^1 (0,T; H^1(\Om))}, \quad
\| \e b_{\e} \cdot \nabla \varphi  \|_{L^2 ((0,T) \times \Om)} \le \frac{C}{\e}
\| \phi\|_{H^1 (0,T; H^1(\Om))}
\end{eqnarray*}
and with (\ref{eq3.43})
\begin{eqnarray*}
|\int_{0}^{T}\int_{\Om} p^{\e}
 \frac{\partial }{\partial y_2} \left( y_2 \frac{1}{h^{\e}} \frac{\partial
h^{\e}}{\partial y_1} \phi \right)
 dy dt | \leq {C \over \e^{2}}\|\phi\|_{H^{1}(0 , T ;  H^{1}(\Om))}.
 \end{eqnarray*}
It follows that
\begin{eqnarray} \label{equPy2}
|\int_{0}^{T}\int_{\Om} p^{\e}
 {\partial\phi \over \partial y_{1}}
 dy dt | \leq {C \over \e^{2}}\|\phi\|_{H^{1}(0 , T ;  H^{1} (\Om))} \quad
\forall \phi \in {\mathcal D}(0,T) \times {\mathcal D}(\Om).
 \end{eqnarray}
By density of ${\mathcal D}(0,T) \times {\mathcal D}(\Om)$ into $H^1_0(0,T;
H^1_0(\Om))$ we get from (\ref{equPy1})-(\ref{equPy2})
\begin{eqnarray} \label{equPy3}
\| \frac{\partial p^{\e}}{\partial y_2} \|_{H^{-1} (0,T; H^{-1} (\Om))} \le
\frac{C}{\e}, \quad \| \frac{\partial p^{\e}}{\partial y_1} \|_{H^{-1} (0,T;
H^{-1} (\Om))} \le \frac{C}{\e^2}.
\end{eqnarray}
Finally  we can deduce \cite{Temam1979}
that
$\e^{2}p^{\e}$ remains in a bounded subset of $H^{-1}(0 , T ; L^{2}(\Om))$.
 \end{proof}

\renewcommand{\theequation}{4.\arabic{equation}}
\setcounter{equation}{0}
\section{Two-scale convergence properties}\label{twoscaleconv}

Since our unknown functions depend on the time variable,
  we are not in the
classical framework
of two-scale convergence as it has been introduced by G. Allaire in
\cite{allaire} or G. Nguetseng in \cite{nguetseng}. Nevertheless this technique can be easily adpated to a time-dependent framework (see for instance \cite{miller, holm97,  gilbert-Mik2000, wright00}). For the convenience of the reader we will provide a complete proof a the generalization of \cite{allaire} that will be used later for the study of the sequences $(v^{\e})_{\e >0}$, $(Z^{\e})_{\e >0}$ and $(p^{\e})_{\e >0}$.

Let us recall the following usual notations: $Y=[0,1]^2$,
$ {\mathcal C}^{\infty}_{\sharp} (Y) $ is the space of infinitely differentiable
functions in
$\br^2$ that are $Y$-periodic
and
\begin{eqnarray*}
L^2_{\sharp} (Y) = \overline{{\mathcal C}^{\infty}_{\sharp} (Y)}^{L^2(Y)}, \quad
H^1_{\sharp} (Y) = \overline{{\mathcal C}^{\infty}_{\sharp} (Y)}^{H^1(Y)}.
\end{eqnarray*}

\begin{remark} The space $L^2_{\sharp} (Y)$ coincides with the space of
functions of $L^2(Y) $ extended by $Y$-periodicity to $\br^2$.
\end{remark}

We extend the definition of the two-scale convergence as follows

\begin{definition} A sequence $(w^{\e})_{\e >0}$ of $L^2 \bigl( (0,T) \times
\Omega \bigr)$ (resp. in $H^{-1}  \bigl( 0,T; L^2( \Omega) \bigr)$) two-scale
converges to $w^0 \in L^2 \bigl( 0,T; L^2 (\Omega \times Y) \bigr)$ (resp. $w^0
\in H^{-1} \bigl( 0,T; L^2 (\Omega \times Y) \bigr)$) if and only if
\begin{eqnarray*}
\lim_{\e \to 0} \int_0^T \int_{\Omega} w^{\e} (t,y) \varphi \left( y,
\frac{y}{\e} \right) \theta (t) \, dy dt = \int_0^T \int_{\Omega \times Y} w^0 (
t, y, \eta) \varphi (y, \eta) \theta (t) \, d\eta dy dt
\end{eqnarray*}
for all $\theta \in {\mathcal D}(0,T)$, for all $\varphi \in {\mathcal D}
\bigl(\Omega; {\mathcal C}^{\infty}_{\sharp} (Y) \bigr)$.
In such a case we will denote $w^{\e} \toH w^0$.
\end{definition}

Then we obtain

\begin{theorem}
Let $(w^{\e})_{\e >0}$ be a bounded sequence of $L^2 \bigl( (0,T) \times \Omega
\bigr)$ (resp. in $H^{-1} \bigl( 0,T ;  L^2( \Omega ) \bigr)$). There exists
$w^0 \in  L^2 \bigl( 0,T; L^2 (\Omega \times Y) \bigr)$ (resp.  $w^0 \in H^{-1}
\bigl( 0,T; L^2 (\Omega \times Y) \bigr)$) such that, possibly extracting a
subsequence still denoted $(w^{\e})_{\e >0}$, we have
\begin{eqnarray*}
w^{\e} \toH w^0.
\end{eqnarray*}
\end{theorem}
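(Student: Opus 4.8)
The plan is to adapt the classical two-scale compactness theorem of Allaire to the present time-dependent setting, treating the time variable $t$ as a parameter that is carried along harmlessly. The key point is that the test functions used in the definition are of the separated form $\varphi(y,y/\e)\,\theta(t)$, so the time dependence enters only through $\theta \in {\cal D}(0,T)$ and never interacts with the oscillating variable $\eta = y/\e$. I would therefore first treat the case $(w^{\e})_{\e>0}$ bounded in $L^2((0,T)\times\Omega)$, and then indicate the (formal) modification needed for the $H^{-1}(0,T;L^2(\Omega))$ case.

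First I would set up the functional-analytic framework. Let $X = L^2\bigl(0,T;L^2(\Omega\times Y)\bigr)$, which is a separable Hilbert space, and consider the subspace of test functions $\mathcal T = \{\, \Psi(t,y,\eta) = \varphi(y,\eta)\theta(t) \; : \; \theta\in{\cal D}(0,T),\ \varphi\in{\cal D}\bigl(\Omega;{\cal C}^\infty_\sharp(Y)\bigr)\,\}$, whose linear span is dense in $X$. For each such $\Psi$ define the linear functional
\begin{eqnarray*}
\mu_{\e}(\Psi) = \int_0^T\int_\Omega w^{\e}(t,y)\,\varphi\Bigl(y,\frac{y}{\e}\Bigr)\theta(t)\,dy\,dt.
\end{eqnarray*}
The crucial estimate is the uniform bound
\begin{eqnarray*}
|\mu_{\e}(\Psi)| \le \|w^{\e}\|_{L^2((0,T)\times\Omega)}\,\Bigl\|\varphi\Bigl(\cdot,\frac{\cdot}{\e}\Bigr)\Bigr\|_{L^2(\Omega)}\,\|\theta\|_{L^2(0,T)},
\end{eqnarray*}
combined with the classical fact (see Allaire, Lemma 1.3) that for $\varphi\in{\cal D}\bigl(\Omega;{\cal C}^\infty_\sharp(Y)\bigr)$ one has $\|\varphi(\cdot,\cdot/\e)\|_{L^2(\Omega)}\to\|\varphi\|_{L^2(\Omega\times Y)}$ as $\e\to0$, so in particular $\sup_{\e}\|\varphi(\cdot,\cdot/\e)\|_{L^2(\Omega)}<\infty$. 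Since $\|w^{\e}\|_{L^2((0,T)\times\Omega)}\le M$ for all $\e$ by hypothesis, we get $\limsup_{\e\to0}|\mu_{\e}(\Psi)|\le M\,\|\Psi\|_X$.

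Next I would extract the limit. Using a countable dense subset of $\mathcal T$ (available by separability) and a diagonal argument, one extracts a subsequence, still denoted $(w^{\e})$, such that $\mu_{\e}(\Psi)$ converges for every $\Psi$ in that countable set, hence — by the uniform bound and density — for every $\Psi\in\mathcal T$, to a limit $\mu(\Psi)$. The map $\Psi\mapsto\mu(\Psi)$ is linear and satisfies $|\mu(\Psi)|\le M\|\Psi\|_X$ on the dense subspace $\mathrm{span}(\mathcal T)$, so it extends uniquely to a bounded linear functional on $X$; by the Riesz representation theorem there exists $w^0\in X = L^2\bigl(0,T;L^2(\Omega\times Y)\bigr)$ with $\mu(\Psi)=\int_0^T\int_{\Omega\times Y}w^0\,\Psi\,d\eta\,dy\,dt$. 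Unwinding the definitions, this says exactly $w^{\e}\toH w^0$. For the $H^{-1}(0,T;L^2(\Omega))$ case, the same scheme applies with the duality pairing replaced by the $H^{-1}$--$H^1_0$ pairing in time: one tests against $\varphi(y,y/\e)\theta(t)$ with $\theta\in{\cal D}(0,T)$ and uses that $\langle w^{\e},\cdot\rangle$ is uniformly bounded in $H^{-1}(0,T;L^2(\Omega))$, obtaining $w^0\in H^{-1}\bigl(0,T;L^2(\Omega\times Y)\bigr)$ in the limit.

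**The main obstacle** is essentially bookkeeping rather than a deep difficulty: one must verify carefully that the time variable genuinely decouples, i.e. that the classical estimate $\|\varphi(\cdot,\cdot/\e)\|_{L^2(\Omega)}\to\|\varphi\|_{L^2(\Omega\times Y)}$ can be applied slicewise (or directly, since $\varphi$ has no $t$-dependence) and that the separable-dual / diagonal extraction is legitimate in the Bochner space $L^2(0,T;L^2(\Omega\times Y))$. The only genuinely new point compared with the stationary theorem is checking that $\mathrm{span}(\mathcal T)$ is dense in $L^2\bigl(0,T;L^2(\Omega\times Y)\bigr)$, which follows from the density of tensor products ${\cal D}(0,T)\otimes{\cal D}\bigl(\Omega;{\cal C}^\infty_\sharp(Y)\bigr)$ and the density of ${\cal D}\bigl(\Omega;{\cal C}^\infty_\sharp(Y)\bigr)$ in $L^2(\Omega\times Y)$; once this is in hand, the rest is a routine transcription of Allaire's argument.
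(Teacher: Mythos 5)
Your approach is, at bottom, the same as the paper's: both adapt Allaire's Theorem~1.2 by treating the time variable as a passive parameter, extracting a convergent subsequence via separability, and then upgrading the limit functional to an element of $L^2\bigl(0,T;L^2(\Omega\times Y)\bigr)$ via Riesz. However, the step you summarize as ``by the uniform bound and density'' conceals the one genuine technical subtlety of the classical two-scale compactness argument, and as written it does not go through.

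The issue is that the linear functionals $\mu_{\e}$ are \emph{not} uniformly bounded on $X=L^2\bigl(0,T;L^2(\Omega\times Y)\bigr)$ in its own norm: the estimate $|\mu_{\e}(\Psi)|\le M\,\|\varphi(\cdot,\cdot/\e)\|_{L^2(\Omega)}\|\theta\|_{L^2(0,T)}$ controls each $\mu_{\e}(\Psi)$ for a fixed test function, but $\|\varphi(\cdot,\cdot/\e)\|_{L^2(\Omega)}$ is not dominated by $\|\varphi\|_{L^2(\Omega\times Y)}$ uniformly in $\e$ (only in the $\e\to 0$ limit). Consequently, a diagonal extraction over a set that is countable and dense in the $X$-norm does not immediately yield convergence on all of $\mathcal T$, because the standard equicontinuity argument requires the uniform bound to hold in the very norm in which one approximates. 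This is exactly why Allaire --- and the paper --- first regard the $\Lambda_{\e}$ as elements of the dual of the \emph{separable Banach space} $\mathcal C\bigl([0,T];\mathcal C(\overline\Omega;\mathcal C_{\sharp}(Y))\bigr)$ equipped with the sup-norm, where the uniform bound $|\Lambda_{\e}(\psi)|\le C\sqrt{T|\Omega|}\,\|\psi\|_{\mathcal C}$ is genuine (cf.\ (4.1)); weak-$*$ compactness in this dual furnishes $\Lambda_0$, and only \emph{then} does one invoke Allaire's Lemma~5.2 slicewise together with Lebesgue's theorem (cf.\ (4.2)) to show that the \emph{limit} functional $\Lambda_0$ is bounded in the $L^2(\Omega\times Y)$-norm, so that Riesz representation applies in $X$. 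You can rescue your diagonal-extraction version by replacing the equicontinuity argument with an explicit $\limsup$-Cauchy argument (fix $\Psi$, pick $\Psi_n$ close in $X$-norm, use $\limsup_{\e}|\mu_{\e}(\Psi-\Psi_n)|\le M\|\Psi-\Psi_n\|_X$ to show $(\mu_{\e}(\Psi))_{\e}$ is Cauchy), but this is nonstandard and must be spelled out; the shorthand ``uniform bound and density'' is not correct as it stands. The same remark applies to your $H^{-1}$ sketch: the uniform bound one actually has is in terms of $\|\psi^{\e}\|_{H^1(0,T;L^2(\Omega))}$, which is controlled by the $\mathcal C^1\bigl([0,T];\mathcal C(\overline\Omega;\mathcal C_{\sharp}(Y))\bigr)$ norm, and the paper works in the closed subspace of such test functions vanishing at $t=0,T$ precisely for this reason.
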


\begin{proof}
The proof is similar to the proof of Theorem 1.2 in \cite{allaire}.  Let us
assume first that  $(w^{\e})_{\e >0}$ is a bounded sequence of
$L^2 \bigl( (0,T)\times \Omega \bigr)$. In our time-dependent framework we
consider test-functions $\psi \in {\mathcal C} \bigl( [0,T]; {\mathcal C}
(\overline{\Omega}; {\mathcal C}_{\sharp} (Y) ) \bigr)$.
Furthermore,
For any $\psi \in {\mathcal C} \bigl( [0,T]; {\mathcal C} (\overline{\Omega};
{\mathcal C}_{\sharp} (Y) ) \bigr)$ and for any fixed $\e >0$, the mapping $
\displaystyle (t,y) \mapsto \psi^{\e} (t,y)= \psi \left( t, y, \frac{y}{\e}
\right)$ is mesurable on $(0,T) \times \Omega$ and satisfies
\begin{eqnarray*}
\| \psi^{\e} \|_{L^2 ( (0,T) \times \Omega)}  =  \left( \int_0^T \int_{\Omega}
\left(  \psi \left( t, y, \frac{y}{\e} \right) \right)^2 \, dy dt \right)^{1/2}
\le \sqrt{ T | \Omega|} \| \psi\|_{{\mathcal C} ( [0,T]; {\mathcal C}
(\overline{\Omega}; {\mathcal C}_{\sharp} (Y) ) )}.
\end{eqnarray*}
Hence we can define $\Lambda_{\e} \in \Bigl(  {\mathcal C} \bigl( [0,T];
{\mathcal C} (\overline{\Omega}; {\mathcal C}_{\sharp} (Y) ) \bigr) \Bigr)'$ by
\begin{eqnarray*}
\Lambda_{\e} (\psi) = \int_0^T \int_{\Omega} w^{\e} (t,y) \psi \left( t, y,
\frac{y}{\e} \right) \, dy dt
   \quad \forall \psi \in  {\mathcal C} \bigl( [0,T]; {\mathcal C}
(\overline{\Omega}; {\mathcal C}_{\sharp} (Y) ) \bigr).
\end{eqnarray*}
Since $(w^{\e})_{\e >0}$ is a bounded sequence of $L^2 \bigl( (0,T) \times
\Omega \bigr)$, we infer with Cauchy-Schwarz's inequality that there exists a
real number $C>0$, independent of $\e$, such that
\begin{eqnarray} \label{sec4.1}
\bigl| \Lambda_{\e} (\psi) \bigr| &\le& \|w^{\e} \|_{L^2 ( (0,T) \times \Omega)}
\| \psi^{\e} \|_{L^2 ( (0,T) \times \Omega)}  \le C  \| \psi^{\e} \|_{L^2 (
(0,T) \times \Omega)}   \nonumber\\
&\le& C  \sqrt{ T | \Omega|} \| \psi\|_{{\mathcal C} (
[0,T]; {\mathcal C} (\overline{\Omega}; {\mathcal C}_{\sharp} (Y) ) )}
\end{eqnarray}
for all $ \psi \in  {\mathcal C} \bigl( [0,T]; {\mathcal C} (\overline{\Omega};
{\mathcal C}_{\sharp} (Y) ) \bigr)$ and  the sequence $(\Lambda_{\e})_{{\e} >0}$
is bounded in $\Bigl(  {\mathcal C} \bigl( [0,T]; {\mathcal C}
(\overline{\Omega}; {\mathcal C}_{\sharp} (Y) ) \bigr) \Bigr)'$. Reminding that
$ {\mathcal C} \bigl( [0,T]; {\mathcal C} (\overline{\Omega}; {\mathcal
C}_{\sharp} (Y) ) \bigr) $ is a separable Banach space, we infer that there
exists $\Lambda_0 \in \Bigl(  {\mathcal C} \bigl( [0,T]; {\mathcal C}
(\overline{\Omega}; {\mathcal C}_{\sharp} (Y) ) \bigr) \Bigr)'$ such that,
possibly extracting a subsequence still denoted $(\Lambda_{\e})_{{\e} >0}$,
\begin{eqnarray*}
(\Lambda_{\e}) \rightharpoonup \Lambda_{0} \quad \hbox{\rm weak * in }
\Bigl( {\mathcal C} \bigl( [0,T]; {\mathcal C} (\overline{\Omega}; {\mathcal
C}_{\sharp} (Y) ) \bigr) \Bigr)'
\end{eqnarray*}
i.e.
\begin{eqnarray*}
\lim_{{\e} \to 0} \int_0^T \int_{\Omega} w^{\e} (t,y) \psi \left( t, y,
\frac{y}{\e} \right) \, dy dt = \Lambda_0 (\psi)
   \quad \forall \psi \in  {\mathcal C} \bigl( [0,T]; {\mathcal C}
(\overline{\Omega}; {\mathcal C}_{\sharp} (Y) ) \bigr).
\end{eqnarray*}
Observing that, for all $t \in [0,T]$, $\psi^2 ( t, \cdot, \cdot) \in L^1
\bigl(\Omega; {\mathcal C}_{\sharp} (Y) ) \bigr)$, we can also use Lemma 5.2 of
\cite{allaire}, which yields
\begin{eqnarray*}
\lim_{{\e} \to 0} \int_{\Omega} \left(  \psi \left( t, y, \frac{y}{\e} \right)
\right)^2 \, dy = \int_{\Omega \times Y} \bigl( \psi( t, y, \eta) \bigr)^2 \, d
\eta  dy  \quad \forall t \in [0,T].
\end{eqnarray*}
Then, using Lebesgue's convergence theorem, we obtain
\begin{eqnarray} \label{sec4.2}
\lim_{{\e} \to 0} \int_0^T \int_{\Omega} \left(  \psi \left( t, y, \frac{y}{\e}
\right) \right)^2 \, dy dt = \int_0^T \int_{\Omega \times Y} \bigl( \psi( t, y,
\eta) \bigr)^2 \, d \eta  dy dt .
\end{eqnarray}
With (\ref{sec4.1}) and (\ref{sec4.2}) we get
\begin{eqnarray*}
\bigl| \Lambda_0 ( \psi) \bigr| \le C \| \psi\|_{L^2 (0,T; L^2 (\Omega \times
Y))} \quad \forall \psi \in  {\mathcal C} \bigl( [0,T]; {\mathcal C}
(\overline{\Omega}; {\mathcal C}_{\sharp} (Y) ) \bigr).
\end{eqnarray*}
It follows that $\Lambda_0$ can be extended to $ \Bigl( L^2 \bigl(0,T; L^2
(\Omega \times Y) \bigr) \Bigr)'$ and with Riesz's representation theorem we
infer that there exists $w^0 \in L^2 \bigl(0,T; L^2 (\Omega \times Y) \bigr)$
such that
\begin{eqnarray*}
\Lambda_0 ( \psi) = \int_0^T \int_{\Omega \times Y} w^0 (t, y, \eta) \psi (t, y,
\eta) \, d\eta dy dt \quad \forall \psi \in L^2 \bigl(0,T; L^2 (\Omega \times Y)
\bigr)
\end{eqnarray*}
which allows us to conclude for the first part of the theorem.

Let us assume now that $(w^{\e})_{\e >0}$ is a bounded sequence of $H^{-1}
\bigl( 0,T; L^2( \Omega) \bigr)$ and let
\begin{eqnarray*}
{\mathcal C}^1_0 \bigl( [0,T]; {\mathcal C} (\overline{\Omega}; {\mathcal
C}_{\sharp} (Y) ) \bigr) = \bigl\{ \psi \in {\mathcal C}^1 \bigl( [0,T];
{\mathcal C} (\overline{\Omega}; {\mathcal C}_{\sharp} (Y) ) \bigr); \psi(0, y,
\eta) = \psi(T, y, \eta) =0 \ \forall (y, \eta) \in \overline{\Omega} \times Y
\bigr\}.
\end{eqnarray*}
It is a closed subspace of ${\mathcal C}^1 \bigl( [0,T]; {\mathcal C}
(\overline{\Omega}; {\mathcal C}_{\sharp} (Y) ) \bigr) $ for the usual norm of
${\mathcal C}^1 \bigl( [0,T]; {\mathcal C} (\overline{\Omega}; {\mathcal
C}_{\sharp} (Y) ) \bigr) $
 and for any  $\psi \in {\mathcal C}^1_0 \bigl( [0,T]; {\mathcal C}
(\overline{\Omega}; {\mathcal C}_{\sharp} (Y) ) \bigr)$, we have
\begin{eqnarray*}
&& \displaystyle \| \psi^{\e} \|_{H^1 ( 0,T ; L^2( \Omega) )}  =  \left(
\int_0^T \int_{\Omega} \left(  \psi \left( t, y, \frac{y}{\e} \right) \right)^2
\, dy dt
 + \int_0^T \int_{\Omega} \left(  \frac{\partial \psi}{\partial t}  \left( t, y,
\frac{y}{\e} \right) \right)^2 \, dy dt  \right)^{1/2} \\
&& \displaystyle \le \sqrt{ T | \Omega|} \| \psi\|_{{\mathcal C}^1 ( [0,T];
{\mathcal C} (\overline{\Omega}; {\mathcal C}_{\sharp} (Y) ) )}.
\end{eqnarray*}
Furthermore, we may now define $\Lambda_{\e} \in \Bigl(  {\mathcal C}^1_0 \bigl(
[0,T]; {\mathcal C} (\overline{\Omega}; {\mathcal C}_{\sharp} (Y) ) \bigr)
\Bigr)'$ by
\begin{eqnarray*}
\Lambda_{\e} (\psi) = \int_0^T \int_{\Omega} w^{\e} (t,y) \psi \left( t, y,
\frac{y}{\e} \right) \, dy dt
   \quad \forall \psi \in  {\mathcal C}^1_0 \bigl( [0,T]; {\mathcal C}
(\overline{\Omega}; {\mathcal C}_{\sharp} (Y) ) \bigr).
\end{eqnarray*}
Since $(w^{\e})_{\e >0}$ is a bounded sequence of $H^{-1} \bigl( 0,T;
L^2(\Omega) \bigr)$, we infer  that there exists a real number $C'>0$,
independent of $\e$, such that
\begin{eqnarray*}
\bigl| \Lambda_{\e} (\psi) \bigr| \le \|w^{\e} \|_{H^{-1} ( 0,T ; L^2( \Omega))}
 \| \psi^{\e} \|_{H^1 ( 0,T; L^2( \Omega))}  \le C ' \| \psi^{\e} \|_{H^1 ( 0,T;
L^2( \Omega))}   \le C ' \sqrt{ T | \Omega|} \| \psi\|_{{\mathcal C}^1 ( [0,T];
{\mathcal C} (\overline{\Omega}; {\mathcal C}_{\sharp} (Y) ) )}
\end{eqnarray*}
for all $ \psi \in  {\mathcal C}^1_0 \bigl( [0,T]; {\mathcal C}
(\overline{\Omega}; {\mathcal C}_{\sharp} (Y) ) \bigr)$ and  the sequence
$(\Lambda_{\e})_{{\e} >0}$ is bounded in $\Bigl(  {\mathcal C}^1_0 \bigl( [0,T];
{\mathcal C} (\overline{\Omega}; {\mathcal C}_{\sharp} (Y) ) \bigr) \Bigr)'$.
Since   $ {\mathcal C}^1 \bigl( [0,T]; {\mathcal C} (\overline{\Omega};
{\mathcal C}_{\sharp} (Y) ) \bigr) $ is a separable Banach space, we can
conclude in the same way as previously.
\end{proof}

\begin{remark} We may observe that this proof shows that we can choose
test-functions in
 $ {\mathcal C} \bigl( [0,T]; {\mathcal C} (\overline{\Omega}; {\mathcal
C}_{\sharp} (Y) ) \bigr)$ (resp. in  $ {\mathcal C}^1_0 \bigl( [0,T]; {\mathcal
C} (\overline{\Omega}; {\mathcal C}_{\sharp} (Y) ) \bigr)$) instead of
${\mathcal D} (0,T) \times {\mathcal D} \bigl( \Omega ; {\mathcal
C}^{\infty}_{\sharp} (Y) \bigr)$.
\end{remark}

Then the convergence results for the velocity, the micro-rotation and the
pressure are given in the following three propositions.
\begin{proposition} \label{prop4.1}
{\bf (Two-scale limit of the velocity)}
Under the assumptions of Proposition \ref{pro1}, there exist $v^0 \in \Bigl( L^2
\bigl( 0,T; L^2 (\Omega; H^1_{\sharp}(Y) ) \bigr) \Bigr)^2$ such that
$\displaystyle \frac{\partial v^0}{\partial y_2} \in  \Bigl( L^2 \bigl(0,T; L^2
(\Omega \times Y) \bigr) \Bigr)^2$ and $v^1 \in \Bigl( L^2 \bigl( 0,T; L^2
\bigl(\Omega \times (0,1) ; H^1_{\sharp}(0,1)_{/ \br} ) \bigr) \Bigr)^2$ such
that, possibly extracting a subsequence still denoted $(v^{\e})_{\e>0}$, we have
for $i=1,2$:
\begin{eqnarray}\label{eq:sec4-1}
v_i^{\varepsilon} \toH v^0_i, \quad \frac{\partial v_i^{\varepsilon}}{\partial
y_2} \toH \frac{\partial v^0_i}{\partial y_2} + \frac{\partial v^1_i}{\partial
\eta_2},
\end{eqnarray}
and
\begin{eqnarray} \label{eq:sec4-2}
\varepsilon \frac{\partial v^{\e}_i}{\partial y_1} \toH  \frac{\partial
v^0_i}{\partial \eta_1}.
\end{eqnarray}
Furthermore $v^0$ does not depend on $\eta_2$, $v^0$ is divergence free in the
following sense
\begin{eqnarray} \label{eq:sec4-3}
h(y_1, \eta_1) \frac{\partial v^0_1}{\partial \eta_1} - y_2 \frac{\partial
h}{\partial \eta_1} (y_1, \eta_1) \frac{\partial v^0_1}{\partial y_2} +
\frac{\partial v^0_2}{\partial y_2} = 0 \quad \hbox{\rm in } (0,T) \times \Omega
\times (0,1),
\end{eqnarray}
and
\begin{eqnarray} \label{eq:sec4-4}
&& \displaystyle v^0 = 0 \quad \hbox{\rm on } (0,T) \times \Gamma_0 \times (0,1)
, \Gamma_0 = (0,L) \times \{ 0 \},  \\
&& \displaystyle - v_1^0  \frac{\partial h}{\partial \eta_1} ( y_1, \eta_1) +
v_2^0  =0 \quad \hbox{\rm on } (0,T) \times \Gamma_1 \times (0,1), \Gamma_1 =
(0,L) \times \{1\}.
\end{eqnarray}
\end{proposition}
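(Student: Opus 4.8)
The plan is to apply the two-scale convergence theorem proved above to the three sequences $(v_i^{\e})_{\e>0}$, $\bigl(\partial v_i^{\e}/\partial y_2\bigr)_{\e>0}$ and $\bigl(\e\,\partial v_i^{\e}/\partial y_1\bigr)_{\e>0}$, $i=1,2$, which are all bounded in $L^2((0,T)\times\Om)$ by the estimates (\ref{E3.13})--(\ref{E3.16}). Extracting a common subsequence, still denoted $(v^{\e})_{\e>0}$, we obtain two-scale limits $v_i^0$, $\chi_i$, $\zeta_i$ in $L^2(0,T;L^2(\Om\times Y))$, and everything that follows is a matter of identifying their structure. Throughout I use that a smooth $Y$-periodic coefficient can be absorbed into an admissible test function (so that $a(\cdot,\cdot/\e)\,w^{\e}\toH a\,w^0$ whenever $w^{\e}\toH w^0$) and that testing an $L^2$-bounded sequence against $\e$ times a test function gives a vanishing limit.

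The identification relies on the standard integration-by-parts device. Testing $\e\,\partial v_i^{\e}/\partial y_1$ against $\varphi(t,y,y/\e)\theta(t)$, with $\varphi\in{\mathcal D}(\Om;{\mathcal C}^{\infty}_{\sharp}(Y))$ and $\theta\in{\mathcal D}(0,T)$, and integrating by parts in $y_1$ (the boundary terms cancelling by $L$-periodicity), the contribution carrying an explicit factor $\e$ vanishes because $v_i^{\e}$ is bounded in $L^2$, and in the limit one gets $\int_0^T\!\int_\Om\!\int_Y\zeta_i\,\varphi\,\theta = -\int_0^T\!\int_\Om\!\int_Y v_i^0\,\partial_{\eta_1}\varphi\,\theta$; hence $v_i^0$ has a weak $\eta_1$-derivative in $L^2$ equal to $\zeta_i$, which is (\ref{eq:sec4-2}) and, with the $Y$-periodicity of the two-scale limit, gives $v^0\in\bigl(L^2(0,T;L^2(\Om;H^1_{\sharp}(Y)))\bigr)^2$. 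Repeating the manipulation with $\e\,\partial v_i^{\e}/\partial y_2$, where now the $L^2$-bound on $\partial v_i^{\e}/\partial y_2$ makes the whole right-hand side vanish, yields $\int_0^T\!\int_\Om\!\int_Y v_i^0\,\partial_{\eta_2}\varphi\,\theta = 0$, so $v^0$ does not depend on $\eta_2$. Finally, testing $\partial v_i^{\e}/\partial y_2$ against test functions depending only on $\eta_1$ and integrating by parts in $y_2$ identifies $\int_0^1\chi_i\,d\eta_2$ with the weak derivative $\partial v_i^0/\partial y_2$, so $\partial v^0/\partial y_2\in\bigl(L^2(0,T;L^2(\Om\times Y))\bigr)^2$; the $\eta_2$-mean-zero remainder $\chi_i-\partial v_i^0/\partial y_2$ is then the $\eta_2$-derivative of a unique $v_i^1\in L^2(0,T;L^2(\Om\times(0,1);H^1_{\sharp}(0,1)_{/\br}))$, which is (\ref{eq:sec4-1}).

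To obtain (\ref{eq:sec4-3}) I rewrite the incompressibility condition (\ref{div}) in $\Om$ by means of the chain rule (\ref{not}): it becomes $h^{\e}\,(\e\,b_{\e}\!\cdot\!\nabla v_1^{\e}) + \partial v_2^{\e}/\partial y_2 = 0$, where $\e\,b_{\e}\!\cdot\!\nabla v_1^{\e} = \e\,\partial v_1^{\e}/\partial y_1 - \frac{y_2}{h^{\e}}\bigl(\e\,\frac{\partial h}{\partial y_1} + \frac{\partial h}{\partial\eta_1}\bigr)(y_1,y_1/\e)\,\partial v_1^{\e}/\partial y_2$. Testing against $\varphi(t,y,y/\e)\theta(t)$ and passing to the two-scale limit — the terms with an explicit factor $\e$ disappear by the estimates of Proposition \ref{pro1} and the bound (\ref{not2}), while $h$ and $\partial h/\partial\eta_1$ are absorbed into the test function, and $\partial v_1^{\e}/\partial y_2$, $\partial v_2^{\e}/\partial y_2$ two-scale converge according to (\ref{eq:sec4-1}) — one gets $h\,\frac{\partial v_1^0}{\partial\eta_1} - y_2\,\frac{\partial h}{\partial\eta_1}\bigl(\frac{\partial v_1^0}{\partial y_2}+\frac{\partial v_1^1}{\partial\eta_2}\bigr) + \frac{\partial v_2^0}{\partial y_2} + \frac{\partial v_2^1}{\partial\eta_2} = 0$. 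Integrating over $\eta_2\in(0,1)$ annihilates the two corrector terms by $\eta_2$-periodicity and leaves exactly (\ref{eq:sec4-3}), every other term being $\eta_2$-independent.

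For the boundary conditions (\ref{eq:sec4-4}): on the flat part $\Gamma_0=\{y_2=0\}$, where $v_i^{\e}=0$, I would fix $\phi\in{\mathcal C}^{\infty}([0,1])$ with $\phi(0)=1$, $\phi(1)=0$ and a test function $\psi(t,y_1,\eta_1)$, observe that $\int_0^T\!\int_\Om v_i^{\e}\,\psi(t,y_1,y_1/\e)\,\phi'(y_2)\,dy\,dt = -\int_0^T\!\int_\Om \partial_{y_2}v_i^{\e}\,\psi(t,y_1,y_1/\e)\,\phi(y_2)\,dy\,dt$, pass to the two-scale limit (the corrector vanishing after the $\eta_2$-average) and integrate by parts in $y_2$ on both sides; this leaves $\int_0^T\!\int_0^L\!\int_0^1 v_i^0(t,y_1,0,\eta_1)\,\psi\,d\eta_1\,dy_1\,dt = 0$, hence $v^0 = 0$ on $(0,T)\times\Gamma_0\times(0,1)$. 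On the oscillating part $\{y_2=1\}$ I use $\e(h^{\e})'(y_1)=\bigl(\e\,\frac{\partial h}{\partial y_1}+\frac{\partial h}{\partial\eta_1}\bigr)(y_1,y_1/\e)$, express the traces at $y_2=1$ of $v_1^{\e}$ and $v_2^{\e}$ by the same device (now with $\phi(0)=0$, $\phi(1)=1$), bound these traces uniformly by the trace theorem together with Proposition \ref{pro1} so as to discard the $O(\e)$ contribution, and pass to the two-scale limit in $-\e(h^{\e})'v_1^{\e}+v_2^{\e}=0$ on $\Gamma_1^{\e}$; this gives $-\frac{\partial h}{\partial\eta_1}(y_1,\eta_1)\,v_1^0 + v_2^0 = 0$ on $(0,T)\times\Gamma_1\times(0,1)$. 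The hard part is this last step: since $v^{\e}$ is bounded in $H^1$ only in the $y_2$-direction — its $y_1$-derivative blowing up like $\e^{-1}$ — the boundary conditions cannot be read off from a weak $H^1$-limit and must be recovered through these trace identities with carefully chosen oscillating test functions; the other point requiring some care is the extraction of the corrector $v^1$ from the $\eta_2$-mean-zero part of $\chi$.
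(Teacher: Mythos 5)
Your argument is correct and follows essentially the same route as the paper: two-scale compactness for $(v_i^\e)$, $(\partial_{y_2}v_i^\e)$ and $(\e\,\partial_{y_1}v_i^\e)$, identification of the structure of the limits by the standard integration-by-parts device, passage to the limit in the incompressibility condition, and then extraction of the boundary behaviour. Two places where your execution differs slightly from the paper's, both harmless, are worth noting. For (\ref{eq:sec4-3}) you test the rescaled equation $h^\e(\e\,b_\e\!\cdot\!\nabla v_1^\e)+\partial_{y_2}v_2^\e=0$ directly, so the correctors $\partial_{\eta_2}v_i^1$ appear in the two-scale limit and must be removed by averaging in $\eta_2$; the paper instead goes back to $\Om^\e$, tests $v^\e$ against $\nabla_z\varphi^\e$ with $\varphi^\e(z)=\varphi\bigl(z_1,\tfrac{z_2}{\e h^\e(z_1)},\tfrac{z_1}{\e}\bigr)$, and the correctors never appear. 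For the boundary conditions the paper obtains $v_2^0|_{\Gamma_0}=0$ and the non-penetration relation on $\Gamma_1$ as boundary terms of the divergence identity tested with $\varphi$ not vanishing on $\partial\Om$, and then gets $v_1^0|_{\Gamma_0}=0$ separately with the test function $\varphi(y_1,y_1/\e)(1-y_2)$; you instead use a uniform trace-identity device ($\phi(0)=1$, $\phi(1)=0$, and then $\phi(0)=0$, $\phi(1)=1$, with the $\eta_2$-corrector killed by averaging), which treats both components on $\Gamma_0$ at once and lets you pass to the limit directly in $-\e(h^\e)'v_1^\e+v_2^\e=0$ on $\Gamma_1$. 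Your remark on the trace bound is the right one: what you actually need is not the full 2D trace theorem (the $y_1$-derivative is uncontrolled) but the 1D estimate $\|v_i^\e(\cdot,1)\|_{L^2(0,L)}\lesssim\|v_i^\e\|_{L^2(\Om)}+\|\partial_{y_2}v_i^\e\|_{L^2(\Om)}$, which Proposition \ref{pro1} does give; with that, your argument closes cleanly and is arguably a bit more systematic than the paper's, at the modest cost of manipulating the corrector terms explicitly.
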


\begin{proof}
The first part of the result is a direct consequence of the previous theorem and
is obtained by using the same techniques as in Proposition 1.14 in
\cite{allaire}.

Indeed, from Proposition \ref{pro1} we know that $(v^{\e}_i)_{{\e} >0}$, $
\displaystyle \left( \frac{\partial v^{\e}_i}{\partial y_2} \right)_{{\e} >0}$
and $ \displaystyle \left( \e \frac{\partial v^{\e}_i}{\partial y_1}
\right)_{{\e} >0}$ are bounded in $L^2 \bigl( (0,T) \times \Omega \bigr)$. It
follows that, possibly extracting a subsequence,   they two-scale converge to
$v^0_i$, $\xi^0_i$ and $\xi^1_i$ respectively, i.e.
\begin{eqnarray} \label{sec4.3}
\lim_{\e \to 0} \int_0^T \int_{\Omega} v^{\e}_i (t,y) \varphi \left( y,
\frac{y}{\e} \right) \theta (t) \, dy dt = \int_0^T \int_{\Omega \times Y} v^0_i
( t, y, \eta) \varphi (y, \eta) \theta (t) \, d\eta dy dt
\end{eqnarray}
\begin{eqnarray}  \label{sec4.4}
\begin{array}{l}
 \displaystyle \lim_{\e \to 0} \int_0^T \int_{\Omega} \frac{\partial
v^{\e}_i}{\partial y_2} (t,y) \varphi \left( y, \frac{y}{\e} \right) \theta (t)
\, dy dt \\
\displaystyle =  - \lim_{\e \to 0} \int_0^T \int_{\Omega} v^{\e}_i (t,y) \left(
\frac{\partial \varphi}{\partial y_2}  \left( y, \frac{y}{\e} \right)  +
\frac{1}{\e}  \frac{\partial \varphi}{\partial \eta_2}  \left( y, \frac{y}{\e}
\right)  \right) \theta (t) \, dy dt  \\
 \displaystyle = \int_0^T \int_{\Omega \times Y} \xi^0_i ( t, y, \eta) \varphi
(y, \eta) \theta (t) \, d\eta dy dt
\end{array}
\end{eqnarray}
and
\begin{eqnarray}  \label{sec4.5}
\begin{array}{l}
 \displaystyle \lim_{\e \to 0} \int_0^T \int_{\Omega} \e \frac{\partial
v^{\e}_i}{\partial y_1}  (t,y) \varphi \left( y, \frac{y}{\e} \right) \theta (t)
\, dy dt \\
\displaystyle =  - \lim_{\e \to 0} \int_0^T \int_{\Omega} v^{\e}_i (t,y) \left(
\e  \frac{\partial \varphi}{\partial y_1}  \left( y, \frac{y}{\e} \right)  +
\frac{\partial \varphi}{\partial \eta_1}  \left( y, \frac{y}{\e} \right)
\right) \theta (t) \, dy dt  \\
 \displaystyle = \int_0^T \int_{\Omega \times Y} \xi^1_i ( t, y, \eta) \varphi
(y, \eta) \theta (t) \, d\eta dy dt
 \end{array}
\end{eqnarray}
for all $\theta \in {\mathcal D}(0,T)$, $\varphi  \in {\mathcal D} \bigl(\Omega;
{\mathcal C}^{\infty}_{\sharp} (Y) \bigr)$. From (\ref{sec4.5}) and
(\ref{sec4.3}) we obtain
\begin{eqnarray} \label{sec4.5bis}
\begin{array}{l}
 \displaystyle - \lim_{\e \to 0} \int_0^T \int_{\Omega} v^{\e}_i (t,y) \left( \e
 \frac{\partial \varphi}{\partial y_1}  \left( y, \frac{y}{\e} \right)  +
\frac{\partial \varphi}{\partial \eta_1}  \left( y, \frac{y}{\e} \right)
\right) \theta (t) \, dy dt  \\
\displaystyle = - \int_0^T \int_{\Omega \times Y} v^0_i ( t, y, \eta)
\frac{\partial \varphi}{\partial \eta_1} (y, \eta) \theta (t) \, d\eta dy dt
\\
 \displaystyle
= \int_0^T \int_{\Omega \times Y} \xi^1_i ( t, y, \eta) \varphi (y, \eta) \theta
(t) \, d\eta dy dt
\end{array}
\end{eqnarray}
for all $\theta \in {\mathcal D}(0,T)$, $\varphi  \in {\mathcal D} \bigl(\Omega;
{\mathcal C}^{\infty}_{\sharp} (Y) \bigr)$, which implies that $\displaystyle
\xi^1_i = \frac{\partial v^0_i}{\partial \eta_1} \in L^2 \bigl( 0,T; L^2 (\Omega
\times Y) \bigr)$. Thus (\ref{eq:sec4-2}) holds.

Similarly, by multiplying (\ref{sec4.4}) by $\e$ and taking into account
(\ref{sec4.3}) we get
\begin{eqnarray*}
\lim_{\e \to 0} \int_0^T \int_{\Omega} v^{\e}_i (t,y)  \frac{\partial
\varphi}{\partial \eta_2}  \left( y, \frac{y}{\e} \right)   \theta (t) \, dy dt
= 0 = \int_0^T \int_{\Omega \times Y} v^0_i ( t, y, \eta) \frac{\partial
\varphi}{\partial \eta_2}  (y, \eta) \theta (t) \, d\eta dy dt
\end{eqnarray*}
and thus $v_i^0$ does not depend on $\eta_2$. Moreover, by choosing $\varphi$
independent of $\eta$ (i.e $\varphi \in {\mathcal D} (\Omega)$) in
(\ref{sec4.5bis}), we get
\begin{eqnarray*}
\int_0^T \int_{\Omega \times (0,1)} \frac{\partial v^0_i}{\partial \eta_1}  ( t,
y, \eta_1) \varphi (y) \theta (t) \, d\eta_1 dy dt =0 =  \int_0^T \int_{\Omega}
\bigl( v^0_i (t, y, 1) - v^0_i (t,y, 0) \bigr)  \varphi (y) \theta (t) \, dy dt
\end{eqnarray*}
and $v^0_i \in  L^2 \bigl( 0,T; L^2 (\Omega ; H^1_{\sharp} (Y) ) \bigr)$.

Next, by choosing $\varphi  \in {\mathcal D} \bigl(\Omega \times (0,1) \bigr)$
(i.e. $\varphi$ does not depend on $\eta_2$), we obtain now
\begin{eqnarray*}
&& \displaystyle  \lim_{\e \to 0} \int_0^T \int_{\Omega} v^{\e}_i (t,y)
\frac{\partial \varphi}{\partial y_2}  \left( y, \frac{y_1}{\e} \right)   \theta
(t) \, dy dt = \int_0^T \int_{\Omega \times Y} v^0_i ( t, y, \eta_1)
\frac{\partial \varphi}{\partial y_2}  (y, \eta_1) \theta (t) \, d\eta dy dt \\
&& \displaystyle = - \int_0^T \int_{\Omega \times Y} \xi^0_i ( t, y, \eta)
\varphi (y, \eta_1) \theta (t) \, d\eta dy dt.
\end{eqnarray*}
Hence
\begin{eqnarray*}
\int_0^T \int_{\Omega \times Y}  \left( - \frac{\partial v^0_i}{\partial y_2}
(t,y, \eta_1) + \xi^0_i ( t, y, \eta) \right) \varphi (y_1, y_2, \eta_1) \theta
(t) \, d\eta dy dt =0.
\end{eqnarray*}
It follows that there exists $v^1_i \in L^2 \bigl( 0,T; L^2 \bigl(\Omega \times
(0,1); H^1_{\sharp} (0,1)_{| {\br}} \bigr) \bigr)$ such that
\begin{eqnarray*}
 \frac{\partial v^{\e}_i}{\partial y_2} \toH \xi^0_i =  \frac{\partial
v^0_i}{\partial y_2} +  \frac{\partial v^1_i}{\partial \eta_2},
 \end{eqnarray*}
and the second part of (\ref{eq:sec4-1}) holds.


Now, let $\displaystyle \varphi^{\e} (z)= \varphi \left( z_1, \frac{z_2}{\e
h^{\e} ( z_1)}, \frac{z_1}{\e} \right)$ for all $(z_1, z_2 ) \in \Omega^{\e}$.
Recalling that ${\rm div}_z v^{\e} =0$ in $\Omega^{\e}$, we get
\begin{eqnarray*}
&& \displaystyle 0 = \int_0^T \int_{\Omega^{\e}} \left( \frac{\partial
v_1^{\e}}{\partial z_1} (t,z) + \frac{\partial v_2^{\e}}{\partial z_2} (t,z)
\right)
\varphi^{\e} (z) \theta (t) \, dz dt  \\
&& \displaystyle = - \int_0^T \int_{\Omega^{\e}} \left( v_1^{\e} (t,z)
\frac{\partial \varphi^{\e}}{\partial z_1}
(z) + v_2^{\e} (t,z) \frac{\partial \varphi^{\e}}{\partial z_2}
(z) \right) \theta (t) \, dz dt \\
&& = \displaystyle - \int_0^T \int_{\Omega} \left(
v_1^{\e} (t, y) \bigl(b_{\e} \cdot \nabla \varphi^{\e} \bigr) (y) + v_2^{\e} (t,
y) \frac{1}{\e h^{\e} (y_1)}
\frac{\partial \varphi^{\e}}{\partial y_2} (y) \right)\e h^{\e}(y_{1})
\theta(t) \, dy dt \\
&& = \displaystyle - \int_0^T \int_{\Omega}
v_1^{\e} (t,y)  \left(
\e h \left( y_1, \frac{y_1}{\e} \right)  \frac{\partial \varphi}{\partial y_1}
\left( y, \frac{y}{\e} \right)
+ h \left( y_1, \frac{y_1}{\e} \right)  \frac{\partial \varphi}{\partial \eta_1}
\left( y, \frac{y}{\e} \right)  \right.\\
&& \displaystyle  \left. - y_2 \left(
\e \frac{\partial h}{\partial y_1} \left( y_1, \frac{y_1}{\e} \right) +
\frac{\partial h}{\partial \eta_1} \left( y_1, \frac{y_1}{\e} \right)
\right)
\frac{\partial \varphi}{\partial y_2} \left( y, \frac{y}{\e} \right)
\right) \theta (t) \, dy dt \\
&& \displaystyle  - \int_0^T \int_{\Omega} v_2^{\e} (t,y)
 \frac{\partial \varphi}{\partial y_2} \left( y, \frac{y}{\e} \right)
  \theta (t) \, dy dt.
\end{eqnarray*}
We pass to the limit as $\e $ tends to zero:
\begin{eqnarray*}
&& \displaystyle 0 =  - \int_0^T \int_{\Omega \times (0,1) } v_1^{0} (t, y,
\eta_1)  \left(
h ( y_1, \eta_1)   \frac{\partial \varphi}{\partial \eta_1} (y, \eta_1)
- y_2
\frac{\partial h}{\partial \eta_1} (y_1, \eta_1)    \frac{\partial
\varphi}{\partial y_2} ( y, \eta_1) \right) \theta (t) \, d \eta_1 dy dt \\
&& \displaystyle  - \int_0^T \int_{\Omega \times (0,1) } v_2^{0} (t, y,\eta_1)
\frac{\partial \varphi}{\partial y_2} (y, \eta_1)  \theta (t) \, d \eta_1 dy dt
\\
&& \displaystyle = \int_0^T \int_{\Omega \times (0,1) } \left(
h(y_1, \eta_1) \frac{\partial v^0_1}{\partial \eta_1} (t, y, \eta_1)
 - y_2 \frac{\partial h}{\partial \eta_1} (y_1, \eta_1) \frac{\partial
v^0_1}{\partial y_2} (t, y, \eta_1)
+ \frac{\partial v^0_2}{\partial y_2} (t, y, \eta_1) \right) \varphi (y, \eta_1)
\theta (t) \, d\eta_1 dy dt
\end{eqnarray*}
which gives (\ref{eq:sec4-3}).
But, taking into account the boundary conditions on $v^{\e}$, we may reproduce
the same computation with $\varphi \in {\mathcal C}^{\infty} \bigl(\overline{
\Omega}; {\mathcal C}^{\infty}_{\sharp} (0,1) \bigr)$ such that $\varphi$ is
$L$-periodic in $y_1$,  so with (\ref{eq:sec4-3}) it remains
\begin{eqnarray*}
&& \displaystyle \int_0^T \int_{\Gamma_1 \times (0,1)} \left( - v_1^0 (t, y,
\eta_1) \frac{\partial h}{\partial \eta_1} (y_1, \eta_1) + v_2^0 (t, y, \eta_1)
\right) \varphi (y, \eta_1) \theta (t) \, d \eta_1 dy_1 dt \\
&& \displaystyle - \int_0^T \int_{\Gamma_0 \times (0,1)} v_2^0 (t, y, \eta_1)
\varphi (y, \eta_1) \theta(t) \, d \eta_1 dy_1 dt = 0.
\end{eqnarray*}
We choose more precisely $\varphi (y_1, y_2, \eta_1) = \hat \varphi (y_2) \tilde
\varphi (y_1, \eta_1)$ with $\hat \varphi \in {\mathcal C}^{\infty} \bigl(
[0,1])$ and $\tilde \varphi \in {\mathcal C}^{\infty}_{\sharp} \bigl( [0,L];
{\mathcal C}^{\infty}_{\sharp} (0,1) \bigr)$. With $\hat \varphi (1)=0$ and
$\hat \varphi (0) = 1$
we get first
\begin{eqnarray*}
v_2^0 = 0 \quad \hbox{\rm on } (0,T) \times \Gamma_0 \times (0,1).
\end{eqnarray*}
Next with $\hat \varphi(1)=1$ and $\hat \varphi (0)=0$
we get
\begin{eqnarray*}
- v_1^0  \frac{\partial h}{\partial \eta_1} + v_2^0  = 0 \quad \hbox{\rm on }
(0,T) \times \Gamma_1 \times (0,1).
\end{eqnarray*}

Finally, let $\varphi \in {\mathcal D} \bigl( 0,L; {\mathcal
C}^{\infty}_{\sharp} (0,1) \bigr)$ and $\displaystyle \varphi^{\e} (y_1, y_2) =
\varphi \left( y_1, \frac{y_1}{\e} \right) (1-y_2)$ for all $(y_1, y_2) \in
\Omega$. Taking into account the boundary conditions for $v_1^{\e}$ (see
(\ref{eqn:er2.8a})-(\ref{eqn:er2.11b})) we have
\begin{eqnarray*}
\int_0^T \int_{\Omega} \frac{\partial v_1^{\e}}{\partial y_2} (t,y) \varphi^{\e}
(y) \theta (t) \, dy dt = \int_0^T \int_{\Omega} v_1^{\e} (t,y) \varphi \left(
y_1, \frac{y_1}{\e} \right) \theta (t) \, dy dt.
\end{eqnarray*}
By passing to the limit as $\e$ tends to zero we obtain
\begin{eqnarray*}
&& \displaystyle \int_0^T \int_{\Omega \times (0,1)} \frac{\partial
v_1^{0}}{\partial y_2} (t,y, \eta_1) \varphi (y_1, \eta_1) (1-y_2) \theta (t) \,
d \eta_1 dy dt \\
&& \displaystyle = \int_0^T \int_{\Omega \times (0,1)} v_1^{0} (t,y, \eta_1)
\varphi ( y_1, \eta_1) \theta (t) \, d \eta_1 dy dt.
\end{eqnarray*}
It follows that
\begin{eqnarray*}
\int_0^T \int_{\Gamma_0 \times (0,1)} v_1^{0} (t,y, \eta_1) \varphi ( y_1,
\eta_1) \theta (t) \,  d \eta_1 dy dt =0
\end{eqnarray*}
which implies that
$ v_1^0 = 0$   on $(0,T)\times \Gamma_0 \times (0,1)$.
\end{proof}

Similarly we can define the two-scale limit of $Z^{\e}$.

\begin{proposition} \label{prop4.2}
{\bf (Two-scale limit of the micro-rotation field)} Under the assumptions of
Proposition \ref{pro1}, there exist $Z^0 \in L^2 \bigl( 0,T; L^2 (\Omega;
H^1_{\sharp}(Y) ) \bigr)$ such that $\displaystyle \frac{\partial Z^0}{\partial
y_2} \in L^2 \bigl( 0,T; L^2 (\Omega \times Y ) \bigr)$  and $Z^1 \in L^2 \bigl(
0,T; L^2 \bigl(\Omega \times (0,1) ; H^1_{\sharp}(0,1)_{/ \br} \bigr) \bigr)$
such that, possibly extracting a subsequence still denoted $(Z^{\e})_{\e >0}$,
we have
\begin{eqnarray}\label{eq:sec4-5}
Z^{\varepsilon} \toH Z^0,  \quad \frac{\partial Z^{\varepsilon}}{\partial y_2}
\toH \frac{\partial Z^0}{\partial y_2} + \frac{\partial Z^1}{\partial \eta_2},
\end{eqnarray}
and
\begin{eqnarray} \label{eq:sec4-6}
\varepsilon \frac{\partial Z^{\e}}{\partial y_1} \toH \frac{\partial
Z^0}{\partial \eta_1}.
\end{eqnarray}
Furthermore $Z^0$ does not depend on $\eta_2$, and $Z^0 \equiv 0$ on $(\Gamma_0
\cup \Gamma_1) \times (0,1) \times (0,T)$.
\end{proposition}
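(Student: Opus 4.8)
The plan is to follow closely the strategy of the proof of Proposition \ref{prop4.1}, which here is even lighter: $Z^{\e}$ is a scalar function, it is subject to homogeneous Dirichlet conditions on the whole of $\Gamma_0\cup\Gamma_1$, and there is no divergence-free constraint to handle. First I would invoke Proposition \ref{pro1}: the estimates (\ref{E3.13}), (\ref{E3.14}) and (\ref{E3.16}) say precisely that $(Z^{\e})_{\e>0}$, $(\partial Z^{\e}/\partial y_2)_{\e>0}$ and $(\e\,\partial Z^{\e}/\partial y_1)_{\e>0}$ are bounded in $L^2\bigl((0,T)\times\Om\bigr)$. By the two-scale compactness theorem established above, up to extraction of a common subsequence they two-scale converge to functions $Z^0$, $\zeta^0$, $\zeta^1$ in $L^2\bigl(0,T;L^2(\Om\times Y)\bigr)$.

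Next I would identify these limits exactly as in (\ref{sec4.4})--(\ref{sec4.5bis}). Integrating $\e\,\partial Z^{\e}/\partial y_1$ against $\varphi(y,y/\e)\theta(t)$ by parts, passing to the two-scale limit and comparing with the two-scale limit of $Z^{\e}$ itself, one obtains $\zeta^1=\partial Z^0/\partial\eta_1\in L^2\bigl(0,T;L^2(\Om\times Y)\bigr)$, i.e. (\ref{eq:sec4-6}); taking $\varphi$ independent of $\eta$ in the same identity shows that $Z^0$ is $Y$-periodic, hence $Z^0\in L^2\bigl(0,T;L^2(\Om;H^1_{\sharp}(Y))\bigr)$. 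Multiplying the analogous identity for $\partial Z^{\e}/\partial y_2$ by $\e$ and letting $\e\to0$ forces $Z^0$ to be independent of $\eta_2$. Finally, testing $\partial Z^{\e}/\partial y_2$ against functions $\varphi\in{\mathcal D}\bigl(\Om\times(0,1)\bigr)$ and comparing with $-\int Z^0\,\partial\varphi/\partial y_2$ produces $Z^1\in L^2\bigl(0,T;L^2(\Om\times(0,1);H^1_{\sharp}(0,1)_{/\br})\bigr)$ with $\zeta^0=\partial Z^0/\partial y_2+\partial Z^1/\partial\eta_2$, which is the second convergence in (\ref{eq:sec4-5}).

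It remains to recover the homogeneous boundary conditions $Z^0\equiv0$ on $(\Gamma_0\cup\Gamma_1)\times(0,1)\times(0,T)$. Using $Z^{\e}=0$ on $\Gamma_0$, I would test $\partial Z^{\e}/\partial y_2$ against $\varphi^{\e}(y_1,y_2)=\varphi(y_1,y_1/\e)(1-y_2)$ with $\varphi\in{\mathcal D}\bigl(0,L;{\mathcal C}^{\infty}_{\sharp}(0,1)\bigr)$; after one integration by parts in $y_2$ and passing to the two-scale limit the bulk terms cancel and only the trace on $\Gamma_0$ survives, giving $Z^0=0$ on $(0,T)\times\Gamma_0\times(0,1)$. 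Repeating the computation with $\varphi^{\e}(y_1,y_2)=\varphi(y_1,y_1/\e)\,y_2$ and using now $Z^{\e}=0$ on $\Gamma_1$ as well yields $Z^0=0$ on $(0,T)\times\Gamma_1\times(0,1)$. There is no genuine obstacle here: the argument is essentially a line-by-line transcription of the $v^{\e}$ case with trivially simpler boundary terms; the only point deserving attention is to keep careful track, under the change of variables $z_1\mapsto(y_1,y_2)$, of the extra term $-y_2\,(h^{\e})^{-1}(\partial h^{\e}/\partial y_1)\,\partial/\partial y_2$ appearing in $b_{\e}\cdot\nabla$ and of the powers of $\e$ it carries (cf. (\ref{not2})), so as to be sure that every term meant to disappear in the limit actually does.
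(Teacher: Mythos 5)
Your proposal is correct and follows essentially the same lines as the paper: the first part (existence of $Z^0$, $Z^1$ and identification of the limits) is a verbatim transposition of the proof of Proposition \ref{prop4.1}, exactly as the paper itself observes, and your boundary-condition argument is the same integration-by-parts trick the paper uses for $v_1^0=0$ on $\Gamma_0$, applied here with the two weights $(1-y_2)$ and $y_2$ to isolate $\Gamma_0$ and $\Gamma_1$ respectively. The paper instead performs one integration by parts in the original $z$-variables using an arbitrary $\varphi\in{\mathcal C}^{\infty}\bigl(\overline{\Om};{\mathcal C}^{\infty}_{\sharp}(0,1)\bigr)$, passes to the two-scale limit, applies Green's formula in $y_2$ to produce boundary terms on both $\Gamma_0$ and $\Gamma_1$ at once, and only then specializes $\varphi=\hat\varphi(y_2)\tilde\varphi(y_1,\eta_1)$ with $\hat\varphi$ vanishing at $0$ or $1$; this is merely a cosmetic reorganization of the same argument.
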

\begin{proof}
The first part of the proof is identical to the proof of the previous
proposition. Let us establish now the boundary conditions for the limit $Z^0$.
Let $\theta \in {\mathcal D}(0,T)$, $\varphi \in {\mathcal C}^{\infty}
\bigl(\overline{ \Omega}; {\mathcal C}^{\infty}_{\sharp} (0,1) \bigr)$ such that
$\varphi$ is $L$-periodic in $y_1$ and we define
$\displaystyle \varphi^{\e} (z)= \varphi \left( z_1, \frac{z_2}{\e h^{\e} (
z_1)}, \frac{z_1}{\e} \right)$ for all $(z_1, z_2) \in \Omega^{\e}$.
With the boundary conditions (\ref{eqn:er2.8a})-(\ref{eqn:er2.11b}) for $Z^{\e}$
we get
\begin{eqnarray*}
&& \displaystyle \int_0^T \int_{\Omega^{\e}} \frac{\partial Z^{\e}}{\partial
z_2} (t,z) \varphi^{\e} (z) \theta (t) \, dz dt =
-  \int_0^T \int_{\Omega^{\e}} Z^{\e} (t,z) \frac{\partial \varphi^{\e}
}{\partial z_2} (z) \theta (t) \, dz dt \\
&& \displaystyle =  - \int_0^T \int_{\Omega} Z^{\e} (t,y) \frac{\partial
\varphi}{\partial y_2} \left( y, \frac{y_1}{\e} \right) \theta (t) \, dy dt =
\int_0^T \int_{\Omega} \frac{\partial Z^{\e}}{\partial y_2} (t,y) \varphi \left(
y, \frac{y_1}{\e} \right) \theta (t) \, dy dt
\end{eqnarray*}
and taking $\e \to 0^+$ we obtain
\begin{eqnarray*}
&& \displaystyle \int_0^T \int_{\Omega \times Y} \left( \frac{\partial
Z^0}{\partial y_2} (t, y, \eta_1) +  \frac{\partial Z^1}{\partial \eta_2} (t, y,
\eta) \right) \varphi (y, \eta_1) \theta (t) \, d\eta_1 dy dt\\
&& \displaystyle  = - \int_0^T \int_{\Omega \times (0,1)}  Z^0 (t, y, \eta_1)
\frac{\partial \varphi}{\partial y_2} (y, \eta_1) \theta (t) \, d \eta_1 dy dt.
\end{eqnarray*}
But the periodicity properties of $Z^1$ with respect to $\eta_2$ imply that
\begin{eqnarray*}
\int_0^T \int_{\Omega \times Y}  \frac{\partial Z^1}{\partial \eta_2} (t, y,
\eta) \varphi (y, \eta_1) \theta (t) \, d\eta_1 dy dt =0.
\end{eqnarray*}
Hence
\begin{eqnarray*}
\int_0^T \int_{\Omega \times (0,1)} \frac{\partial Z^0}{\partial y_2} (t, y,
\eta_1)  \varphi (y, \eta_1) \theta (t) \, d\eta_1 dy dt = - \int_0^T
\int_{\Omega \times (0,1)}  Z^0 (t, y, \eta_1) \frac{\partial \varphi}{\partial
y_2} (y, \eta_1) \theta (t) \, d \eta_1 dy dt.
\end{eqnarray*}
By Green's formula we infer that
\begin{eqnarray*}
0 = - \int_0^T \int_{\Gamma_0 \times (0,1)} Z^0 (t, y, \eta_1) \varphi (y,
\eta_1) \theta (t) \, d \eta_1 dy dt + \int_0^T \int_{\Gamma_1 \times (0,1)} Z^0
(t, y, \eta_1) \varphi (y, \eta_1) \theta (t) \, d \eta_1 dy dt.
\end{eqnarray*}

Now we choose $\varphi (y_1, y_2, \eta_1) = \hat \varphi (y_2) \tilde \varphi
(y_1, \eta_1)$ with $\hat \varphi \in {\mathcal C}^{\infty} \bigl( [0,1] \bigr)$
and $\tilde \varphi \in  {\mathcal C}^{\infty}_{\sharp}( [0,L]; {\mathcal
C}^{\infty}_{\sharp} (0,1) \bigr)$, with $\hat \varphi (1)=0$, $\hat \varphi (0)
= 1$ then $\hat \varphi (1)=1$, $\hat \varphi (0) = 0$, we get
\begin{eqnarray*}
0= \int_0^T \int_{\Gamma_0 \times (0,1)} Z^0 (t, y, \eta_1) \tilde \varphi (y_1,
\eta_1) \theta (t) \, d \eta_1 dy dt  =  \int_0^T \int_{\Gamma_1 \times (0,1)}
Z^0 (t, y, \eta_1) \tilde \varphi (y_1, \eta_1) \theta (t) \, d \eta_1 dy dt
\end{eqnarray*}
which allows us to conclude the proof of Proposition \ref{prop4.2}.
\end{proof}

Finally we can define the two-scale limit of $p^{\e}$.

\begin{proposition} \label{prop4.3}
{\bf (Two-scale limit of the pressure field)} Under the assumptions of
Proposition \ref{prop2},
there exists $p^0 \in H^{-1} \bigl( 0,T; L^2 (\Omega \times Y) \bigr)$ such
that, possibly extracting a subsequence still denoted $(p^{\e})_{\e >0}$, we
have
\begin{eqnarray*}
\e^2 p^{\e} \toH p^0.
\end{eqnarray*}
Moreover $p^0$ depends only $t$ and $y_1$, $p^0 \in H^{-1} \bigl( 0,T;
H^1_{\sharp} (0,1) \bigr)$ and satisfies \newline
$ \displaystyle \int_0^L p^0 (t,y_1) \left( \int_0^1   h(y_1, \eta_1) \, d
\eta_1 \right) \, dy_1 =0$ almost everywhere in $(0,T)$.
\end{proposition}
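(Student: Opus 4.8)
The plan is to combine the two-scale convergence theorem proved above with the rescaled momentum balance of Lemma~\ref{lemma3.1} and the a priori bounds of Section~\ref{uniformEst}. By Proposition~\ref{prop2} the sequence $(\e^{2}p^{\e})_{\e>0}$ is bounded in $H^{-1}(0,T;L^{2}(\Om))$, so the two-scale convergence theorem above directly yields $p^{0}\in H^{-1}(0,T;L^{2}(\Om\times Y))$ with $\e^{2}p^{\e}\toH p^{0}$ along a subsequence.

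Next, to identify the dependence of $p^{0}$, I would insert into the identity (\ref{eqvar}) multiplied by $\e$ (and tested against $\theta\in{\mathcal D}(0,T)$) test-functions $\Theta^{\e}$ oscillating at scale $\e$, and let $\e\to0$. Every term then remains bounded and passes to a two-scale limit, \emph{except} the pressure contribution: the time-derivative terms vanish after an integration by parts in $t$ together with the $L^{2}$-bound (\ref{E3.16}) on $v^{\e}$; the convective and rotational terms are $O(\e)$ by (\ref{E3.13})--(\ref{E3.16}); the viscous terms and the terms built on $\bar\xi^{\e}=(U^{\e}e_{1},W^{\e})$ converge (using (\ref{not}) and that $h^{\e}(y_{1})=h(y_{1},y_{1}/\e)$ oscillates only in $\eta_{1}$); the forcing terms are $O(1)$ since $\e^{2}\bar f^{\e}$ is bounded; whereas for an oscillating $\Theta^{\e}$ the pressure term carries a factor $1/\e$, so its leading $O(1/\e)$ part \emph{must} have vanishing two-scale limit. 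Concretely, with $\Theta^{\e}=(0,\varphi_{2}^{\e},0)$, $\varphi_{2}^{\e}(y)=\Phi_{2}(y_{1},y_{2},y_{1}/\e)$ ($\Phi_{2}$ smooth, $L$-periodic in $y_{1}$, $1$-periodic in $\eta_{1}$, vanishing for $y_{2}\in\{0,1\}$), this pressure term equals $\frac{1}{\e}\int_{\Om}\e^{2}p^{\e}\,(\partial_{y_{2}}\Phi_{2})(y,y/\e)\,dy$, whence $\int_{\Om\times Y}p^{0}\,\partial_{y_{2}}\Phi_{2}\,d\eta\,dy=0$, i.e. $\partial_{y_{2}}p^{0}=0$; with $\Theta^{\e}=(\varphi_{1}^{\e},0,0)$, $\varphi_{1}^{\e}(y)=\Phi_{1}(y_{1},y_{2},y_{1}/\e)$, the analogous argument (using $b_{\e}\cdot\nabla$ from (\ref{not})) produces, after integration by parts in $\eta_{1}$ and $y_{2}$, the cell relation $h\,\partial_{\eta_{1}}p^{0}=y_{2}\,(\partial_{\eta_{1}}h)\,\partial_{y_{2}}p^{0}$ in $(0,T)\times\Om\times(0,1)$, which combined with $\partial_{y_{2}}p^{0}=0$ and $h\ge h_{m}>0$ forces $\partial_{\eta_{1}}p^{0}=0$; and independence of $\eta_{2}$ follows from the bound $\|\partial p^{\e}/\partial y_{2}\|_{H^{-1}(0,T;H^{-1}(\Om))}\le C/\e$ recorded in (\ref{equPy3}), exactly as for $v^{0}$ and $Z^{0}$ in Propositions~\ref{prop4.1}--\ref{prop4.2}. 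Hence $p^{0}=p^{0}(t,y_{1})$.

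Finally, I would pass to the limit in (\ref{eqvar}) multiplied by $\e$ with \emph{non-oscillating} test-functions $\Theta=(\Phi,0,0)$ ($\Phi$ smooth, $L$-periodic in $y_{1}$, vanishing on $\Gamma_{0}\cup\Gamma_{1}$), handling the viscous, convective and rotational terms via the two-scale limits of $v^{\e}$, $Z^{\e}$ (Propositions~\ref{prop4.1}--\ref{prop4.2}); using the cancellation $\int_{0}^{1}(\partial_{\eta_{1}}h)(y_{1},\eta_{1})\,d\eta_{1}=0$, the pressure contribution collapses to a term of the form $-\langle\,\partial_{y_{1}}p^{0}\,,\ \bar h\,\int_{0}^{1}\Phi(\cdot,y_{2})\,dy_{2}\,\rangle$ with $\bar h(y_{1})=\int_{0}^{1}h(y_{1},\eta_{1})\,d\eta_{1}\ge h_{m}>0$, the remaining terms being bounded in terms of $\|\Phi\|$; this yields the asserted regularity $p^{0}\in H^{-1}(0,T;H^{1}_{\sharp}(0,1))$, the $L$-periodicity in $y_{1}$ being inherited from $p^{\e}$. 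For the normalization, $p^{\e}\in L^{2}_{0}(\Om^{\e})$ means $\int_{\Om^{\e}}p^{\e}\,dz=0$; the change of variables $z=(y_{1},\e h^{\e}(y_{1})y_{2})$ (Jacobian $\e h^{\e}$) recasts this as $\int_{\Om}p^{\e}h^{\e}\,dy=0$, and multiplying by $\e^{2}$ and using $\e^{2}p^{\e}\toH p^{0}$ against the admissible test-function $(y,\eta)\mapsto h(y_{1},\eta_{1})$ (its admissibility being noted in the Remark following the two-scale theorem) gives $\int_{\Om\times Y}p^{0}h\,d\eta\,dy=0$, that is $\int_{0}^{L}p^{0}(t,y_{1})\bigl(\int_{0}^{1}h(y_{1},\eta_{1})\,d\eta_{1}\bigr)\,dy_{1}=0$ for a.e. $t\in(0,T)$.

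The delicate point is the $\e$-bookkeeping of the second step: one must check carefully that, for oscillating $\Theta^{\e}$, each term of the identity (\ref{eqvar}) multiplied by $\e$ other than the leading pressure term stays bounded as $\e\to0$ — which is precisely where the scalings of $\bar f^{\e}$ and $\bar v^{\e}_{0}$, the estimates of Theorem~\ref{th2.1} and Proposition~\ref{pro1}, and the one-directional character of the roughness all enter — and that the integrations by parts producing the cell relation, together with their boundary contributions, are legitimate in view of the non-standard boundary condition on $\Gamma^{\e}_{1}$ and the $L$- and $1$-periodicities.
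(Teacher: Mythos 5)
Your handling of the compactness step, the normalization, and the independence of $p^0$ from $(y_2,\eta_1,\eta_2)$ essentially reproduces the paper's argument, with two small imprecisions: (i) the ordering — to deduce $\partial_{y_2}p^0=0$ from $\int p^0\,\partial_{y_2}\Phi_2\,d\eta\,dy=0$ with $\Phi_2$ independent of $\eta_2$ you must already know that $\partial_{\eta_2}p^0=0$, so the $\eta_2$-step should come first, as in the paper; and (ii) saying that $\eta_2$-independence follows from (\ref{equPy3}) ``exactly as for $v^0,Z^0$'' glosses over the fact that for the velocity the relevant bound is in $L^2$ whereas for the pressure it is only in $H^{-1}(0,T;H^{-1}(\Omega))$. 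The argument still closes, but one must note (this is the content of (\ref{eq:4.3.1})) that $\|\theta(t)\varphi(y,y/\e)\|_{H^1_0(0,T;H^1_0(\Omega))}\sim\e^{-1}$ and multiply once more by $\e$ before taking the two-scale limit.

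The genuine gap is in your derivation of $p^0\in H^{-1}(0,T;H^1_{\sharp}(0,L))$. With a non-oscillating $\Theta=(\Phi,0,0)$, admissibility forces $\Phi$ to vanish on both $\Gamma_0$ and $\Gamma_1$, hence $\partial_{y_2}\Phi\not\equiv 0$, and since $\partial_{y_1}h^{\e}=(\partial_{y_1}h)(\cdot,\cdot/\e)+\e^{-1}(\partial_{\eta_1}h)(\cdot,\cdot/\e)$ the operator $b_{\e}\cdot\nabla$ applied to $\Phi$ carries a term $-\e^{-1}\,y_2(h^{\e})^{-1}(\partial_{\eta_1}h)(y_1,y_1/\e)\,\partial_{y_2}\Phi$. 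The pressure contribution to $\e\times$(\ref{eqvar}) therefore contains the $\e^{-1}$-weighted piece
\[
-\frac{1}{\e}\int_0^T\int_\Omega \e^2 p^{\e}(t,y)\,y_2\,\frac{\partial h}{\partial \eta_1}\Bigl(y_1,\frac{y_1}{\e}\Bigr)\,\frac{\partial\Phi}{\partial y_2}(y)\,\theta(t)\,dy\,dt.
\]
The mean-zero property $\int_0^1\partial_{\eta_1}h\,d\eta_1=0$ only tells you that the two-scale \emph{limit} of the un-rescaled integral vanishes; it says nothing about its \emph{rate}, and in particular gives you no uniform bound on this $\e^{-1}$-weighted quantity, so you cannot discard it, and the claimed ``collapse'' to $-\langle\partial_{y_1}p^0,\ (\int_0^1 h\,d\eta_1)\int_0^1\Phi\,dy_2\rangle$ with a remainder controlled by $\|\Phi\|$ is not justified. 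This is exactly what the paper's corrected test function (\ref{eq:rajout1}) is for: its second component $\varphi^{\e}_2$ is proportional to $\partial_{y_1}h^{\e}$, hence $\mathcal O(1)$ in $\e$, and is built so that $(b_{\e}\cdot\nabla\varphi^{\e}_1)+\frac{1}{\e h^{\e}}\partial_{y_2}\varphi^{\e}_2$ reduces \emph{identically} to $y_2\varphi'(y_1)/h^{\e}$, with no $\e^{-1}$ term at all, which is what allows the pressure term to pass to the limit and yield the $H^1_{\sharp}(0,L)$-bound. Your sketch needs such a corrector (or an equivalent one) to close this step.
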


\begin{proof} The first part of the result is an immediate consequence of the
estimate (\ref{E3.18}) (see Proposition \ref{prop2}).
From proposition \ref{pro1} and (\ref{eq3.43}) we know that there exists a
constant $C>0$, independent of $\e$, such that for all $\varphi^{\e} \in H^1_0
\bigl(0,T; H^1_0(\Omega) \bigr)$ we have
\begin{eqnarray*}
&& \displaystyle \left| \int_0^T \int_{\Omega} p^{\e} (t,y) \frac{\partial
\varphi^{\e}}{\partial y_2} (t,y) \, dy dt \right|
\le  C  \left( \| \varphi^{\e} \|_{L^{2} (0,T; L^2(\Omega))} + \e \left\|
\frac{\partial \varphi^{\e}}{\partial t} \right\|_{L^2(0,T; L^2(\Omega))}
\right) \\
&& \displaystyle + \frac{C}{\e} \left(  \| \varphi^{\e} \|_{L^{\infty} (0,T;
L^4(\Omega))} + \| \e b_{\e} \cdot \nabla \varphi^{\e} \|_{L^{2} (0,T;
L^2(\Omega))} +  \left\| \frac{\partial \varphi^{\e}}{\partial y_2}
\right\|_{L^2(0,T; L^2(\Omega))} \right) .
\end{eqnarray*}
Now let $\varphi \in {\mathcal D}\bigl( \Omega; {\mathcal C}^{\infty}_{\sharp}
(Y) \bigr)$ and $\theta \in {\mathcal D}(0,T)$. We define $\varphi^{\e}(t,y) =
\theta (t) \varphi \left( y, \frac{y}{\e} \right)$ for all $(t,y) \in (0,T)
\times \Omega$ and we get
\begin{eqnarray}\label{eq:4.3.1}
\left| \int_0^T \int_{\Omega} \e^2 p^{\e} (t, y) \left( \frac{\partial
\varphi}{\partial y_2} \left(t, y, \frac{y}{\e} \right) + \frac{1}{\e}
\frac{\partial \varphi}{\partial \eta_2} \left(t, y, \frac{y}{\e} \right)
\right) \theta (t)  \, dy dt \right|
\nonumber\\
\le {\mathcal O}(\e) + C \|\theta\|_{C^0
([0,T])} \left\| \frac{\partial \varphi}{\partial \eta_2} \right\|_{C^0
(\overline{\Omega}; C_{\sharp}(Y))}.
\end{eqnarray}
We multiply the two members of this inequality by $\e$ and we pass to the limit
as $\e$ tends to zero. We obtain
\begin{eqnarray*}
\int_0^T \int_{\Omega \times Y} p^0 (t, y, \eta) \frac{\partial
\varphi}{\partial \eta_2} (y, \eta) \theta (t) \, d \eta dy dt =0.
\end{eqnarray*}
Hence $p^0$ does not depend on $\eta_2$. Now we consider $\varphi \in {\mathcal
D}\bigl( \Omega; {\mathcal C}^{\infty}_{\sharp} (0,1) \bigr)$ (i.e $\varphi$ is
independent of $\eta_2$) and we pass to the limit in (\ref{eq:4.3.1}) as $\e$
tends to zero. We get
\begin{eqnarray*}
\int_0^T \int_{\Omega \times (0,1)} p^0 (t, y, \eta) \frac{\partial
\varphi}{\partial y_2} (y, \eta_1) \theta (t) \, d \eta dy dt =0
\end{eqnarray*}
which implies that $p^0$ does not depend on $y_2$.

Now we take $\Theta = (\varphi^{\e}, 0, 0)$ in (\ref{eqvar}) and we multiply by
$\e$: we get
 \begin{eqnarray}\label{e3.43}
\e^{2} \int_{0}^{T}\int_{\Om} p^{\e}
\left( {\partial\varphi^{\e} \over \partial y_{1}}
- {y_{2}\over h^{\e}}  {\partial h^{\e}\over \partial y_{1}}
{\partial\varphi^{\e} \over \partial y_{2}}
\right) h^{\e} dy dt
=
-\e^{2}\int_{0}^{T} \int_{\Om} v^{\e}_{1} {\partial\varphi^{\e}\over \partial t}
h^{\e}
dy dt
\nonumber\\
+ (\nu+\nu_{r})
\int_{0}^{T} \int_{\Om}\left(
h^{\e}(\e b_{\e}\cdot  \nabla v^{\e}_{1}) (\e b_{\e}\cdot\nabla\varphi^{\e})
+{1\over h^{\e}}{\partial v^{\e}_{1}\over \partial y_{2}}
           {\partial\varphi^{\e}\over \partial y_{2}}\right) dy dt
\nonumber\\
+\int_{0}^{T}\int_{\Om} \left(
\e v^{\e}_{1}(\e b_{\e}\cdot  \nabla v^{\e}_{1})\varphi^{\e} h^{\e}
 + \e v^{\e}_{2}{\partial v^{\e}_{1}\over \partial
y_{2}}\varphi^{\e} \right)  dy
dt
+ (\nu+\nu_{r})\int_{0}^{T} \int_{\Om} \frac{1}{h^{\e}}
{\partial U_{\e}\over \partial y_{2}}{\partial\varphi^{\e} \over \partial y_{2}}
dy
dt
\nonumber\\
-2\nu_{r}
\int_{0}^{T} \int_{\Om} \e
{\partial Z^{\e}\over \partial y_{2}}\varphi^{\e}   dy dt
+ \int_{0}^{T} \int_{\Om}U_{\e}
(\e b_{\e}\cdot  \nabla v^{\e}_{1})\varphi^{\e} \e h^{\e} dy dt
\nonumber\\
- \int_{0}^{T} \int_{\Om}
\e v^{\e}_{1}(\e b_{\e}\cdot  \nabla\varphi^{\e})U_{\e} h^{\e} dy dt
-  \int_{0}^{T} \int_{\Om}
\e v^{\e}_{2}{\partial \varphi^{\e} \over \partial y_{2}}U_{\e} dy dt
-2\e \nu_{r}\int_{0}^{T} \int_{\Om}
     {\partial W_{\e}\over \partial y_{2}}\varphi^{\e} dy dt
\nonumber\\
- \int_{0}^{T} \int_{\Om}
\left( f^{\e}_{1}\varphi^{\e} -{\partial U^{\e} \over \partial
t}\varphi^{\e} \right) \e^{2} h^{\e} dy dt,
\end{eqnarray}
where we recall that $U_{\e} (t, y) = U_0(t) {\mathcal U}(y_2 h^{\e} (y_1))$ and
$W_{\e} (t, y) = W_0(t) {\mathcal W}(y_2 h^{\e} (y_1))$ for all $(t, y_1, y_2)
\in [0,T] \times \Om$.
With the results of Proposition \ref{pro1},  we infer  that there exists a
constant $C>0$, independent of $\e$, such that
\begin{eqnarray*}
&& \displaystyle  \left| \int_0^T \int_{\Omega} \e^2 p^{\e} (t,y) ( b_{\e} \cdot
\nabla \varphi^{\e})  (t,y) h^{\e} (y) \, dy dt \right|  \\
&& \displaystyle
\le C \left( \| \varphi^{\e} \|_{L^{2}(0,T; L^2(\Omega))} + \| \e b_{\e} \cdot
\nabla \varphi^{\e} \|_{L^{2}(0,T; L^2(\Omega))}  + \left\| \frac{\partial
\varphi^{\e}}{\partial y_2} \right\|_{L^{2}(0,T; L^2(\Omega))}
+  \| \varphi^{\e} \|_{L^{\infty}(0,T; L^4(\Omega))}\right) \\
&& \displaystyle  + C \e^2 \left\| \frac{\partial \varphi^{\e}}{\partial t}
\right\|_{L^{2}(0,T; L^2(\Omega))} .
\end{eqnarray*}
We multiply the two members of this inequality by $\e$ and we obtain
\begin{eqnarray*}
&& \displaystyle \left| \int_0^T \int_{\Omega} \e^2 p^{\e} (t,y) \left( \e
\frac{\partial \varphi}{\partial y_1}
\left(y_1, y_2, \frac{y_1}{\e} \right) +  \frac{\partial \varphi}{\partial
\eta_1}
\left(y_1, y_2, \frac{y_1}{\e} \right)
 \right) h \left(y_1, \frac{y_1}{\e} \right)  \theta (t) \, dy dt  \right. \\
 && \displaystyle \left.
- \int_0^T \int_{\Omega} \e^2 p^{\e} (t,y) y_2  \left( \e \frac{\partial
h}{\partial y_1}
\left(y_1, \frac{y_1}{\e} \right) +  \frac{\partial h}{\partial \eta_1}
\left(y_1, \frac{y_1}{\e} \right)
 \right) \frac{\partial \varphi}{\partial y_2} \left(y_1, y_2, \frac{y_1}{\e}
\right)  \theta (t) \, dy dt
 \right|  \le {\mathcal O}(\e)
\end{eqnarray*}
By taking the limit as $\e$ tends to zero, we have
\begin{eqnarray*}
\int_0^T \int_{\Om \times (0,1)} p^0 (t, y_1, \eta_1)\left(   \frac{\partial
\varphi}{\partial \eta_1} (y_1, y_2, \eta_1)  h( y_1, \eta_1)  - y_2
\frac{\partial h}{\partial \eta_1}
(y_1, \eta_1) \frac{\partial \varphi}{\partial y_2} (y_1, y_2, \eta_1) \right)
\theta(t) \, d \eta_1 d y dt =0.
\end{eqnarray*}
Reminding that $p^0$ is independent of $y_2$ and $\varphi \in {\mathcal D}\bigl(
\Omega; {\mathcal C}^{\infty}_{\sharp} (0,1) \bigr)$, we get
\begin{eqnarray*}
&& \displaystyle \int_0^T \int_{\Om \times (0,1)} p^0 (t, y_1, \eta_1)\left(
\frac{\partial \varphi}{\partial \eta_1} (y_1, \eta_1)  h( y_1, y_2, \eta_1)  -
y_2 \frac{\partial h}{\partial \eta_1}
(y_1, \eta_1) \frac{\partial \varphi}{\partial y_2} (y_1, y_2, \eta_1) \right)
\theta(t) \, d \eta_1 d y dt \\
&& \displaystyle =  \int_0^T \int_{ \Om \times (0,1)} p^0 (t, y_1, \eta_1)\left(
  \frac{\partial \varphi}{\partial \eta_1} (y_1, y_2, \eta_1)  h( y_1, \eta_1)
+ \frac{\partial h}{\partial \eta_1}
(y_1, \eta_1)  \varphi (y_1, y_2, \eta_1) \right)
\theta(t) \, d \eta_1 d y dt \\
&& \displaystyle =  \int_0^T \int_{\Om \times (0,1)} p^0 (t, y_1, \eta_1) \frac{
\partial ( h \varphi) }{\partial \eta_1} (y_1, y_2, \eta_1) \, d \eta_1 dy dt
=0.
\end{eqnarray*}
Then for any $\phi \in {\mathcal D}\bigl( \Omega; {\mathcal C}^{\infty}_{\sharp}
(0,1) \bigr)$, we may define $\displaystyle \varphi = \frac{\phi}{h} \in
{\mathcal D}\bigl( \Omega; {\mathcal C}^{\infty}_{\sharp} (0,1) \bigr)$ and we
obtain
\begin{eqnarray*}
 \int_0^T \int_{\Om \times (0,1)} p^0 (t, y_1, \eta_1) \frac{ \partial
\phi}{\partial \eta_1} (y_1, y_2, \eta_1) \, d \eta_1 dy dt
=0.
\end{eqnarray*}
Thus we can conclude that $p^0 $ is independent of $\eta_1$.

Now let $\varphi \in {\mathcal C}^{\infty}_{\sharp} (0,L) $ and $\theta \in
{\mathcal D}(0,T)$. We define
$\varphi^{\e}$  by
\begin{eqnarray}\label{eq:rajout1}
\varphi^{\e} (y) = \frac{\varphi(y_1)}{h\left(y_1, \frac{y_1}{\e} \right)}
\left( y_2 e_1 + \e y_2^2 \left(  \frac{\partial h}{\partial y_1} \left(y_1,
\frac{y_1}{\e} \right) + \frac{1}{\e}  \frac{\partial h}{\partial \eta_1}
\left(y_1, \frac{y_1}{\e} \right) \right) e_2 \right)
\end{eqnarray}
for all $(y_1, y_2) \in \Om$. We can check that $\varphi^{\e} \in \tilde V$ and with Lemma \ref{lemma3.1},
Proposition \ref{pro1} and (\ref{eq3.43})-(\ref{e3.43}), we obtain
\begin{eqnarray*}
\displaystyle \left| \int_0^T \int_{\Om} \e^2 p^{\e} \left( (b_{\e} \cdot \nabla
\varphi^{\e}_1) + \frac{1}{\e h^{\e}} \frac{\partial \varphi^{\e}_2}{\partial
y_2} \right) h^{\e} \theta (t) \, dy dt \right|
 \displaystyle \le  {\mathcal O}( \e) + C \| \varphi \theta \|_{L^2 ((0,T)
\times (0,L))}
\end{eqnarray*}
with a constant $C>0$ independent of $\e$. Hence
\begin{eqnarray*}
\left|  \int_0^T \int_{\Om} \e^2 p^{\e} (t,y) y_2 \frac{\partial
\varphi}{\partial y_1} (y_1) \theta (t) \, dy dt \right|
\le  {\mathcal O}( \e) + C \| \varphi \theta \|_{L^2 ((0,T) \times (0,L))} .
\end{eqnarray*}
We pass to the limit as $\e$ tends to zero:
\begin{eqnarray*}
\left|  \int_0^T \int_{\Om}  p^{0} (t,y_1) y_2 \frac{\partial
\varphi}{\partial y_1} (y_1)   \theta (t) \, dy dt \right| = \frac{1}{2}
\left|  \int_0^T \int_0^L p^{0} (t,y_1)  \frac{\partial \varphi}{\partial y_1}
(y_1)   \theta (t) \, dy dt \right|
\le   C \| \varphi \theta \|_{L^2 ((0,T) \times (0,L))}
\end{eqnarray*}
and we infer that $p^0 \in H^{-1} \bigl( 0,T; H^1_{\sharp} (0,L) \bigr)$.

Finally, recalling that $\displaystyle \int_{\Omega^{\e}} p^{\e} (t,z) \, dz =0$
almost everywhere in $(0,T)$, we have
\begin{eqnarray*}
\int_0^T \int_{\Omega} {\e}^2 p^{\e} (t,y) h^{\e} (y) \theta (t) \, dy dt =0
\quad \forall \theta \in {\mathcal D} (0,T)
\end{eqnarray*}
and by passing to the limit as $\e$ tends to zero, we get
\begin{eqnarray*}
\int_0^T \int_{\Omega \times (0,1)}  p^{0} (t,y_1, \eta_1) h (y_1, \eta_1)
\theta (t) \, d \eta_1 dy dt =0 \quad \forall \theta \in {\mathcal D} (0,T)
\end{eqnarray*}
which allows us to conclude the proof of Proposition \ref{prop4.3}.
\end{proof}

\renewcommand{\theequation}{5.\arabic{equation}}
\setcounter{equation}{0}
\section{The limit problem}\label{limitprobl}

Now let us pass to the limit in equation (\ref{eqn:er2.14}). It is convenient to
introduce the following functional spaces:
\begin{eqnarray*}
&& \displaystyle \tilde V = \left\{ \varphi \in \bigl( {\mathcal C}^{\infty} (
\overline{ \Omega} ; {\mathcal C}^{\infty}_{\sharp} (0,1) \bigr) \bigr)^2; \,
 \varphi \,  \hbox{\rm is L-periodic in }  y_{1}, \ \varphi = 0 \  \hbox{\rm on }
\Gamma_0 \times (0,1),  \right. \\
&& \displaystyle \left. \qquad \ \   \ - \varphi_1 \frac{\partial h}{\partial
\eta_1}+ \varphi_2 =0 \ \hbox{\rm on } \Gamma_1 \times (0,1) \right\}
\end{eqnarray*}
\begin{eqnarray*}
\tilde V_{div} = \left\{ \varphi \in \tilde V; \ h \frac{\partial
\varphi_1}{\partial \eta_1} - y_2 \frac{\partial h}{\partial \eta_1}
\frac{\partial \varphi_1}{\partial y_2} + \frac{\partial \varphi_2}{\partial
y_2} =0 \ \hbox{\rm in } \Omega \times (0,1) \right\}
\end{eqnarray*}
\begin{eqnarray*}
\tilde H^1 = \left\{ \psi \in {\mathcal C}^{\infty} (\overline{ \Omega} ;
{\mathcal C}^{\infty}_{\sharp} (0,1) \bigr) ;  \,   \psi \mbox{  is
L-periodic in }  y_{1},  \, \psi=0 \ \mbox{ on } (\Gamma_{0} \cup \Gamma_{1}
\times (0,1) \right\} ,
\end{eqnarray*}
\begin{eqnarray*}
\begin{array}{l}
\displaystyle V_{div} = \mbox{ closure of } \tilde V_{div} \mbox{ in }
L^2_{\sharp}
\bigl(0,L; {\mathcal F} \bigr)^{2},\\
\displaystyle  H^1_{0 , \sharp} = \hbox{\rm closure of } \tilde H^1 \mbox{ in }
L^2_{\sharp} \bigl( 0,L; {\mathcal F} \bigr)
\end{array}
\end{eqnarray*}
with
\begin{eqnarray*}
{\mathcal F} = \left\{ v \in L^2 \bigl( (0,1);  H^1_{\sharp} (0,1) \bigr) ,
\frac{\partial v}{\partial y_2} \in L^2 \bigl((0,1)  \times (0,1) \bigr)
\right\}.
\end{eqnarray*}

\begin{theorem} \label{limit_pb}
Assume that there exist $f \in {\mathcal C}\bigl( [0,T] ; {\mathcal C} \bigl(
\overline{\Om}; {\mathcal C}_{\sharp}(0,1) \bigr) \bigr)^2$ and $g \in {\mathcal
C}\bigl( [0,T] ; {\mathcal C} \bigl( \overline{\Om}; {\mathcal C}_{\sharp}(0,1)
\bigr) \bigr)$ such that $f$ and $g$ are $L$-periodic in $y_1$ and
\begin{eqnarray*}
\e^2 f^{\e} (t,y) = f \left( t, y, \frac{y_1}{\e} \right), \quad \e^2 g^{\e}
(t,y) = g \left( t, y, \frac{y_1}{\e} \right) \quad \forall (t,y) \in [0,T]
\times \overline{\Om}.
\end{eqnarray*}
Then the functions $v^0$, $Z^0$ and $p^0$ satisfy the following limit problem:
\begin{eqnarray*}
&& \displaystyle  ( \nu + \nu_r)  \int_0^T \int_{\Omega \times (0,1) }
\sum_{i=1}^2 \left(
h (\bar b \cdot \nabla v_i^0) (\bar b \cdot \nabla \varphi_i) + \frac{1}{h}
\frac{\partial v_i^0}{\partial y_2} \frac{\partial \varphi_i}{\partial y_2}
\right) \theta \, d\eta_1 d y dt \\
&& \displaystyle + \alpha \int_0^T  \int_{\Omega \times (0,1)} \left(
h (\bar b \cdot \nabla Z^0) (\bar b \cdot \nabla \psi) + \frac{1}{h}
\frac{\partial Z^0}{\partial y_2}
 \frac{\partial \psi}{\partial y_2} \right) \theta \, d\eta_1 d y dt \\
&& \displaystyle  - \int_0^T \int_{\Omega \times (0,1)} \frac{\partial
p^0}{\partial y_1} h \varphi_1 \theta \, d \eta_1 dy dt \\
&& \displaystyle = -  ( \nu + \nu_r)  \int_0^T \int_{\Omega \times (0,1) }
\left(
h (\bar b \cdot \nabla \bar U) (\bar b \cdot \nabla \varphi_1) + \frac{1}{h}
\frac{\partial \bar U}{\partial y_2} \frac{\partial \varphi_1}{\partial y_2}
\right) \theta \, d\eta_1 d y dt \\
&& \displaystyle  - \alpha \int_0^T  \int_{\Omega \times (0,1)} \left(
h (\bar b \cdot \nabla \bar W) (\bar b \cdot \nabla \psi) + \frac{1}{h}
\frac{\partial \bar W}{\partial y_2}
 \frac{\partial \psi}{\partial y_2} \right) \theta \, d\eta_1 d y dt \\
&& \displaystyle + \int_0^T  \int_{\Omega \times (0,1)} f \varphi h \theta \, d
\eta_1 d y dt
+ \int_0^T  \int_{\Omega \times (0,1)} g \psi h \theta \, d \eta_1 d y dt
\end{eqnarray*}
for all $\Theta = (\varphi, \psi) \in  V_{div} \times  H^1_{0 {\sharp}}$ and
$\theta \in {\mathcal D} (0,T)$, where $\bar b \cdot \nabla$ is the differential
operator defined by
 \begin{eqnarray*}
\bar b \cdot \nabla = \left( 1, - \frac{y_2}{h (y_1, \eta_1)} \frac{\partial
h}{\partial \eta_1} (y_1, \eta_1) \right)
\left(\begin{array}[c]{c}
\displaystyle  {\partial \over\partial \eta_{1}} \\ \\
\displaystyle {\partial \over\partial y_{2}}
\end{array}
\right)
\end{eqnarray*}
and
\begin{eqnarray*}
&& \displaystyle   \bar U ( t, y_1, y_2, \eta_1) = U_0 (t) {\mathcal U} \left(
h(y_1, \eta_1) y_2 \right), \\
&& \displaystyle \bar W (t,  y_1, y_2, \eta_1) = W_0 (t) {\mathcal W}  \left(
h(y_1, \eta_1) y_2 \right)
 \end{eqnarray*}
for all $(t, y_1, y_2, \eta_1, t) \in [0,T] \times \overline{\Omega} \times
[0,1] $.
\end{theorem}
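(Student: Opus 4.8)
The plan is to pass to the limit $\e\to 0$ in the rescaled variational identity (\ref{eqvar}) of Lemma~\ref{lemma3.1}, after multiplying it by $\e\,\theta(t)$ with $\theta\in{\cal D}(0,T)$ and integrating over $(0,T)$, using the uniform bounds of Propositions~\ref{pro1}--\ref{prop2} and the two-scale convergences of Propositions~\ref{prop4.1}--\ref{prop4.3}. Concretely, I would fix $\Theta=(\varphi,\psi)\in\tilde V_{div}\times\tilde H^1$, insert a suitable oscillating test-function $\Theta^\e=(\varphi^\e,\psi^\e)\in V^\e\times H^{1,\e}$ built from $\Theta$, and estimate each resulting term; the identity for general $\Theta\in V_{div}\times H^1_{0,\sharp}$ then follows by density, all the terms of the limit problem being continuous for the $L^2_\sharp(0,L;{\cal F})$-topology on $(\varphi,\psi)$, while $v^0\in V_{div}$ and $Z^0\in H^1_{0,\sharp}$ by Propositions~\ref{prop4.1}--\ref{prop4.2}.

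The first step is the construction of $\Theta^\e$. For the micro-rotation, $\psi^\e(z)=\psi\left(z_1,\frac{z_2}{\e h^\e(z_1)},\frac{z_1}{\e}\right)$ already lies in $H^{1,\e}$ since $\psi$ vanishes on $(\Gamma_0\cup\Gamma_1)\times(0,1)$. For the velocity, the naive choice $\varphi^\e(z)=\varphi\left(z_1,\frac{z_2}{\e h^\e(z_1)},\frac{z_1}{\e}\right)$ fails the condition $-\e(h^\e)'\varphi_1^\e+\varphi_2^\e=0$ on $\Gamma_1^\e$ only by $-\e\,\partial_{y_1}h\,\varphi_1^\e$, because $\e(h^\e)'=\e\,\partial_{y_1}h+\partial_{\eta_1}h$ while $\tilde V$ only cancels $-\partial_{\eta_1}h\,\varphi_1+\varphi_2$; so I would replace the second component by $\varphi_2^\e+\e\,y_2\,(\partial_{y_1}h)\left(z_1,\frac{z_1}{\e}\right)\varphi_1^\e$. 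A direct computation then shows that the corrected $\varphi^\e$ belongs to $V^\e$, that the correction and its rescaled gradient $\e\,b_\e\cdot\nabla(\cdot)$ are $O(\e)$ in $L^\infty(\Om)$ hence negligible in the limit, and --- crucially --- that using $\e\,b_\e\cdot\nabla\varphi_1^\e=(\bar b\cdot\nabla\varphi_1)\left(z_1,\cdot,\frac{z_1}{\e}\right)+O(\e)$ together with the incompressibility relation defining $\tilde V_{div}$, the rescaled divergence collapses to
\[
h^\e\,(\e\,b_\e\cdot\nabla\varphi_1^\e)+\frac{\partial\varphi_2^\e}{\partial y_2}=\e\,\left[\partial_{y_1}(h\varphi_1)\right]\left(z_1,\cdot,\frac{z_1}{\e}\right),
\]
so that the pressure pairing in (\ref{eqvar}) equals $\e\int_\Om p^\e\left[\partial_{y_1}(h\varphi_1)\right]\left(\cdot,\frac{y_1}{\e}\right)dy$.

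Then I would pass to the limit term by term in $\e\times$(\ref{eqvar}). The time-derivative contributions carry a factor $\e^2$ and vanish, $v^\e$, $Z^\e$ (Proposition~\ref{pro1}) and $\bar\xi^\e_t$ (Lemma~\ref{lUW}) being bounded in $L^2((0,T)\times\Om)$. The trilinear terms $\hat B(\bar v^\e,\bar v^\e,\cdot)$, $\hat B(\bar\xi^\e,\bar v^\e,\cdot)$, $\hat B(\bar v^\e,\bar\xi^\e,\cdot)$ and the rotational terms $\hat{\cal R}(\bar v^\e,\cdot)$, $\hat{\cal R}(\bar\xi^\e,\cdot)$ are, by (\ref{trifb}), (\ref{Rot}), (\ref{Rot1}), (\ref{Rott}), (\ref{Rotxi}) and the estimates (\ref{E3.13})--(\ref{E3.16}), bounded in $L^1(0,T)$ uniformly in $\e$, so the extra factor $\e$ sends them to zero --- this is exactly why neither the convective terms nor the micropolar coupling survives and why only the time variable remains as a parameter. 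In $\hat a(\bar v^\e,\Theta^\e)$ (see (\ref{fla})) I would pass to the limit using $\e\,b_\e\cdot\nabla v_i^\e\toH\bar b\cdot\nabla v_i^0$ up to a $\partial_{\eta_2}v_i^1$-term (Proposition~\ref{prop4.1}), $\partial_{y_2}v_i^\e\toH\partial_{y_2}v_i^0+\partial_{\eta_2}v_i^1$, together with the strong two-scale convergences $\e\,b_\e\cdot\nabla\varphi_i^\e\to\bar b\cdot\nabla\varphi_i$ and $\partial_{y_2}\varphi_i^\e\to\partial_{y_2}\varphi_i$; since these test-function limits do not depend on $\eta_2$, the $v_i^1$-contributions integrate out and one recovers $(\nu+\nu_r)\int h(\bar b\cdot\nabla v_i^0)(\bar b\cdot\nabla\varphi_i)+\frac1h\partial_{y_2}v_i^0\partial_{y_2}\varphi_i$, the $Z^\e$-part being handled identically with Proposition~\ref{prop4.2}. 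The term $\hat a(\bar\xi^\e,\Theta^\e)$, evaluated analogously to (\ref{For3.10}), converges by the mean-value property of oscillating integrands to the right-hand side involving $\bar U,\bar W$; note that $\bar b\cdot\nabla\bar U=\bar b\cdot\nabla\bar W=0$ by the analogue of (\ref{For3.8})--(\ref{For3.9}), so only the $\partial_{y_2}$-parts remain. The data term $\e^2\int_\Om\bar f^\e\Theta^\e h^\e=\int_\Om(\e^2\bar f^\e)\Theta^\e h^\e$ converges to $\int_{\Om\times(0,1)}(f\varphi+g\psi)h\,d\eta_1$ since $\e^2f^\e=f(\cdot,\frac{y_1}{\e})$ and $\e^2g^\e=g(\cdot,\frac{y_1}{\e})$. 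Finally, the pressure pairing $\int_\Om(\e^2p^\e)\left[\partial_{y_1}(h\varphi_1)\right]\left(\cdot,\frac{y_1}{\e}\right)dy$ converges, by Proposition~\ref{prop4.3} ($\e^2p^\e\toH p^0$, $p^0=p^0(t,y_1)$), to $\int_{\Om\times(0,1)}p^0\,\partial_{y_1}(h\varphi_1)\,d\eta_1\,dy$, which after an integration by parts in $y_1$ (using $L$-periodicity and $p^0\in H^{-1}(0,T;H^1_\sharp(0,L))$) produces the Reynolds-type pressure term $-\int_{\Om\times(0,1)}\frac{\partial p^0}{\partial y_1}h\varphi_1\,d\eta_1\,dy$ of the statement. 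Collecting all limits and integrating against $\theta$ gives the asserted identity.

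The main obstacle is the pressure term: one must choose the $O(\e)$ correction of the test-function so that $\varphi^\e$ lies \emph{exactly} in $V^\e$ while, at the same time, the rescaled divergence collapses --- thanks precisely to the limiting incompressibility relation defining $\tilde V_{div}$ --- to the clean expression $\e\,\partial_{y_1}(h\varphi_1)$; a less careful choice leaves a residual boundary contribution on $\Gamma_1$ that would spoil the identification of the limit pressure. The remaining steps are routine, if lengthy, two-scale passages to the limit.
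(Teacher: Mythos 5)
Your proposal reproduces the paper's proof essentially verbatim: the same corrected test function $\varphi^\e$ (with the second component shifted by $\e y_2\,\partial_{y_1}h\,\varphi_1^\e e_2$ so that $\varphi^\e\in V^\e$), the same cancellation via the incompressibility relation of $\tilde V_{div}$ giving the rescaled divergence $\e\,\partial_{y_1}(h\varphi_1)$, the same two-scale passages in the viscous and lifting terms with the $v^1,Z^1$ contributions integrating out by $\eta_2$-periodicity, the same $O(\e)$/$O(\e^2)$ bounds killing the trilinear, rotational and time-derivative terms, and the concluding density argument. No gap; this is the paper's argument.
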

\begin{proof} With the above assumptions for $f^{\e}$ and $g^{\e}$ we can check
immediately that
\begin{eqnarray*}
\| \e^2 f^{\e} \|_{L^2 ((0,T) \times \Om)} \le \sqrt{ T | \Om | } \|
f\|_{{\mathcal C}\bigl( [0,T] ; {\mathcal C} \bigl( \overline{\Om}; {\mathcal
C}_{\sharp}(0,1) \bigr) \bigr)}, \quad
\| \e^2 g^{\e} \|_{L^2 ((0,T) \times \Om )} \le \sqrt{ T | \Om | } \| g
\|_{{\mathcal C}\bigl( [0,T] ; {\mathcal C} \bigl( \overline{\Om}; {\mathcal
C}_{\sharp}(0,1) \bigr) \bigr)}
\end{eqnarray*}
and
\begin{eqnarray*}
\e^2 f^{\e} \toH f, \quad \e^2 g^{\e} \toH g.
\end{eqnarray*}
Let us recall that
\begin{eqnarray*}
b_{\e} \cdot \nabla = \left( 1, - \frac{y_2}{h^{\e} (y_1)} \frac{\partial
h^{\e}}{\partial y_1} (y_1) \right)
\left(\begin{array}[c]{c}
\displaystyle  {\partial \over\partial y_{1}} \\ \\
\displaystyle {\partial \over\partial y_{2}}
\end{array}
\right)
\end{eqnarray*}
 Taking into account the convergence results of Proposition \ref{prop4.1} and
Proposition \ref{prop4.2}, we get
 \begin{eqnarray*}
&& \displaystyle \e  b_{\e} \cdot  \nabla v_i^{\e} = \e \frac{\partial
v_i^{\e}}{\partial y_1} (y)
 - \frac{y_2}{h \left( y_1, \frac{y_1}{\e} \right)} \left( \e \frac{\partial
h}{\partial y_1}  \left( y_1, \frac{y_1}{\e} \right) +  \frac{\partial
h}{\partial \eta_1}  \left( y_1, \frac{y_1}{\e} \right) \right)
 \frac{\partial v_i^{\e} }{\partial y_2} (y) \\
&& \displaystyle  \toH \frac{\partial v^0_i}{\partial \eta_1} (y, \eta_1)
  - \frac{y_2}{h (y_1, \eta_1) }  \frac{\partial h}{\partial \eta_1}  (y_1,
\eta_1)
\left(  \frac{\partial v^0_i }{\partial y_2}  ( y, \eta_1  ) + \frac{\partial
v^1_i}{\partial \eta_2} (y, \eta) \right) = \bar b \cdot \nabla v^0_i -
\frac{y_2}{h }  \frac{\partial h}{\partial \eta_1}
 \frac{\partial v^1_i}{\partial \eta_2}
 \end{eqnarray*}
 for $i=1,2$ and
  \begin{eqnarray*}
   \displaystyle b_{\e} \cdot  \nabla Z^{\e} \toH  \bar b \cdot \nabla Z^0 -
\frac{y_2}{h  }  \frac{\partial h}{\partial \eta_1}
 \frac{\partial Z^1}{\partial \eta_2}  .
 \end{eqnarray*}

Similarly, let $\phi \in {\mathcal C}^{\infty} \bigl( \overline{ \Omega};
{\mathcal C}^{\infty}_{\sharp} (0,1) \bigr)$ and $\displaystyle \phi^{\e}
(y_1,y_2) = \phi \left( y_1, y_2, \frac{y_1}{\e} \right)$ for all $(y_1, y_2)
\in \Omega$.
We have
\begin{eqnarray*}
&& \displaystyle b_{\e} \cdot  \nabla \phi^{\e} = \frac{\partial
\phi^{\e}}{\partial y_1} (y)
 - \frac{y_2}{h \left( y_1, \frac{y_1}{\e} \right)} \left( \frac{\partial
h}{\partial y_1}  \left( y_1, \frac{y_1}{\e} \right) + \frac{1}{\e}
\frac{\partial h}{\partial \eta_1}  \left( y_1, \frac{y_1}{\e} \right) \right)
 \frac{\partial \phi^{\e} }{\partial y_2} (y) \\
 && \displaystyle = \frac{\partial \phi}{\partial y_1}  \left( y, \frac{y_1}{\e}
 \right) + \frac{1}{\e} \frac{\partial \phi}{\partial \eta_1}  \left( y,
\frac{y_1}{\e}  \right)  - \frac{y_2}{h \left( y_1, \frac{y_1}{\e} \right)}
\left( \frac{\partial h}{\partial y_1}  \left( y_1, \frac{y_1}{\e} \right) +
\frac{1}{\e} \frac{\partial h}{\partial \eta_1}  \left( y_1, \frac{y_1}{\e}
\right) \right)
 \frac{\partial \phi }{\partial y_2}  \left( y, \frac{y_1}{\e}  \right) .
 \end{eqnarray*}
Now let $\theta \in {\mathcal D}( 0,T)$, $\Theta = (\varphi, \psi) \in \tilde
V_{div} \times \tilde H^1$ and let $\Theta^{\e} = (\varphi^{\e}, \psi^{\e})$
with
\begin{eqnarray*}
 \varphi^{\e} (z) = \varphi \left( z_1, \frac{ z_2}{\e h^{\e} (z_1)} ,
\frac{z_1}{\e} \right) +  \frac{ z_2}{ h^{\e} (z_1)}  \frac{\partial h}{\partial
y_1} \left( z_1, \frac{z_1}{\e} \right) \varphi_1 \left( z_1, \frac{z_2}{\e
h^{\e}(z_1)}, \frac{z_1}{\e} \right) e_2
\end{eqnarray*}
 and $\displaystyle \psi^{\e} (z) = \psi \left( z_1, \frac{z_2}{\e h^{\e} (z_1)}
, \frac{z_1}{\e} \right)$ for all $(z_1, z_2) \in \Omega^{\e}$. We have
$\Theta^{\e}  \in \tilde V^{\e} \times \tilde H^{1, \e}$ and  from (\ref{fla})
\begin{eqnarray*}
&& \displaystyle \e \int_0^T  a \bigl( \bar v^{\e} (t) , \Theta^{\e} \bigr)
\theta (t) \, dt  \to
( \nu + \nu_r)  \int_0^T \int_{\Omega \times Y} \sum_{i=1}^2 \left(
h (\bar b \cdot \nabla v_i^0) (\bar b \cdot \nabla \varphi_i) + \frac{1}{h}
\frac{\partial v_i^0}{\partial y_2} \frac{\partial \varphi_i}{\partial y_2}
\right) \theta \, d\eta d y dt \\
&& \displaystyle + \alpha \int_0^T \int_{\Omega \times Y} \left(
h (\bar b \cdot \nabla Z^0) (\bar b \cdot \nabla \psi) + \frac{1}{h}
\frac{\partial Z^0}{\partial y_2}
 \frac{\partial \psi}{\partial y_2} \right) \theta \, d\eta d y dt \\
&& \displaystyle + (\nu + \nu_r) \int_0^T \int_{\Omega \times Y} \sum_{i=1}^2
\left(
\left( - y_2 \frac{\partial h}{\partial \eta_1} \frac{\partial v_i^1}{\partial
\eta_2} \right)
(\bar b \cdot \nabla \varphi_i) + \frac{1}{h} \frac{ \partial v_i^1}{\partial
\eta_2} \frac{\partial \varphi_i}{\partial y_2} \right) \theta \, d \eta  dy dt
\\
&& \displaystyle + \alpha \int_0^T \int_{\Omega \times Y}  \left( \left( - y_2
\frac{\partial h}{\partial \eta_1} \frac{\partial Z^1}{\partial \eta_2} \right)
(\bar b \cdot \nabla \psi) + \frac{1}{h} \frac{ \partial Z^1}{\partial \eta_2}
\frac{\partial \psi}{\partial y_2} \right) \theta \, d \eta  dy dt .
\end{eqnarray*}
But these last two integral terms vanish since $\varphi$, $\psi$ and $h$ do not
depend on $\eta_2$ and $v^1$ and $Z^1$ are $\eta_2$-periodic. Hence we obtain
\begin{eqnarray*}
\e \int_0^T   a \bigl( \bar v^{\e} (t) , \Theta^{\e} \bigr) \theta (t) \, dt
\to \int_0^T \bar a \bigl( \bar v^0 (t), \Theta \bigr) \theta (t) \, d t
\end{eqnarray*}
with $\bar v^0 = (v^0, Z^0)$ and
\begin{eqnarray*}
&& \displaystyle \bar a ( \bar v, \Theta) = ( \nu + \nu_r)  \int_{\Omega \times
(0,1) } \sum_{i=1}^2 \left(
h (\bar b \cdot \nabla v_i) (\bar b \cdot \nabla \varphi_i) + \frac{1}{h}
\frac{\partial v_i}{\partial y_2} \frac{\partial \varphi_i}{\partial y_2}
\right) \, d\eta_1 d y  \\
&& \displaystyle + \alpha  \int_{\Omega \times (0,1)} \left(
h (\bar b \cdot \nabla Z) (\bar b \cdot \nabla \psi) + \frac{1}{h}
\frac{\partial Z}{\partial y_2}
 \frac{\partial \psi}{\partial y_2} \right) \, d\eta_1 d y
 \end{eqnarray*}
 for all $\bar v = (v,Z) \in V_{div} \times H^1_{0 {\sharp}}$, for all $\Theta =
(\varphi, \psi) \in  V_{div} \times H^1_{0 {\sharp}}$.

 \smallskip

 From (\ref{trifb}) and the estimates (\ref{E3.13})-(\ref{E3.14})-(\ref{E3.16})
obtained at Proposition \ref{pro1} we get
 \begin{eqnarray*}
\e  \int_0^T  B \bigl(\bar v^{\e} (t), \bar v^{\e} (t) , \Theta^{\e} \bigr) \theta
(t) \, dt  = {\mathcal O}(\e) \to 0
  \end{eqnarray*}
  and similarly, from (\ref{Rot}) and (\ref{E3.13})-(\ref{E3.14})-(\ref{E3.16})
 \begin{eqnarray*}
\e  \displaystyle \int_0^T  {\mathcal R} \bigl(\bar v^{\e} (t), \bar v^{\e} (t) ,
\Theta^{\e} \bigr) \theta (t) \, dt
  = {\mathcal O}(\e) \to 0.
  \end{eqnarray*}


 Let us consider now the right hand side of equation (\ref{eqn:er2.14a}). We
recall that $\bar \xi^{\e} = (U^{\e} e_1, W^{\e} )$ with
 \begin{eqnarray*}
&& \displaystyle  U^{\e} (t,z) = U_0 (t) {\mathcal U} \left( h^{\e}(y_1) y_2
\right) =  \bar U \left( t, y_1, y_2, \frac{y_1}{\e} \right) \\
&& \displaystyle W^{\e}(t,z) = W_0 (t) {\mathcal W} \left( h^{\e}(y_1) y_2
\right) =  \bar W \left( t, y_1, y_2, \frac{y_1}{\e} \right)
\end{eqnarray*}
and $U_0$, $W_0$ belong to $H^1 (0,T)$, ${\mathcal U}$, ${\mathcal W}$ belong to
${\mathcal D} \bigl( (-\infty, h_{max}) \bigr)$.
Hence $\bar U$ and $\bar W$ belong to ${\mathcal C} \bigl( [0,T];  {\mathcal
C}^{1} (\overline{ \Omega}; {\mathcal C}^{1}_{\sharp} (0,1) \bigr) \bigr)$ and
with (\ref{For3.10})-(\ref{Rot1})-(\ref{Rott})-(\ref{Rotxi})
\begin{eqnarray*}
&& \displaystyle \e \int_0^T  a \bigl( \bar \xi^{\e} (t), \Theta^{\e} \bigr) \theta
(t) \, dt  \to \int_0^T \bar a \bigl( \bar \xi (t), \Theta) \theta (t) \, dt \\
&& \displaystyle \e \int_0^T  B \bigl( \bar \xi^{\e} (t), \bar v^{\e} (t) ,
\Theta^{\e} \bigr) \theta (t) \, dt = {\mathcal O} (\e) \to 0  \\
&& \displaystyle \e \int_0^T   B \bigl( \bar v^{\e} (t) , \bar \xi^{\e} (t),
\Theta^{\e} \bigr) \theta (t) \, dt = {\mathcal O} (\e) \to 0  \\
&& \displaystyle \e \int_0^T  {\mathcal R} \bigl( \bar \xi^{\e} (t), \Theta^{\e}
\bigr) \theta (t) \, dt = {\mathcal O} (\e) \to 0
\end{eqnarray*}
with $\bar \xi = ( \bar U e_1, \bar W)$.

Next, using (\ref{pression}) and reminding that $\varphi^{\e} \in \tilde
V^{\e}$:
\begin{eqnarray*}
&& \displaystyle \e \int_0^T \int_{\Omega^{\e}}  p^{\e} (t,z)  {\rm div}_z
\varphi^{\e} (z) \theta (t) \, dz dt
= \int_0^T \int_{\Omega} \e p^{\e} \left( (\e b_{\e} \cdot \nabla
\varphi_1^{\e}) + \frac{1}{h^{\e}} \frac{\partial \varphi_2^{\e}}{\partial y_2}
\right) h^{\e} \theta \, dy dt \\
&& \displaystyle =  \int_0^T \int_{\Omega} \e p^{\e}
 \left(
\e h \left( y_1, \frac{y_1}{\e} \right)  \frac{\partial \varphi_1}{\partial y_1}
\left( y, \frac{y}{\e} \right)
+ h \left( y_1, \frac{y_1}{\e} \right)  \frac{\partial \varphi_1}{\partial
\eta_1} \left( y, \frac{y}{\e} \right)  \right.\\
&& \displaystyle   - y_2 \left(
\e \frac{\partial h}{\partial y_1} \left( y_1, \frac{y_1}{\e} \right) +
\frac{\partial h}{\partial \eta_1} \left( y_1, \frac{y_1}{\e} \right)
\right)
\frac{\partial \varphi_1}{\partial y_2} \left( y, \frac{y}{\e} \right)
+  \frac{\partial \varphi_2}{\partial y_2} \left( y, \frac{y}{\e} \right) \\
&& \displaystyle  \left. + \e y_2  \frac{\partial h}{\partial y_1} \left( y_1,
\frac{y_1}{\e} \right)  \frac{\partial \varphi_1}{\partial y_2} \left( y,
\frac{y}{\e} \right) + \e  \frac{\partial h}{\partial y_1} \left( y_1,
\frac{y_1}{\e} \right)  \varphi_1 \left( y, \frac{y}{\e} \right)
\right) \theta (t) \, dy dt \\
&& \displaystyle = \int_0^T \int_{\Omega} \e^2 p^{\e}
 \left(
h \left( y_1, \frac{y_1}{\e} \right)  \frac{\partial \varphi_1}{\partial y_1}
\left( y, \frac{y}{\e} \right)
+  \frac{\partial h}{\partial y_1} \left( y_1, \frac{y_1}{\e} \right)  \varphi_1
\left( y, \frac{y}{\e} \right)
\right) \theta (t) \, dy dt \\
&& \displaystyle \to  \int_0^T \int_{\Omega \times (0,1)} p^0 \frac{ \partial ( h
\varphi_1)}{\partial y_1} \theta \, d \eta_1 dy dt
= - \int_0^T \int_{\Omega \times (0,1)} \frac{\partial p^0}{\partial y_1} h
\varphi_1 \theta \, d \eta_1 dy dt.
 \end{eqnarray*}
 Finally
 \begin{eqnarray*}
&& \displaystyle \e^2  \int_0^T  \frac{d}{dt} \bigl[ \bar v^{\e} , \Theta^{\e}
\bigr] (t) \theta (t) \, dt =
- \e^2 \int_0^T   \bigl[ \bar v^{\e} , \Theta^{\e} \bigr] (t) \theta' (t) \, dt =
{\mathcal O} (\e^2) \to 0 \\
&&\displaystyle - \e^2 \int_0^T \left[ \frac{\partial \bar \xi^{\e}}{\partial t},
\Theta^{\e} \right] (t) \theta (t) \, dt
= {\mathcal O} (\e^2) \to 0.
\end{eqnarray*}

By multiplying equation (\ref{eqn:er2.14a}) by $\e \theta (t)$, integrating over
$[0,T]$ and passing to the limit as $\e $ tends to zero we obtain
\begin{eqnarray*}
&& \displaystyle
\int_0^T \bar a \bigl( \bar v^0 (t), \Theta) \theta (t) \, dt - \int_0^T
\int_{\Omega \times (0,1)} \frac{\partial p^0}{\partial y_1} h \varphi_1 \theta
\, d \eta_1 dy dt \\
 && \displaystyle
= - \int_0^T \bar a \bigl( \bar \xi (t), \Theta) \theta (t) \, dt + \int_0^T
\int_{\Om \times (0,1)} (f \varphi + g \psi) h \theta \,  d \eta_1 dy dt
\end{eqnarray*}
for all $\Theta = (\varphi, \psi) \in \tilde V_{div} \times \tilde H^1$ and
$\theta \in {\mathcal D} (0,T)$. By density of $\tilde V_{div} \times \tilde
H^1$ into $V_{div} \times H^1_{0 {\sharp}}$ we get the announced result.
\end{proof}

 We may observe that the limit problem is totally decoupled with respect to the
velocity and micro-rotation fields. Furthermore the time variable appears as a
parameter in the limit problem. More precisely,
 for all $y_1 \in [0,L] $,  let $a_{y_1}$ be the bilinear symmetric form defined
on
 ${\mathcal F}$ by
 \begin{eqnarray*}
 a_{y_1} (w, \psi) = \int_Y \left(  h(y_1, \eta_1) ( \bar b \cdot \nabla w)(
y_2, \eta_1) (\bar b \cdot \nabla \psi)(y_2,  \eta_1) + \frac{1}{h(y_1, \eta_1)}
\frac{\partial w}{\partial y_2} (y_2, \eta_1)  \frac{\partial \psi}{\partial
y_2} (y_2, \eta_1) \right) \, d \eta_1 dy_2
 \end{eqnarray*}
 for all $(w, \psi) \in {\mathcal F}$.
  The limit velocity, pressure and micro-rotation fields are solution of the
problems $(P_{v^0,p^0})$ and $(P_{Z^0})$ given respectively by
 \begin{eqnarray*}
&& \displaystyle \hbox{\rm Find } v^0 \in  L^2( 0,T; V_{div}) \mbox{ and } p^0 \in
H^{-1} (0,T;  H^1_{\sharp} (0,L) ) \mbox{ such that } \\
&& \displaystyle \displaystyle \int_0^L p^0 (t, y_1) \left( \int_0^1
h ( y_1, \eta_1) \, d \eta_1 \right) \, dy_1 =0 \mbox{ a.e. } t \in [0,T] \mbox{ and } \\
&& \displaystyle  ( \nu + \nu_r) \int_0^L \sum_{i=1}^2 a_{y_1} (v^0_i,
\varphi_i) \, dy_1
-  \int_0^L \frac{\partial p^0}{\partial y_1} \left( \int_0^1 h(y_1, \cdot )
\varphi_1 \, d \eta_1 \right) \, d y_1  \\
&& \displaystyle =- (\nu + \nu_r)  \int_0^L  a_{y_1} \bigl(\bar U (t) ,
\varphi_1\bigr) \, dy_1
+ \int_0^L \left( \int_Y f(t, y_1, \cdot, \cdot) h( y_1, \cdot) \varphi \, d
\eta_1 d y_2 \right) d y_1 \\
 && \displaystyle  \forall \varphi \in V_{div} , \ {\rm a.e.} \  t \in [0,T]
\end{eqnarray*}
and
  \begin{eqnarray*}
&& \displaystyle \hbox{\rm Find } Z^0 \in L^2 (0,T; H^1_{0, {\sharp}} ) \mbox{ such
that } \\
&& \displaystyle \alpha  \int_0^L  a_{y_1} (Z^0, \psi) \, dy_1
 =-  \alpha \int_0^L  a_{y_1} \bigl(\bar W (t) , \psi \bigr) \, dy_1
+ \int_0^L \left( \int_Y g(t, y_1, \cdot, \cdot) h( y_1, \cdot) \psi \, d \eta_1
d y_2 \right) d y_1 \\
&& \displaystyle \forall \psi \in  H^1_{0, {\sharp}}, \ {\rm a.e.} \  t \in
[0,T].
\end{eqnarray*}


\begin{proposition} \label{prop5.1}
Under the assumptions  of theorem \ref{limit_pb}, the limit micro-rotation
field $Z^0$ is uniquely  given by
\begin{eqnarray*}
Z^0 (t, y_1, y_2, \eta_1) =  W_0 (t) z^1_{ y_1} (y_2, \eta_1) + z^2_{ t, y_1}
(y_2, \eta_1) \quad \hbox{\rm a.e. in } (0,T) \times \Omega \times (0,1)
\end{eqnarray*}
where $z^1_{y_1} \in H^1_{0, \sharp} $ and $z^2_{t, y_1} \in H^1_{0, \sharp} $
are the unique solutions of the following auxiliary  problems:
  \begin{eqnarray*}
   a_{y_1} (z^1_{ y_1}, \psi) = - a_{y_1} \bigl( {\mathcal W} (y_1, \cdot), \psi
\bigr)
\quad \forall \psi \in H^1_{0, \sharp}
\end{eqnarray*}
and
  \begin{eqnarray*}
  \alpha  a_{y_1} (z^2_{t, y_1}, \psi) =
 \int_Y g_{t, y_1} h( y_1, \cdot) \psi \, d \eta_1 d y_2
\quad \forall \psi \in H^1_{0, \sharp} .
\end{eqnarray*}
\end{proposition}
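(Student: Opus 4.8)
The plan is to construct $Z^0$ by hand, solving the two cell problems for each frozen value of the parameters, and then to identify the result with the (unique) solution of problem $(P_{Z^0})$ obtained after Theorem~\ref{limit_pb}. The first step is to check that, for every $y_1 \in [0,L]$, the bilinear form $a_{y_1}$ is continuous and coercive on the fibre space $H^1_{0,\sharp}$, i.e. on the space of functions $w = w(y_2,\eta_1)$ with $w$, $\partial_{y_2} w$ in $L^2(Y)$, $w$ $1$-periodic in $\eta_1$ and $w=0$ at $y_2 \in \{0,1\}$. Continuity follows at once from $h_m \le h(y_1,\eta_1) \le h_M$ and the boundedness of $\partial h / \partial \eta_1$ on $[0,L] \times [0,1]$. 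For coercivity one writes
\begin{eqnarray*}
a_{y_1}(w,w) \ge \min\left\{ h_m, \frac{1}{h_M} \right\} \left( \| \bar b \cdot \nabla w \|_{L^2(Y)}^2 + \left\| \frac{\partial w}{\partial y_2} \right\|_{L^2(Y)}^2 \right) ;
\end{eqnarray*}
since $w$ vanishes at $y_2 \in \{0,1\}$, Poincar\'e's inequality in the $y_2$ variable gives $\| w \|_{L^2(Y)} \le \| \partial_{y_2} w \|_{L^2(Y)}$, and the identity $\partial_{\eta_1} w = \bar b \cdot \nabla w + \frac{y_2}{h} \frac{\partial h}{\partial \eta_1}\, \partial_{y_2} w$ bounds $\| \partial_{\eta_1} w \|_{L^2(Y)}$ by $\| \bar b \cdot \nabla w \|_{L^2(Y)}$ and $\| \partial_{y_2} w \|_{L^2(Y)}$. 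Hence there is $c_0 > 0$, independent of $y_1$, such that $a_{y_1}(w,w) \ge c_0 \| w \|_{\mathcal F}^2$.

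By the Lax--Milgram theorem the two auxiliary problems then admit unique solutions $z^1_{y_1}$ and $z^2_{t,y_1}$ in $H^1_{0,\sharp}$: the linear form $\psi \mapsto - a_{y_1}\bigl( {\mathcal W}(y_1,\cdot), \psi \bigr)$ is continuous on $H^1_{0,\sharp}$ because ${\mathcal W}(y_1,\cdot) : (y_2,\eta_1) \mapsto {\mathcal W}(h(y_1,\eta_1)y_2)$ is smooth, hence belongs to $\mathcal F$, while $\psi \mapsto \int_Y g_{t,y_1} h(y_1,\cdot)\psi \, d\eta_1 dy_2$ is continuous even on $L^2(Y)$ since $g \in {\mathcal C}\bigl([0,T];{\mathcal C}(\overline{\Omega};{\mathcal C}_\sharp(0,1))\bigr)$ is bounded. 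The Lax--Milgram bounds are uniform in $(t,y_1)$ because $h_m \le h \le h_M$; together with the smoothness of $\mathcal W$, the ${\mathcal C}^\infty$ dependence of $h$ on $y_1$ and the continuity of $g$ in $(t,y_1)$, a standard continuous-dependence argument shows that $y_1 \mapsto z^1_{y_1}$ and $(t,y_1) \mapsto z^2_{t,y_1}$ are continuous, hence measurable, with values in $\mathcal F$. Since $W_0 \in H^1(0,T) \hookrightarrow L^\infty(0,T)$, the function
\begin{eqnarray*}
\tilde Z (t, y_1, y_2, \eta_1) = W_0(t)\, z^1_{y_1}(y_2, \eta_1) + z^2_{t,y_1}(y_2, \eta_1)
\end{eqnarray*}
belongs to $L^2(0,T; H^1_{0,\sharp})$, and by construction $\tilde Z = 0$ on $(\Gamma_0 \cup \Gamma_1) \times (0,1) \times (0,T)$, in accordance with Proposition~\ref{prop4.2}.

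It remains to verify that $\tilde Z$ solves $(P_{Z^0})$ and that this problem has at most one solution. Fixing $\psi \in \tilde H^1$, adding the two cell identities after multiplying the first by $\alpha W_0(t)$ and recalling that $\bar W(t,y_1,y_2,\eta_1) = W_0(t){\mathcal W}(h(y_1,\eta_1)y_2)$ gives
\begin{eqnarray*}
\alpha\, a_{y_1}\bigl( \tilde Z(t), \psi \bigr) = - \alpha\, a_{y_1}\bigl( \bar W(t), \psi \bigr) + \int_Y g_{t,y_1} h(y_1,\cdot)\psi \, d\eta_1 dy_2 ,
\end{eqnarray*}
and integrating over $y_1 \in [0,L]$ yields precisely the variational identity defining $(P_{Z^0})$; by density of $\tilde H^1$ in $H^1_{0,\sharp}$ it holds for every admissible test function. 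For uniqueness, if $Z$ and $\tilde Z$ both solve $(P_{Z^0})$ their difference $w$ satisfies $\int_0^L a_{y_1}(w(t), \psi)\, dy_1 = 0$ for all $\psi \in H^1_{0,\sharp}$ and a.e. $t$; the choice $\psi = w(t) \in H^1_{0,\sharp}$ forces $\int_0^L a_{y_1}(w(t), w(t))\, dy_1 = 0$, so by the uniform coercivity $a_{y_1}(w(t), w(t)) = 0$ for a.e. $y_1$, hence $w = 0$. Therefore $Z^0 = \tilde Z$, which is the claimed formula.

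The main obstacle is the uniform well-posedness of the cell problems, that is, the coercivity of $a_{y_1}$ on $H^1_{0,\sharp}$, which hinges on the Poincar\'e inequality in the $y_2$ direction and on recovering $\partial_{\eta_1} w$ from $\bar b \cdot \nabla w$ and $\partial_{y_2} w$. Once this is in place, the measurable dependence of $z^1_{y_1}$ and $z^2_{t,y_1}$ on the parameters — needed to place $\tilde Z$ in $L^2(0,T; H^1_{0,\sharp})$ — and the identification with $Z^0$ are routine.
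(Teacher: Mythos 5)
Your proof is correct and follows essentially the same route as the paper: uniform coercivity of $a_{y_1}$ on $H^1_{0,\sharp}$, Lax--Milgram for the two cell problems, continuous dependence on $(t,y_1)$, integration in $y_1$, and density of $\tilde H^1$, with uniqueness from the uniform coercivity. The only (harmless) difference is in the coercivity step: you bound $\|\partial_{\eta_1} w\|_{L^2(Y)}$ from above using $\partial_{\eta_1} w = \bar b\cdot\nabla w + \frac{y_2}{h}\partial_{\eta_1}h\,\partial_{y_2}w$, whereas the paper expands $\|\bar b\cdot\nabla w\|^2_{L^2(Y)}$ from below via a Young parameter $\lambda$ — the two arguments are equivalent, and yours is slightly more direct.
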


\begin{proof}
 It is clear that, for all $y_1 \in [0,L] $, the mapping  $a_{y_1}$  is
continuous on ${\mathcal F}$.
  Moreover
 \begin{eqnarray*}
  a_{y_1} (w,  w) \ge h_{min} \| \bar b \cdot \nabla w \|^2_{L^2(Y)} +
\frac{1}{h_{max}} \left\| \frac{\partial w}{\partial z_2} \right\|_{L^2(Y)}
  \end{eqnarray*}
  and
  \begin{eqnarray*}
&& \displaystyle   \| \bar b \cdot \nabla w \|^2_{L^2(Y)} = \left\|
\frac{\partial w}{\partial \eta_1} \right\|^2_{L^2(Y)} + \int_Y
\frac{y_2^2}{h^2(y_1, \eta_1)} \left( \frac{\partial h}{\partial \eta_1} (y_1,
\eta_1, t) \right)^2 \left( \frac{\partial w}{\partial y_2} \right)^2 \, d
\eta_1 dy_2 \\
&& \displaystyle  - 2 \int_Y \frac{y_2}{h(y_1, \eta_1)}  \frac{\partial
h}{\partial \eta_1} (y_1, \eta_1)   \frac{\partial w}{\partial y_2}
\frac{\partial w}{\partial \eta_1} \, d \eta_1 dy_2 \\
&& \displaystyle   \ge (1 - \lambda) \left\| \frac{\partial w}{\partial \eta_1}
\right\|^2_{L^2(Y)} + \left( 1- \frac{1}{\lambda} \right)
\int_Y \frac{y_2^2}{h^2(y_1, \eta_1)} \left( \frac{\partial h}{\partial \eta_1}
(y_1, \eta_1) \right)^2 \left( \frac{\partial w}{\partial y_2} \right)^2 \, d
\eta_1 dy_2  \quad \forall \lambda >0.
\end{eqnarray*}
But, recalling that $h \in {\mathcal C} \bigl( [0,L] \times [0,1] \bigr)$, there
exists $C>0$, independent of $y_1$, such that
\begin{eqnarray*}
\left|  \frac{y_2}{h(y_1, \eta_1)}  \frac{\partial h}{\partial \eta_1} (y_1,
\eta_1)  \right| \le C \quad \forall (y_1, y_2, \eta_1) \in [0,L] \times Y
\end{eqnarray*}
and, for all $\lambda \in (0,1)$
\begin{eqnarray} \label{coer_1}
a_{y_1} (w,w) \ge C_{1}(\lambda)\left\| \frac{\partial
w}{\partial
\eta_1} \right\|^2_{L^2(Y)} + C_{2}(\lambda)
\left\| \frac{\partial w}{\partial y_2} \right\|^2_{L^2(Y)},
  \end{eqnarray}
where $C_{1}(\lambda) =  h_{min} (1 - \lambda)$ and $C_{2}(\lambda)
=\left(\left(1-\frac{1}{\lambda}\right) C^2 h_{min} + \frac{1}{h_{max}}
\right)$. Then we may choose $\lambda$ such that
 \begin{eqnarray} \label{coer_2}
 \lambda \in \left( \frac{C^2 h_{max} h_{min}}{1+ C^2 h_{max} h_{min}}, 1
\right)
  \end{eqnarray}
  which shows that $a_{y_1}$ is coercive on $H^1_{0, \sharp}$,
  uniformly with respect to $y_1$.  Since $g \in {\mathcal C} \bigl( [0,T];
{\mathcal C} \bigl( \overline{\Om}; {\mathcal C}_{\sharp} (0,1) \bigr) \bigr)$
 the mapping $g_{t,y_1} = g(t, y_1, \cdot, \cdot) $ belongs to $L^2(Y)$ for all
$(t, y_1) \in [0,T] \times [0,L]$.
Then Lax-Milgram's theorem implies that, for all $(t, y_1)  \in  [0,T] \times
[0,L] $ the problems
  \begin{eqnarray*}
 && \displaystyle  \hbox{\rm Find } z^1_{y_1} \in H^1_{0, \sharp}  \mbox{ such that}
\\
&& \displaystyle a_{y_1} (z^1_{ y_1}, \psi) = - a_{y_1} \bigl( {\mathcal W}
(y_1, \cdot) , \psi \bigr)
\quad \forall \psi \in H^1_{0, \sharp}
\end{eqnarray*}
and
  \begin{eqnarray*}
 && \displaystyle  \hbox{\rm Find } z^2_{t, y_1} \in H^1_{0, \sharp} \mbox{ such that}
\\
&& \displaystyle \alpha  a_{y_1} (z^2_{t, y_1}, \psi) =
 \int_Y g_{t, y_1} h( y_1, \cdot) \psi \, d \eta_1 d y_2
\quad \forall \psi \in H^1_{0, \sharp}
\end{eqnarray*}
admit a unique solution. Furthermore, recalling that $W_0 \in H^1(0,T) \subset
{\mathcal C} \bigl( [0,T] \bigr)$ and $h \in {\mathcal C}^1 \bigl( [0,L] \times
[0,1] ; \br \bigr) $ with values in $[h_{min}, h_{max} ] \subset \br^+_*$, we
infer that the mapping $(t, y_1 ) \mapsto Z^0_{t, y_1} = W_0 z^1_{ y_1} + z^2_{
t, y_1}$ is continuous on $[0,T] \times [0,L]$
 with values in $H^1_{0, \sharp}$ and is $L$-periodic in $y_1$.

 Thus the mapping $Z^0: (t, y_1, y_2, \eta_1) \mapsto  Z^0_{t, y_1} (y_2,
\eta_1)$ belongs to $L^2 (0,T ; H^1_{ 0, {\sharp}} \bigr)$ and
solves the problem $(P_{Z_0})$. Indeed, let $\psi \in \tilde H^1$. Then $\psi (
y_1, \cdot, \cdot) \in  H^1_{\sharp} $ and we get
\begin{eqnarray*}
&& \displaystyle \alpha a_{y_1} \bigl(Z^0_{t, y_1}, \psi ( y_1, \cdot, \cdot)
\bigr) = -  \alpha a_{y_1} \bigl( \bar W(t, y_1, \cdot, \cdot)  , \psi ( y_1,
\cdot, \cdot) \bigr)  \\
&& \displaystyle + \int_Y g (t, y_1, \cdot, \cdot)  h( y_1, \cdot) \psi \, d
\eta_1 d y_2
\quad \forall y_1 \in [0,L] .
\end{eqnarray*}
Both sides of this equality are continuous on $[0,L]$, hence we may integrate
with respect to $y_1$ and
\begin{eqnarray*}
\int_0^L  a_{y_1} (Z^0_{t, y_1}, \psi) \, dy_1 = - \int_0^L a_{y_1} \bigl( \bar
W , \psi \bigr) \, dy_1
+ \int_0^L \left( \int_Y g_{t, y_1} h( y_1, \cdot) \psi \, d \eta_1 d y_2
\right)  d y_1
\quad \forall \psi \in \tilde H^1.
\end{eqnarray*}
It follows that
\begin{eqnarray*}
&& \displaystyle \int_0^L  a_{y_1} (Z^0 (t), \psi) \, dy_1 = - \int_0^L a_{y_1}
\bigl( \bar W (t) , \psi \bigr) \, dy_1
+ \int_0^L \left( \int_Y g (t)  h( y_1, \cdot) \psi \, d \eta_1 d y_2 \right)  d
y_1 \\
&& \displaystyle \forall \psi \in \tilde H^1, \ \hbox{\rm a.e.} \ t \in [0,T]
\end{eqnarray*}
 and the density of $\tilde H^1$ into $H^1_{ 0, {\sharp}} $ allows us to
conclude the existence part of the proof.
Then we observe that  the uniqueness is a immediate consequence of the uniform
coercivity of $a_{y_1}$ with respect to $y_1$.
\end{proof}

Now, for all $y_1 \in [0,L] $,  let
\begin{eqnarray*}
\tilde V_{y_1} =  \left\{ \varphi \in \bigl( {\mathcal C}^{\infty} \bigl([0,1];
{\mathcal C}^{\infty}_{\sharp} (0,1) \bigr) \bigr)^2; \ \varphi (0, \cdot)= 0 \
\hbox{\rm on } (0,1), \ - \varphi_1 (1, \cdot) \frac{\partial h}{\partial y_1}
(y_1, \cdot) + \varphi_2 (1, \cdot) =0 \ \hbox{\rm on } (0,1) \right\},
\end{eqnarray*}
\begin{eqnarray*}
\tilde V_{y_1, div} = \left\{ \varphi \in \tilde V_{y_1}; \ h(y_1, \cdot)
\frac{\partial \varphi_1}{\partial \eta_1} - y_2 \frac{\partial h  }{\partial
\eta_1} (y_1, \cdot) \frac{\partial \varphi_1}{\partial y_2} + \frac{\partial
\varphi_2}{\partial y_2} =0 \ \hbox{\rm in } Y \right\}
\end{eqnarray*}
and
 \begin{eqnarray*}
V_{y_1, div} = \hbox{\rm closure of } \tilde V_{y_1, div} \mbox{ in } {\mathcal F}^{2}.
\end{eqnarray*}
Let $\displaystyle \bar a_{y_1} (w, \varphi) = (\nu + \nu_r) \sum_{i=1}^2
a_{y_1} (w_i, \varphi_i)$ for all $(w, \varphi) \in V_{y_1, div}^2$.
With Poincar\'e's inequality we know that $w \mapsto \| \nabla w\|_{L^2 (Y)}$
defines a norm on $V_{y_1, div}$ which is equivalent to the $H^1$-norm.
Furthermore, with (\ref{coer_1})-(\ref{coer_2}), we may infer that $\bar
a_{y_1}$ is coercive on $V_{y_1, div}$ for all $y_1 \in [0,L] $, uniformly with
respect to $y_1$. It follows that we can define $w_{y_1}^1 \in V_{y_1, div}$,
$w_{y_1}^2 \in V_{y_1, div}$ and $w_{t,y_1}^3 \in V_{y_1, div}$ as the unique
solutions of
\begin{eqnarray*}
 \bar a_{y_1} (w^1_{y_1}, \varphi) = - \int_Y h (y_1, \cdot ) \varphi_1 \, d
\eta_1 d y_2 \quad \forall \varphi \in V_{y_1, div},
\end{eqnarray*}
\begin{eqnarray*}
\bar a_{y_1} (w^2_{y_1}, \varphi)  = - (\nu + \nu_r)  a_{y_1} \bigl({\mathcal U}
( y_1, \cdot ) , \varphi_1 \bigr) \quad \forall \varphi \in V_{y_1, div}
\end{eqnarray*}
and
\begin{eqnarray*}
\bar a_{y_1} (w^3_{t,y_1}, \varphi)  = \int_Y  f_{t,y_1} h (y_1, \cdot )
\varphi_1 \, d \eta_1 d y_2 \quad \forall \varphi \in V_{y_1, div}
\end{eqnarray*}
with $f_{t,y_1} = f(t, y_1, \cdot, \cdot)$ for all $(t,y_1) \in [0,T] \times
[0,L]$.

Then we have

\begin{proposition}\label{prop5.2}
Under the assumptions  of theorem \ref{limit_pb},  the limit velocity $v^0$ is
uniquely given by
\begin{eqnarray*}
v^0 (t, y_1, y_2, \eta_1) = \frac{\partial p^0}{\partial y_1} (t, y_1) w^1_{y_1}
 (y_2, \eta_1) +  U_0 (t) w^2_{ y_1} (y_2, \eta_1) + w^3_{t,y_1} (y_2, \eta_1)
\quad \hbox{\rm a.e. in  } (0,T) \times \Omega\times (0,1).
\end{eqnarray*}
Furthermore, for almost every  $t \in [0,T]$, the limit pressure $p^0 (t, \cdot)
$ is the unique solution in $H^1_{\sharp} (0,L)_{| \br}$   of the following
homogenized Reynolds equation
\begin{eqnarray*}
\int_0^L \frac{\partial p^0}{\partial y_1} \frac{\partial \psi}{\partial y_1}
\bar a (w^1_{y_1} , w^1_{y_1} ) \, dy_1 = - \int_0^L U_0 (t) \frac{\partial
\psi}{\partial y_1} \bar a \bigl(w^1_{y_1} ,  w^2_{ y_1} \bigr) \, dy_1
- \int_0^L  \frac{\partial \psi}{\partial y_1} \bar a \bigl(w^1_{y_1} ,  w^3_{
t, y_1} \bigr) \, dy_1
 \quad \forall \psi \in H^1_{\sharp} (0,L)
\end{eqnarray*}
satisfying $\displaystyle \int_0^L p^0  \left( \int_0^1 h( \cdot , \eta_1) \, d
\eta_1 \right) \, dy_1 =0$.
\end{proposition}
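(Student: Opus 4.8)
The plan is to exploit that the limit problem $(P_{v^0,p^0})$ is already decoupled in time and that, for almost every fixed $t$, it decouples further over the slices $\{y_1\}\times Y$. First I would fix $t$ and note that the bilinear form $\bar a$, the incompressibility constraint entering the definition of $V_{div}$, and the pressure contribution all involve only $\eta_1$- and $y_2$-derivatives, while by Proposition \ref{prop4.3} the limit pressure depends only on $(t,y_1)$. Identifying $V_{div}$ with the space of $L^2$-sections $y_1\mapsto\varphi(y_1,\cdot)\in V_{y_1, div}$ and localising in $y_1$ through a measurable-selection / direct-integral argument, one obtains that for almost every $y_1$ the trace $v^0(t,y_1,\cdot,\cdot)$ lies in $V_{y_1, div}$ and solves
\begin{eqnarray*}
\bar a_{y_1}\bigl(v^0(t,y_1,\cdot,\cdot),\varphi\bigr) &=& \frac{\partial p^0}{\partial y_1}(t,y_1)\int_Y h(y_1,\cdot)\,\varphi_1\,d\eta_1 dy_2 - (\nu+\nu_r)\,U_0(t)\,a_{y_1}\bigl({\mathcal U}(y_1,\cdot),\varphi_1\bigr) \\
&& {}+ \int_Y f_{t,y_1}\,h(y_1,\cdot)\,\varphi\,d\eta_1 dy_2 \qquad \forall\,\varphi\in V_{y_1, div}.
\end{eqnarray*}

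The right-hand side is an affine combination of $\frac{\partial p^0}{\partial y_1}(t,y_1)$, $U_0(t)$ and $f_{t,y_1}$ whose coefficients are exactly the data of the auxiliary problems defining $w^1_{y_1}$, $w^2_{y_1}$ and $w^3_{t,y_1}$. Since, by (\ref{coer_1})--(\ref{coer_2}) together with Poincar\'e's inequality, $\bar a_{y_1}$ is coercive on $V_{y_1, div}$ with a constant independent of $y_1$, Lax--Milgram yields uniqueness of the slice solution, and linearity then forces $v^0(t,y_1,\cdot,\cdot)=\frac{\partial p^0}{\partial y_1}(t,y_1)\,w^1_{y_1}+U_0(t)\,w^2_{y_1}+w^3_{t,y_1}$, which is the first assertion. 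Two verifications are needed: that $y_1\mapsto w^i_{y_1}$ is continuous into ${\mathcal F}$ (from the continuity of $h$ and the uniform coercivity) and $L$-periodic, so that this combination genuinely belongs to $L^2(0,T;V_{div})$; and that, $v^0$ being in $L^2(0,T;V_{div})$ while $w^1_{y_1}$ is $t$-independent, the relation above forces $\frac{\partial p^0}{\partial y_1}\in L^2(0,T;L^2(0,L))$, which upgrades the $H^{-1}$-in-time regularity of $p^0$ from Proposition \ref{prop4.3} and justifies $p^0(t,\cdot)\in H^1_{\sharp}(0,L)$ for a.e. $t$.

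It remains to pin down $p^0$. The subtle point is that $(P_{v^0,p^0})$ alone does not do so --- the slice computation is consistent with any $p^0$ --- and the missing information is the macroscopic incompressibility, which I would extract by a further passage to the two-scale limit in the original constraint ${\rm div}_z v^{\e}=0$, tested against functions with nontrivial $y_2$-dependence but carrying only the slow variable $y_1$ (in the spirit of the test functions (\ref{eq:rajout1}) used in the proof of Proposition \ref{prop4.3}) and simplified by means of (\ref{eq:sec4-3}) and the boundary conditions (\ref{eq:sec4-4}). This shows that the weighted flux $Q(t,y_1):=\int_Y h(y_1,\eta_1)\,v^0_1(t,y_1,y_2,\eta_1)\,d\eta_1 dy_2$ is independent of $y_1$, i.e. $\int_0^L\psi'(y_1)\,Q(t,y_1)\,dy_1=0$ for all $\psi\in H^1_{\sharp}(0,L)$ and a.e. $t$. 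Inserting the formula for $v^0$ and rewriting $\int_Y h\,(w^j_{y_1})_1\,d\eta_1 dy_2=\bar a_{y_1}(w^1_{y_1},w^j_{y_1})$ by means of the identity defining $w^1_{y_1}$, this becomes exactly the homogenized Reynolds equation of the statement. Its well-posedness in $H^1_{\sharp}(0,L)_{|\br}$ follows from Lax--Milgram once $(\phi,\psi)\mapsto\int_0^L\phi'\psi'\,\bar a_{y_1}(w^1_{y_1},w^1_{y_1})\,dy_1$ is seen to be continuous and, on the quotient by constants, coercive (Poincar\'e--Wirtinger), which reduces to the uniform lower bound $\bar a_{y_1}(w^1_{y_1},w^1_{y_1})\ge c>0$; this positivity holds since $w^1_{y_1}\neq0$ --- otherwise $\int_Y h(y_1,\cdot)\varphi_1\,d\eta_1 dy_2=0$ for every $\varphi\in V_{y_1, div}$, which is false --- and is uniform by continuity in $y_1$ together with compactness of the periodic interval. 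Finally the normalisation $\int_0^L p^0\bigl(\int_0^1 h\,d\eta_1\bigr)dy_1=0$, already proved in Proposition \ref{prop4.3}, fixes the additive constant left free by the Reynolds equation, giving uniqueness of $p^0$ and hence of $v^0$.

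The step I expect to be the main obstacle is the rigorous fiberwise reduction in the first paragraph, i.e. passing from ``for all $\varphi\in V_{div}$'' to ``for a.e. $y_1$, for all $\varphi\in V_{y_1, div}$'': the constraint space $V_{y_1, div}$ varies with $y_1$ through $h(y_1,\cdot)$, so product test functions are unavailable and one must work with a direct-integral description of $V_{div}$ together with a fiberwise density lemma. The companion derivation of the macroscopic incompressibility and the uniform positivity $\bar a_{y_1}(w^1_{y_1},w^1_{y_1})\ge c>0$ are more routine but still need to be carried out with care.
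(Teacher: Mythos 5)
Your outline is correct and reaches the same endpoint as the paper, but the route diverges in three places, and in each case the paper's choice is the one that sidesteps the difficulty you flag.

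For the decomposition of $v^0$, the paper does not perform a fiberwise reduction of the global variational identity at all. Instead it argues as in Proposition~\ref{prop5.1}: define the candidate $\hat v^0 := \frac{\partial p^0}{\partial y_1}\,w^1_{y_1} + U_0\,w^2_{y_1} + w^3_{t,y_1}$ slice by slice via Lax--Milgram, check that $(t,y_1)\mapsto \hat v^0(t,y_1,\cdot,\cdot)$ is continuous (so the candidate genuinely lies in $L^2(0,T;V_{div})$), integrate the slice identities in $y_1$ to verify $\hat v^0$ solves $(P_{v^0,p^0})$ globally, and then conclude $v^0=\hat v^0$ because the uniform coercivity (\ref{coer_1})--(\ref{coer_2}) of $\bar a_{y_1}$ already gives uniqueness of the global problem (test the difference of two solutions against itself). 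The direct-integral machinery and fiberwise density lemma you identified as the main obstacle are thus never needed; the logical direction runs ``explicit candidate $\Rightarrow$ global uniqueness'' rather than ``global identity $\Rightarrow$ a.e.\ slice identity.''

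For the macroscopic incompressibility, the paper uses the much simpler test function $\psi^{\e}(z)=\psi(z_1)$ with $\psi\in\mathcal{C}^\infty_{\sharp}(0,L)$: testing $\mathrm{div}_z v^{\e}=0$ against $\psi^{\e}\theta$, integrating by parts (the boundary integrals drop by the non-penetration and periodicity conditions), and rescaling gives $0=-\int_0^T\int_{\Omega} v_1^{\e}\,\psi'(y_1)\,h(y_1,y_1/\e)\,\theta\,dy\,dt$, whose two-scale limit is exactly $\int_0^T\int_{\Omega\times(0,1)} v_1^0\,\psi'(y_1)\,h\,\theta\,d\eta_1\,dy\,dt=0$. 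There is no need for the elaborate test functions of type (\ref{eq:rajout1}) nor for invoking (\ref{eq:sec4-3})--(\ref{eq:sec4-4}) at this stage.

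Finally, for the uniform lower bound $\bar a_{y_1}(w^1_{y_1},w^1_{y_1})\ge c>0$, your argument (positivity at each fixed $y_1$ plus continuity of $y_1\mapsto\bar a_{y_1}(w^1_{y_1},w^1_{y_1})$ plus compactness) requires you to prove continuity of $y_1\mapsto w^1_{y_1}$, which is delicate since the constraint space $V_{y_1,div}$ itself varies with $y_1$. The paper avoids this entirely with the explicit competitor $\varphi_{y_1}(y_2,\eta_1)=\bigl(\tfrac{-y_2+y_2^2}{h},\ \tfrac{\partial h}{\partial\eta_1}\tfrac{y_2^2(y_2-1)}{h}\bigr)\in V_{y_1,div}$, for which $\bar a_{y_1}(w^1_{y_1},\varphi_{y_1})=-\int_Y h\,\varphi_{y_1,1}=\tfrac16$ by direct computation; Cauchy--Schwarz then gives $\bar a_{y_1}(w^1_{y_1},w^1_{y_1})\ge\tfrac{1}{36\,\bar a_{y_1}(\varphi_{y_1},\varphi_{y_1})}$, and since $\varphi_{y_1}$ is given by an explicit formula in $h$, the map $y_1\mapsto\bar a_{y_1}(\varphi_{y_1},\varphi_{y_1})$ is manifestly continuous and hence bounded on $[0,L]$, yielding the uniform lower bound without ever touching the regularity of $w^1_{y_1}$. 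Your derivation of the Reynolds equation from the flux constraint, the identification $\int_Y h\,(w^j_{y_1})_1 = -\bar a_{y_1}(w^1_{y_1},w^j_{y_1})$, and the Lax--Milgram conclusion on $H^1_{\sharp}(0,L)_{|\br}$ are, however, exactly as in the paper.
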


\begin{proof} The first part of the result is obtained by using the same kind of
arguments as in Proposition \ref{prop5.1}.

Let $\theta \in {\mathcal D} (0,T)$, $\psi \in {\mathcal C}^{\infty}_{\sharp}
\bigl( [0,L] \bigr)$ and $\psi^{\e} (z) = \psi(z_1)$ for all $z = (z_1, z_2) \in
\Omega^{\e}$. Recalling that ${\rm div}_z v^{\e} =0$ in $\Omega^{\e}$ and using
the boundary conditions
(\ref{eqn:er2.8a})-(\ref{eqn:er2.11})-(\ref{eqn:er2.11b}) we get
\begin{eqnarray*}
&& \displaystyle 0 = \frac{1}{\e} \int_0^T \int_{\Omega^{\e}} \left(
\frac{\partial v^{\e}_1 }{\partial z_1}  (t,z)+ \frac{\partial v^{\e}_2
}{\partial z_2} (t,z) \right) \psi^{\e} (z) \theta (t) \, dz dt \\
&& \displaystyle 0 = -  \frac{1}{\e} \int_0^T \int_{\Omega^{\e}} v_1^{\e} (t,z)
\frac{\partial \psi^{\e} }{\partial z_1}  (z) \theta (t) \, dz dt  = -
\int_0^T \int_{\Omega} v_1^{\e} (t,y) (b_{\e} \cdot \nabla \psi^{\e} ) (y)
h^{\e} (y) \theta (t) \, dy dt \\
&& \displaystyle = - \int_0^T \int_{\Omega} v_1^{\e} (t,y)  \frac{\partial \psi
}{\partial y_1} (y_1) h \left( y_1, \frac{y_1}{\e} \right) \theta (t) \, dy dt.
\end{eqnarray*}
By passing to the limit as $\e$ tends to zero we get
\begin{eqnarray*}
0 = \int_0^T \int_{\Omega \times (0,1)} v_1^0 (t, y, \eta_1)  \frac{\partial
\psi }{\partial y_1} (y_1) h (y_1, \eta_1)  \theta (t) \, d \eta_1 dy dt.
\end{eqnarray*}
It follows that
 \begin{eqnarray*}
&& \displaystyle  \int_0^L \frac{\partial p^0}{\partial y_1} \frac{\partial
\psi}{\partial y_1}\left(  \int_Y w_{y_1, 1}^1 h (y_1, \cdot) \, d \eta_1 d y_2
\right)  \, dy_1 +  \int_0^L U_0 (t) \frac{\partial \psi}{\partial y_1} \left(
\int_Y w_{y_1, 1}^2 h (y_1, \cdot) \, d \eta_1 d y_2 \right)  \, dy_1 \\
&& \displaystyle + \int_0^L  \frac{\partial \psi}{\partial y_1} \left(  \int_Y
w_{t, y_1, 1}^3 h (y_1, \cdot) \, d \eta_1 d y_2 \right)  \, dy_1
=0 \quad {\rm a.e.} \ t \in [0,T].
\end{eqnarray*}
But
\begin{eqnarray*}
&& \displaystyle  \int_Y w_{y_1, 1}^1 h (y_1, \cdot) \, d \eta_1 d y_2  = - \bar
a_{y_1} (w_{y_1}^1, w_{y_1}^1) , \\
&& \displaystyle \int_Y w_{y_1, 1}^2 h (y_1, \cdot) \, d \eta_1 d y_2  = - \bar
a_{y_1} (w_{y_1}^1, w_{y_1}^2), \\
&& \displaystyle \int_Y w_{t,y_1, 1}^3 h (y_1, \cdot) \, d \eta_1 d y_2  = -
\bar a_{y_1} (w_{y_1}^1, w_{y_1}^3)
\end{eqnarray*}
and by density of $ {\mathcal C}^{\infty}_{\sharp} \bigl( [0,L] \bigr)$ in
$H^1_{\sharp} (0,L)$ we get
\begin{eqnarray*}
&& \displaystyle \int_0^L \frac{\partial p^0}{\partial y_1} \frac{\partial
\psi}{\partial y_1} \bar a (w^1_{y_1} , w^1_{y_1} ) \, dy_1 = - \int_0^L U_0 (t)
\frac{\partial \psi}{\partial y_1} \bar a \bigl(w^1_{y_1} ,  w^2_{ y_1} \bigr)
\, dy_1 \\
&& \displaystyle -  \int_0^L  \frac{\partial \psi}{\partial y_1}   \bar a
\bigl(w^1_{y_1} ,  w^3_{ t, y_1} \bigr) \, dy_1 \quad \forall \psi \in
H^1_{\sharp} (0,L) ,  \ {\rm a.e.} \ t \in [0,T].
\end{eqnarray*}
We can check that this Reynolds problem admits a unique solution in
$H^1_{\sharp} (0,L)_{| \br}$. Indeed, let
$\displaystyle   \varphi_{y_1} (y_2, \eta_1) = \left( \frac{ -y_2 +
y_2^2}{h(y_1, \eta_1)}, \frac{\partial h}{\partial \eta_1} (y_1, \eta_1) \frac{
y_2^2 (y_2 -1)}{h(y_1, \eta_1)} \right)$ for all $(y_2, \eta_1) \in Y$, for all
$y_1 \in [0,L]$. Then we obtain $ \varphi_{y_1} \in  V_{y_1, div}$ and
\begin{eqnarray*}
\bar a_{y_1} (w^1_{y_1},  \varphi_{y_1}) = - \int_Y h (y_1, \eta_1)
\varphi_{y_1, 1} (y_2 , \eta_1) \, d \eta_1 d y_2 = \frac{1}{6}.
\end{eqnarray*}
Since $\bar a_{y_1}$ defines an inner product on $V_{y_1, div}$,
we have
\begin{eqnarray*}
\frac{1}{6 } = \bar a_{y_1} (w^1_{y_1},  \varphi_{y_1}) \le \bar a_{y_1}
(w^1_{y_1}, w^1_{y_1} )^{1/2} \bar a_{y_1} ( \varphi_{y_1},
\varphi_{y_1})^{1/2} .
\end{eqnarray*}
But the mapping $y_1 \mapsto  \bar a_{y_1} ( \varphi_{y_1},  \varphi_{y_1})$ is
continuous on $[0,L]$ and does not vanish since $ \varphi_{y_1} \not \equiv 0$.
It follows that   there exists $\alpha >0$ such that $ \bar a_{y_1} (
\varphi_{y_1},  \varphi_{y_1}) \ge \alpha$  for all $y_1 \in [0,L]$ and $ \bar a
(w^1_{y_1} , w^1_{y_1} ) \ge \frac{1}{ 36 \alpha}$ for all $y_1 \in [0,L]$.

We may observe also that the mapping \newline
$\displaystyle \psi \mapsto - \int_0^L U_0 (t) \frac{\partial \psi}{\partial
y_1} \bar a \bigl(w^1_{y_1} ,  w^2_{ y_1} \bigr) \, dy_1 -   \int_0^L
\frac{\partial \psi}{\partial y_1}   \bar a \bigl(w^1_{y_1} ,  w^3_{ t, y_1}
\bigr) \, dy_1 $ is linear and continuous on $H^1_{\sharp} (0,L)$ for  every $t
\in [0,T]$ and the mapping
$\displaystyle (p, \psi) \mapsto \int_0^L \frac{\partial p}{\partial y_1}
\frac{\partial \psi}{\partial y_1} \bar a (w^1_{y_1} , w^1_{y_1} ) \, dy_1$ is
bilinear, symmetric, continuous  and coercive on $H^1_{\sharp} (0,L)_{|\br}$. We
can apply Lax-Milgram's theorem to conclude the proof of Proposition \ref{prop5.2}.
\end{proof}

As a consequence of the uniqueness of $p^0$, we can state the next result:

\begin{theorem}
The whole sequences $( \e^2 p^{\e})_{\e >0}$, $( v^{\e})_{\e >0}$ and
$(Z^{\e})_{\e >0}$ satisfy the following convergence:
\begin{eqnarray*}
&& \e p^{\e} \toH p^0 \\
&& v^{\e} \toH v^0 \\
&& Z^{\e} \toH Z^0.
\end{eqnarray*}
\end{theorem}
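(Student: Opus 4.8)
The plan is to derive the convergence of the \emph{whole} sequences from the uniqueness of the homogenized limit, by means of a Urysohn-type subsequence argument. The point is that Propositions \ref{prop5.1} and \ref{prop5.2} show that the objects obtained as two-scale limits are in fact uniquely determined: for almost every $t$, $p^0(t,\cdot)$ is the unique element of $H^1_{\sharp}(0,L)_{|\br}$ solving the homogenized Reynolds equation together with the normalization $\int_0^L p^0 \bigl( \int_0^1 h(\cdot,\eta_1)\,d\eta_1\bigr)\,dy_1=0$; then $v^0$ is given by the explicit formula of Proposition \ref{prop5.2} and $Z^0$ by that of Proposition \ref{prop5.1}. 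Hence any two-scale cluster point of $(v^{\e})$, $(Z^{\e})$, $(\e^2 p^{\e})$ must coincide with $(v^0, Z^0, p^0)$, and it only remains to turn this into convergence of the full sequences.

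First I would argue by contradiction. Suppose, say, that $(\e^2 p^{\e})_{\e>0}$ does not two-scale converge to $p^0$. By the very definition of two-scale convergence this means that there exist $\theta\in\mathcal D(0,T)$, $\varphi\in\mathcal D\bigl(\Omega;\mathcal C^\infty_\sharp(Y)\bigr)$, a number $\rho>0$ and a sequence $\e_k\to 0$ with
\begin{eqnarray*}
\left| \int_0^T\!\int_{\Omega} \e_k^2 p^{\e_k} \varphi\Bigl(y,\frac{y}{\e_k}\Bigr)\theta\,dy\,dt - \int_0^T\!\int_{\Omega\times Y} p^0 \varphi\,\theta\,d\eta\,dy\,dt \right| \ge \rho .
\end{eqnarray*}
The a priori bounds of Propositions \ref{pro1} and \ref{prop2} hold for every value of $\e$, hence in particular along $(\e_k)$, so the two-scale compactness theorem of Section \ref{twoscaleconv} furnishes a further subsequence, still written $(\e_k)$, such that $v^{\e_k}\toH \tilde v^0$, $Z^{\e_k}\toH \tilde Z^0$ and $\e_k^2 p^{\e_k}\toH \tilde p^0$, together with all the structural properties recorded in Propositions \ref{prop4.1}, \ref{prop4.2}, \ref{prop4.3}.

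Next I would re-examine the passage to the limit carried out in Section \ref{limitprobl}: it uses only the estimates of Section \ref{uniformEst} and the two-scale convergences of the sequences, so it applies verbatim along $(\e_k)$ and shows that $(\tilde v^0, \tilde Z^0, \tilde p^0)$ solves the limit problem of Theorem \ref{limit_pb}, hence the decoupled auxiliary problems $(P_{v^0,p^0})$ and $(P_{Z^0})$. By the uniqueness statements of Propositions \ref{prop5.1} and \ref{prop5.2} this forces $\tilde p^0=p^0$, $\tilde v^0=v^0$, $\tilde Z^0=Z^0$. In particular $\e_k^2 p^{\e_k}\toH p^0$ along the extracted subsequence, contradicting the displayed inequality. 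Thus $\e^2 p^{\e}\toH p^0$, and the same reasoning applied to $(v^{\e})$ and $(Z^{\e})$ yields the two remaining convergences.

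The only genuine subtlety I expect is that two-scale convergence is not, a priori, convergence in a metric topology, so the ``every subsequence has a sub-subsequence converging to the same limit'' principle cannot be invoked as a black box. I would handle this through the definition itself: by the uniform $L^2$, respectively $H^{-1}$, bounds it suffices to test against a fixed countable dense family of pairs $(\theta,\varphi)$, which reduces the statement to a diagonal argument for sequences of real numbers. Apart from that, the theorem is essentially a corollary of what precedes; no new estimate is needed, and the real substance lies in the uniform coercivity of the forms $a_{y_1}$ and $\bar a_{y_1}$ that underlies the uniqueness in Propositions \ref{prop5.1} and \ref{prop5.2}.
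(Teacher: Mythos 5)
Your argument is correct; the paper gives no proof beyond the remark that the theorem is ``a consequence of the uniqueness of $p^0$'', and your subsequence-extraction reasoning, combined with the uniqueness statements of Propositions \ref{prop5.1} and \ref{prop5.2}, is exactly the standard way to turn that uniqueness into convergence of the whole sequences. You also correctly flag, and correctly resolve, the one genuine subtlety here, namely that two-scale convergence is not a priori the convergence of a metric topology, so the ``every subsequence has a sub-subsequence with the same limit'' principle must be applied test pair by test pair as a statement about sequences of real numbers rather than invoked as a black box.
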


\section{Concluding remarks}\label{open-Pbs}

A possible generalization of this study consists in considering a domain $\Om^{\e}$ where both the upper and lower boundary are oscillating. More precisely, let us assume  that
\begin{eqnarray*}
 \Om^{\e} =\bigl\{ (z_{1} , z_{2}) \in \br^{2}; \quad 0 < z_{1}< L,  \quad
-\e \beta(z_{1}) h^{\e}(z_{1})< z_{2} < \e h^{\e}(z_{1}) \bigr\}
\end{eqnarray*}
where $\beta$ belongs to $ {\cal C}^{\infty}([0 , L];  \br^+)$ and is $L$-periodic in $z_1$ (with $\beta \equiv 0$ we recognize the case presented in the previous sections). Now we should denote by $\Gamma_0^{\e}$ the lower boundary of $\Om^{\e}$ and we can choose the functions ${\cal U}$ and ${\cal W}$ (see Lemma \ref{lUW}) such that ${\cal U}$ and ${\cal W}$ belong to ${\cal C}^{\infty}(\br, \br)$ with ${\cal U}(\sigma) = {\cal W} (\sigma) = 1$ for all $\sigma \le 0$ and ${\rm Supp} ({\cal U}) \subset (- \infty, h_m)$, ${\rm Supp} ({\cal W}) \subset (- \infty, h_m)$. Then we define again
$$
 U^{\e}(t ,z_{2})= {\cal U}^{\e}(z_{2})U_{0}(t)
 = {\cal U}({z_{2}\over \e})U_{0}(t) ,
\quad
 W^{\e}(t ,z_{2})= {\cal W}^{\e}(z_{2})W_{0}(t)
= {\cal W}({z_{2}\over \e})W_{0}(t)$$
and we get the same variational problem $(P^{\e})$. It follows that the existence and uniqueness result given at Theorem
\ref{th2.1} is still valid.
Furthermore, we can use the same scalings (see (\ref{2.2a}) and (\ref{2.2b})) which transforms the domain $\Om^{\e}$ into
\begin{eqnarray*}
\Om= \bigl\{ (y_1, y_2) \in \br^2;  \quad 0 < y_1 <L, \quad - \beta (y_1) < y_2 <1 \bigr\}
\end{eqnarray*}
and by reproducing the same computations, we obtain the same a priori estimates as in Proposition \ref{pro1} and Proposition \ref{prop2}.

Finally we may apply once again the two-scale convergence technique to pass to the limit as $\e$ tends to zero. We obtain the same convergence properties for the velocity and the micro-rotaion field as in Proposition \ref{prop4.1} and Proposition \ref{prop4.2} with $\Gamma_0 = \bigl\{ \bigl(y_1, - \beta(y_1) \bigr); \ 0< y_1 < L \bigr\}$. For the convergence of the pressure, we follow the same arguments as in Proposition \ref{prop4.3} with a natural modification of the test-function $\varphi^{\e}$ introduced at formula (\ref{eq:rajout1}) which may be chosen now  as
\begin{eqnarray*}
\varphi^{\e} (y) = \frac{\varphi(y_1)}{h\left(y_1, \frac{y_1}{\e} \right)} \left( \bigl(y_2 + \beta(y_1) \bigr) e_1 + \e y_2  \bigl(y_2 + \beta(y_1) \bigr) \left(  \frac{\partial h}{\partial y_1} \left(y_1, \frac{y_1}{\e} \right) + \frac{1}{\e}  \frac{\partial h}{\partial \eta_1} \left(y_1, \frac{y_1}{\e} \right) \right) e_2 \right)
\end{eqnarray*}
for all $(y_1, y_2) \in \Om$, which leads to
\begin{eqnarray*}
 \left|  \int_0^T \int_0^L p^{0} (t,y_1)  \frac{\partial }{\partial y_1}\bigl( \frac{1}{2} (1 + \beta)^2 \varphi \bigr) (y_1)   \theta (t) \, dy dt \right|
\le   C \| \varphi \theta \|_{L^2 ((0,T) \times (0,L))}.
\end{eqnarray*}
Then we may conclude by considering any $\phi \in {\mathcal C}^{\infty}_{\sharp} (0,L) $ and letting $\varphi = \frac{2 \phi}{(1+ \beta)^2}$.

Hence the limit problem remains the same as in Theorem \ref{limit_pb}: $Z^0$ and $v^0$ can be decomposed by using the same auxiliary problems and $p^0$ is the unique solution of the same Reynolds equation, with obvious adaptations in the definition of $a_{y_1}$ and $\bar a_{y_1}$, i.e. for all $y_1 \in [0,L]$:
 \begin{eqnarray*}
 a_{y_1} (w, \psi) = \int_{ - \beta(y_1)}^1 \int_0^1 \Bigl( && h(y_1, \eta_1) ( \bar b \cdot \nabla w)( y_2, \eta_1) (\bar b \cdot \nabla \psi)(y_2,  \eta_1)  \\
&& + \frac{1}{h(y_1, \eta_1)} \frac{\partial w}{\partial y_2} (y_2, \eta_1)  \frac{\partial \psi}{\partial y_2} (y_2, \eta_1) \Bigr) \, d \eta_1 dy_2
 \end{eqnarray*}
 for all $\displaystyle (w, \psi) \in {\mathcal F}_{y_1} = \Bigl\{ v \in L^2 \bigl( (- \beta(y_1), 1 ); H^1_{\sharp} (0,1) \bigr) ; \frac{\partial v}{\partial y_2} \in L^2 \bigl( (- \beta(y_1), 1 ) \times (0,1) \bigr)  \Bigr\}$ and
$\displaystyle \bar a_{y_1} (w, \varphi) = (\nu + \nu_r) \sum_{i=1}^2 a_{y_1} (w_i, \varphi_i)$ for all $(w, \varphi) \in V_{y_1, div}^2$ where $V_{y_1, div}$ is the closure of $\tilde V_{y_1, div}$ in ${\mathcal F}_{y_1}^2$ and
\begin{eqnarray*}
\tilde V_{y_1, div} =  \Bigl\{&& \varphi \in \bigl( {\mathcal C}^{\infty} \bigl([- \beta(y_1),1]; {\mathcal C}^{\infty}_{\sharp} (0,1) \bigr) \bigr)^2; \ \varphi (- \beta(y_1), \cdot)= 0 \  \hbox{\rm on $(0,1)$,} \\
&&  - \varphi_1 (1, \cdot) \frac{\partial h}{\partial y_1} (y_1, \cdot) + \varphi_2 (1, \cdot) =0 \ \hbox{\rm on $ (0,1)$,} \\
&&
h(y_1, \cdot)  \frac{\partial \varphi_1}{\partial \eta_1} - y_2 \frac{\partial h  }{\partial \eta_1} (y_1, \cdot) \frac{\partial \varphi_1}{\partial y_2} + \frac{\partial \varphi_2}{\partial y_2} =0 \ \hbox{\rm in $(- \beta(y_1), 1) \times (0,1)$}
\Bigr\}.
\end{eqnarray*}

\bigskip
\noindent{\bf Acknowledgements:}
We would like to thank the anonymous referees for their constructive comments.

\newpage
 
\end{document}